\newtheorem{theorem}{Theorem}[section]
\newtheorem{definition}[theorem]{Definition}
\newtheorem{corollary}[theorem]{Corollary}
\newtheorem{lemma}[theorem]{Lemma}
\newtheorem{remark}[theorem]{Remark}
\newtheorem{example}[theorem]{Example}
\newtheorem{fact}[theorem]{Fact}
\newproof{proof}{Proof}
\newcommand{\lv}{\ensuremath{\mathbf{v}}}
\newcommand{\y}{\ensuremath{\mathbf{y}}}
\newcommand{\x}{\ensuremath{\mathbf{x}}}
\newcommand{\yp}{\ensuremath{\mathbf{y'}}}
\newcommand{\z}{\ensuremath{\mathbf{z}}}
\newcommand{\set}{\ensuremath{\mathbf{Set}}}
\newcommand{\sset}{\ensuremath{\mathbf{Simpl.Set}}}
\newcommand{\bsset}{\ensuremath{\mathbf{Bisimpl.Set}}}
\newcommand{\cat}{\ensuremath{\mathbf{Cat}}}
\newcommand{\ner}{\ensuremath{\mathrm{N}}}
\newcommand{\gner}{\ensuremath{\Delta}}
\newcommand{\class}{\ensuremath{\mathrm{B}}}
\newcommand{\diag}{\ensuremath{\mathrm{diag}}}
\newcommand{\m}{\ensuremath{\mathcal{M}}}
\newcommand{\f}{\ensuremath{\mathcal{F}}}
\newcommand{\dcat}{\bf{2}\ensuremath{\mathbf{Cat}}}
\newcommand{\Lfunc}{\ensuremath{\mathrm{laxFunc}}}
\newcommand{\A}{\ensuremath{\mathcal{A}}}
\newcommand{\C}{\ensuremath{\mathcal{C}}}
\newcommand{\B}{\ensuremath{\mathcal{B}}}
\def\comma#1#2{#1/\hspace{-3pt}/ #2}
 \journal{Journal of Pure and Applied Algebra}
\begin{document}

\begin{frontmatter}

\title{Homotopy fibre sequences induced by $2$-functors}

\author{A. M. Cegarra\corref{cor}}
\ead{acegarra@ugr.es}
\address{Departamento de \'{A}lgebra,  Universidad de
Granada, 18071 Granada, Spain}

\begin{abstract}
This paper contains some contributions to the study of the relationship between 2-categories and the homotopy types of their classifying spaces. Mainly, generalizations are given of both  Quillen's Theorem B and Thomason's Homotopy Colimit Theorem to 2-functors.
\end{abstract}

\begin{keyword} 2-category \sep monoidal category\sep classifying space \sep homotopy fibre \sep homotopy colimit\sep Grothendieck construction\sep loop space
\MSC 18D05\sep 55P15\sep 18F25
\end{keyword}
\end{frontmatter}

\section{Introduction and summary}

The construction of nerves and classifying spaces of categorical structures has become an essential part of the machinery in algebraic topology and algebraic K-theory. This paper provides a contribution to the study of classifying spaces of $2$-categories and, in particular, of monoidal categories.

To help motivate the reader, and to establish the setting for our discussions,  let us briefly recall that it was Grothendieck \cite{groth, grothendieck} who first associated a simplicial set $\ner\C$ to a small category $\C$, calling it its \emph{nerve}. The set of $n$-simplices
$$
\ner_n\C = \bigsqcup_{(x_0,\ldots,x_n)\in \mbox{\scriptsize Ob}\C^{n+1}}\hspace{-0.3cm}
\C(x_1,x_0)\times\C(x_2,x_1)\times\cdots\times\C(x_n,x_{n-1})
$$ consists of length $n$ sequences of composable morphisms in $\C$.   Milnor's realization \cite{milnor} of its nerve is the {\em classifying space} of the category, $\class\C=|\ner\C|$. We can stress the historical relevance of this construction by noting  that, in Quillen's development of higher algebraic K-theory \cite{quillen}, K-groups are defined by taking homotopy groups of classifying spaces of certain categories. When a monoid (or group) ${\mathcal M}$ is regarded as a category with only
one object, then $\class{\mathcal M}$ is its classifying space in the traditional sense. Therefore, many weak homotopy types
thus occur, since every path-connected space has the weak homotopy type  of the classifying space of a monoid, as it was proved by McDuff \cite[Theorem 1]{macd} (see also \cite[Theorem 3.5]{fiedorowicz}). Moreover, any CW-complex is homotopy equivalent to the classifying space of a small category,
as Illusie showed in \cite[Theorem 3.3]{illusie}: The category of simplices $\int_\Delta {\hspace{-3pt}}S$, of a simplicial set $S$, has as objects pairs
$(p,x)$ where $p\geq 0$ and $x$ is a $p$-simplex of $S$; and arrow $\xi\!:(p,x)\rightarrow (q,y)$ is an arrow
$\xi\!:[p]\to [q]$ in $\Delta$ with the property $x=\xi^*y$. Then there exists a homotopy equivalence $|S|\simeq
\class\!\int_\Delta {\hspace{-2pt}}S$ between the geometric realization of $S$ and the classifying space of $\int_\Delta{\hspace{-3pt}}S$ (this result is,
in fact, a very particular case of the homotopy colimit theorem of Thomason \cite{thomason}). Then, by \cite[Theorem 4]{milnor}, if $X$ is any CW-complex and we
take $S=SX$, the total singular complex of $X$,  it follows that  $X\simeq |SX|\simeq \class\!\int_\Delta{\hspace{-3pt}} SX$.

  In \cite{segal68}, Segal extended Milnor's realization process to simplicial
topological spaces. He observed that, if \C\ is a topological category, then \ner\C\ is, in a
natural way, a simplicial space and he defines the \emph{classifying space} \class\C\ of a
topological category \C\ as the realization of the simplicial space $\ner\C$. This general
construction given by Segal provides, for instance, the definition of classifying spaces of
2-categories. A 2-category $\C$ is a category endowed with categorical hom-sets $\C(x,y)$, for any
pair of objects, in such a way that composition is a functor $\C(y,z)\times \C(x,y)\rightarrow \C(x,z)$ satisfying the usual identities.
 By replacing the hom-categories $\C(x,y)$ by their classifying spaces $\class\C(x,y)$, one
obtains a topological category with a discrete space of objects, of which Segal's realization is the
classifying space \class\C\ of the 2-category.  Thus, $$\class \C:=|\class\ner\C|\cong |\diag\ner\ner\C|,$$where $\diag\ner\ner\C$ is the diagonal of the bisimplicial set obtained from the simplicial category $\ner \C$ by replacing each category $\C(x,y)$ by its nerve (i.e., the {\em double nerve} of the 2-category).  For instance, the classifying space of a (strict) monoidal category  $(\m,\otimes)$ is  the classifying space of the one object $2$-category, with category of endomorphisms $\m$, that it defines.

The category $\dcat$ of small 2-categories and 2-functors has a Quillen model structure \cite{tonks},  such that the functor $\C\mapsto\class\C$ induces an equivalence between the corresponding homotopy category of 2-categories and the ordinary homotopy category of CW-complexes. By this correspondence, 2-groupoids correspond to spaces whose homotopy groups $\pi_n$ are trivial for $n > 2$ \cite{moerdijk-svensson}. From this point of view the use of 2-categories and their classifying spaces in homotopy theory goes back to Whitehead \cite{whi} and Mac Lane-Whitehead \cite{mac} since 2-groupoids with only one object (= strict 2-groups, in the
terminology of Baez \cite{baez}) are the same as crossed modules (this observation is attributed to Verdier  in \cite{b-s}). But, beyond homotopy theory,  the use of classifying spaces of $2$-categories has also shown its relevance in several other mathematical contexts such as in algebraic K-theory \cite{hinich}, conformal field theory \cite{til2}, or in the study of geometric structures on low-dimensional manifolds \cite{til}.

This paper contributes to clarifying several relationships between 2-categories and the homotopy types of their classifying spaces. Here, we deal with questions such as, when do 2-functors induce homotopy equivalences or homotopy cartesian squares of classifying spaces? In fact, research work aiming to answer that question was started by the author with Bullejos in \cite{b-c}, a
paper to which this one is a sequel,  and where  a generalization to 2-functors of Quillen's Theorem A \cite{quillen}  was shown as a primary outcome. That paper relied heavily on the fact that realizations of Duskin-Street's geometric nerves \cite{duskin,street} yield classifying spaces for 2-categories \cite[Theorem 1] {b-c}. In this paper we mainly state and prove an extension of the relevant and well-known Quillen's Theorem B \cite{quillen} to 2-categories by showing, under reasonable necessary conditions, a category-theoretical interpretation of the homotopy fibres of the realization map $\class F:\class \B\to\class \C$, of a 2-functor $F:\B\to\C$.
More precisely, in Theorem \ref{B} we replace the concept of homotopy fibre category of a functor by Gray's notion \cite[\S 3.1]{gray} of homotopy fibre 2-category $\comma{z}F$ of a 2-functor $F:\B\to\C$ at an object $z$ of $\C$ (the double bar notation $\comma{}$ avoids confusion with the homotopy fibre category of the underlying functor, see \S 3 for details). Then,   we prove the existence of induced homotopy fibre sequences $$\class (\comma{z}F)\to \class \B\to\class\C,$$  whenever the induced maps $\class(\comma{z_0}F)\to \class(\comma{z_1}F)$, for the different morphisms (1-cells) $z_1\to z_0$  of $\C$, are homotopy equivalences. This says  that the name ``homotopy fibre 2-category" was well-chosen, since the classifying spaces of these homotopy fibres are the homotopy fibres of the map of classifying spaces. Clearly if the homotopy fibre 2-categories are contractible then $\class F$ is a homotopy equivalence, and Theorem A for 2-functors in \cite{b-c} becomes an immediate corollary.

When both $\B$ and $\C$ are small categories, regarded as discrete 2-categories, one obtains the ordinary Theorem B, and the methods used in the proof of Theorem \ref{B} we give follow along similar lines to those used by Goerss and Jardine in \cite[\S IV, 5.2]{g-j} for proving Quillen's theorem, though the generalization to 2-categories is highly nontrivial.

Our result is applied to the homotopy theory of lax functors ${\!\f:\C^{o}\rightsquigarrow \dcat}$, where $\C$ is any 2-category, and hence to acting monoidal categories. The application is carried out through an enriched Grothendieck construction $\int_\C\!\f$, which is actually a special case of the one considered by Bakovi\'{c} in \cite[\S 4]{bak} and, for the case where $\C$ is a category, it is a special case of the  ones given by Tamaki in \cite[\S 3]{tama} and by Carrasco, Cegarra and Garz\'on in \cite[\S 3]{c-c-g} (see also the construction $\int_\Gamma$ in \cite[Theorem 3.3]{c-g-o}, where $\Gamma$ is a group). For any  lax functor ${\!\f:\C^{o}\rightsquigarrow \dcat}$, $\int_\C\!\f$ is a 2-category that assembles all 2-categories $\!\f_z$, $z\in \text{Ob}\C$, and, in Theorem \ref{hc2},  when every map $\class \f_{z_0}\to\class\f_{z_1}$ induced by a morphism $z_1\to z_0$ of $\C$ is a homotopy equivalence, we prove the existence of homotopy fibre sequences $$\class\f_z\to\xymatrix{\class\!\int_\C\!\f}\to \class\C,$$  for the different objects $z$ of $\C$.

The usual  Grothendieck construction \cite{grothendieck} on a lax functor ${\!\f:\C^{o}\rightsquigarrow \cat}$, for $\C$ a category, underlies
our 2-categorical construction $\int_\C\!\f$. Recall also  that the well-known Homotopy Colimit Theorem by Thomason \cite{thomason} establishes that  the Grothendieck construction on a diagram of categories is actually a categorical model for the homotopy type of the homotopy colimit of the diagram
of categories. The notion of homotopy colimit has been well generalized in the literature to 2-functors $\f\!:\C^{o}\to
\cat$ (see \cite[2.2]{hinich}, for example), where $\C$ is any 2-category.  Our Theorem \ref{hct} states two generalizations of Thomason's theorem both to
2-diagrams of categories, that is to 2-functors  $\!\f:\C^{o}\to \cat$, with $\C$ a 2-category,  and  to diagrams of 2-categories, that is to functors  $\!\f:\C^{o}\to
\dcat$, with $\C$ a category, by showing the existence of respective homotopy equivalences
$$\xymatrix{\class \,\mbox{hocolim}_\C\f\simeq
\class\!\int_\C\!\f.}$$

The plan of this paper is, briefly, as follows. After this historical and motivating introductory Section 1, the paper is organized in three sections. Section 2 contains a minimal amount of notation as well as various auxiliary statements on classifying spaces of 2-categories. In Section 3 we state and prove the main result of the paper, Theorem \ref{B}, with the generalization to 2-functors of Quillen's Theorem B which, in Section 4, is applied to the study of 2-diagrams of 2-categories by means of the higher Grothendieck construction.

\section{Preliminaries and notations}\label{SP}

For the general background on 2-categories used in this paper, we refer to \cite{borceux}, \cite{maclane} and \cite{street2}, and on
simplicial sets to \cite{may67}, \cite{quillen} and, mainly, to \cite{g-j}.

The {\em simplicial category} is denoted by $\Delta$. It has as objects the ordered sets $[n]=\{0,\ldots,n\}$, $n\geq
0$, and as arrows the (weakly) monotone maps $\alpha:[n]\rightarrow[m]$. This category is generated by the directed
graph with edges all maps
$$
\xymatrix@C=16pt{ [n+1]\ar[r]^-{\textstyle \sigma_i} & [n] & [n-1] \ar[l]_{\textstyle \delta_i}},\qquad 0\leq i\leq n,
$$
where
$$
\sigma_i(j)=\left\{
\begin{array}{lll}
  j & \text{ if } &j\leq i \\
  j-1 &\text{ if } &j> i
\end{array}
\right. \qquad\mbox{and}\qquad \delta_i(j)=\left\{
\begin{array}{lll}
  j &\text{ if }& j < i \\
  j+1 &\text{ if }& j\geq i \; .
\end{array}
\right.
$$

Segal's {\em geometric realization} functor \cite{segal68},  for
simplicial spaces $S\!:\Delta^{\!o}\to  \mathbf{Top}$,  is denoted
by $S\mapsto |S|$. For instance, by regarding a set as a discrete
space, the (Milnor's, \cite{milnor}) geometric realization of a
simplicial set $S:\Delta^{\!o}\to \set$ is $|S|$.

The category of small categories is $\cat$, and $\ner:\cat \to \sset$ denotes the {\em nerve} functor to the category
of simplicial sets.
  Any ordered set $[n]$ is
considered as a category with only one morphism  $j\rightarrow i$ when $0\leq i\leq j\leq n$, and the nerve $$\ner \C:
\Delta^{\!o}\to\set,$$ of a category \C, is the simplicial set with $\ner_n\C=\mbox{Func}([n],\C)$, the set of all functors
${\x:[n]\to \C}$, which usually we write by $$\xymatrix{\x= \big(x_i&\ar[l]_(0.6){\textstyle x_{i,j}}x_j\big)_{\hspace{-5pt}\scriptsize
\begin{array}{l}0\leq i\leq j\leq n.\end{array}}}$$ In other words, for $n>0$, an $n$-simplex is
  a string of composable arrows
in $\C$
$$\xymatrix@R=24pt{x_0&\ar[l]_(0.4){\textstyle x_{0,1}}x_1&\ar[l]_(0.4){\textstyle x_{1,2}}x_2& \ar[l]\cdots&\ar[l] x_{n-1}&\ar[l]_(0.4){\textstyle x_{n-1,n}}x_n}
$$
and $\ner_0\C=\mbox{Ob}\C$, the set of objects of the category.
By applying realization to $\ner\C$, we obtain the
\emph{classifying space} $\class\C$ of the category \C, that is $$\class\C=|\ner\C|.$$

We also use the notion of classifying space $\class\C$ for a simplicial category ${\C:\Delta^{\!o}\to \cat}$. That is, the realization of the bisimplicial set $\ner\C\!:\!([p],[q])\mapsto\! \ner_q\C_p$   obtained by composing $\C$ with the above functor nerve.
Recall that, when a bisimplicial set
$S\!:\Delta^{\!o}\!\times\!\Delta^{\!o}\to \set$ is regarded as a
simplicial object in the simplicial set category
 and one takes geometric realizations, then one obtains a simplicial space
 $\Delta^{\!o}\to {\bf Top}$, $[p]\mapsto |S_{p,*}|$, whose
Segal realization  is taken to be $|S|$, the geometric realization of
 $S$. As there are natural homeomorphisms \cite[Lemma in p. 86]{quillen}
$$
|[p]\mapsto |S_{p,*}||\cong |\diag S|\cong |[q]\mapsto |S_{*,q}||,
$$ where $\diag S$ is the simplicial set obtained by composing $S$ with the diagonal functor $\Delta\to\Delta\times\Delta$, $[n]\mapsto ([n],[n])$, one usually takes $$|S|=|\diag S|.$$

The following relevant fact is used several times along the
paper (see \cite[Chapter XII, 4.2 and 4.3]{bousfield-kan} or
\cite[IV, Proposition 1.7]{g-j}, for example):

\begin{fact}\label{fact} If
$f\!\!:S\rightarrow S^\prime$ is a bisimplicial map such that the
simplicial maps $ f_{p,*}\!\!: S_{p,*}\rightarrow S^\prime_{p,*}$
(respect. $ f_{*,q}\!\!: S_{*,q}\rightarrow S^\prime_{*,q}$) are
weak homotopy equivalences for all $p$ (respect. $q$), then so is
the map $\diag f\!\!: \!\diag S \rightarrow \!\diag S^\prime$
\end{fact}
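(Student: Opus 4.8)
The plan is to realize $S$ and $S'$ in the second simplicial variable, so that they become \emph{proper} simplicial spaces, and then to combine the classical homotopy invariance of the (Segal) realization of proper simplicial spaces with the natural homeomorphisms recalled above. Concretely, I would regard the bisimplicial set $S$ as the simplicial object $[p]\mapsto S_{p,*}$ in \sset\ and apply Milnor's realization degreewise, obtaining a simplicial space $|S_{\bullet,*}|\colon\Delta^{\!o}\to\mathbf{Top}$, $[p]\mapsto|S_{p,*}|$, whose Segal realization is $|S|$, homeomorphic to $|\diag S|$ by the remarks above; likewise for $S'$, the map induced by $f$ on these simplicial spaces realizing to $\diag f$ up to those homeomorphisms. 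Since realization of simplicial sets sends weak homotopy equivalences to weak homotopy equivalences, the hypothesis on the $f_{p,*}$ says exactly that $|f_{p,*}|\colon|S_{p,*}|\to|S'_{p,*}|$ is a weak equivalence for each $p$; so it remains to show that a degreewise weak equivalence between two such simplicial spaces induces a weak equivalence on Segal realizations.

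For this I would first record \emph{properness}: in a bisimplicial set the degeneracy operators $S_{p,*}\to S_{p+1,*}$ are (split) monomorphisms of simplicial sets, and Milnor realization carries monomorphisms of simplicial sets to closed cofibrations (relative CW-inclusions), so $[p]\mapsto|S_{p,*}|$ and $[p]\mapsto|S'_{p,*}|$ have cofibrant degeneracies. Hence in the skeletal filtration $\emptyset=F_{-1}\subseteq F_0\subseteq F_1\subseteq\cdots$ of $|S|$ (with $|S|=\bigcup_nF_n$) each $F_n$ is obtained from $F_{n-1}$ by a pushout, one leg of which is the cofibration $\partial\Delta^n\times|S_{n,*}|\cup\Delta^n\times|L_nS|\to\Delta^n\times|S_{n,*}|$, where $L_nS\subseteq S_{n,*}$ is the $n$-th latching object (the sub-simplicial-set of simplices degenerate in the first variable); and similarly for $S'$, compatibly with $f$. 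One then argues by induction on $n$ that $F_n\to F_n'$ is a weak equivalence: the case $n=0$ is the hypothesis for $p=0$, and for the inductive step the relevant square is a homotopy pushout whose corners other than $F_n$ all map by weak equivalences --- using the inductive hypothesis for $F_{n-1}$, the preservation of finite products by realization, and the fact that $L_n(-)$ turns a degreewise weak equivalence of proper simplicial spaces into a weak equivalence (a routine Reedy induction, the latching objects being built from the $|S_{k,*}|$, $k<n$, by colimits along cofibrations) --- so the gluing lemma for homotopy pushouts gives the claim for $F_n$. Finally $|S|=\operatorname{colim}_nF_n$ and $|S'|=\operatorname{colim}_nF_n'$ are sequential colimits along closed cofibrations, and since homotopy groups commute with such colimits, $|S|\to|S'|$ --- that is, $\diag f$ via the homeomorphisms above --- is a weak homotopy equivalence. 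The assertion with $p$ and $q$ interchanged follows by the symmetric argument.

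The one genuine obstacle here is the bookkeeping in the inductive step: verifying that each stage of the skeletal filtration is attached along a cofibration and that the latching objects preserve weak equivalences. Both are manifestations of the properness of the simplicial spaces in play, which in our situation holds automatically because everything is the Milnor realization of a (mono)map of simplicial sets; the details are entirely classical, and are carried out in \cite[Chapter XII, 4.2 and 4.3]{bousfield-kan} and \cite[IV, Proposition 1.7]{g-j}. Alternatively, one may identify $\diag S$ with the homotopy colimit $\operatorname{hocolim}_{[p]\in\Delta^{\!o}}S_{p,*}$ and invoke the homotopy invariance of homotopy colimits, but this merely repackages the same content.
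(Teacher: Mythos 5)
The paper does not actually prove this statement: it is quoted as a known Fact, with the proof delegated to \cite[Chapter XII, 4.2 and 4.3]{bousfield-kan} and \cite[IV, Proposition 1.7]{g-j}, so there is no internal argument to compare against. Judged on its own, your sketch is correct and is essentially the classical argument contained in those references, in its Bousfield--Kan/Segal form: realize in one simplicial direction to get a proper simplicial space (properness holding because horizontal degeneracies of a bisimplicial set are monomorphisms and Milnor realization turns monomorphisms into closed cofibrations), filter the Segal realization by skeleta, prove the claim on each skeleton by induction using the gluing lemma for pushouts along cofibrations, and pass to the sequential colimit along closed cofibrations. The one place where the real technical work sits is exactly the step you flag and defer, namely that the latching objects $L_nS\subseteq S_{n,*}$ again map by weak equivalences; this is the content of the properness/Reedy induction in \cite{bousfield-kan}, so deferring it is legitimate but you should be aware it is the heart of the matter rather than mere bookkeeping. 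For comparison, the proof in \cite[IV, Proposition 1.7]{g-j} runs the same induction entirely simplicially (filtering the bisimplicial set itself in the horizontal degree and applying the gluing lemma to diagonals), which avoids any discussion of topological cofibrations and of the homeomorphism $|S|\cong|\diag S|$; your topological route and their simplicial route are two packagings of the same induction, and your closing remark that one could instead identify $\diag S$ with $\operatorname{hocolim}_{[p]\in\Delta^{\!o}}S_{p,*}$ and invoke homotopy invariance of homotopy colimits is also a standard, correct alternative.
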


A 2-\emph{category} \C\ is just a category enriched in the category of small categories. Then, \C\ is a
category in which the hom-set between any two objects $x,y\in\C$ is the set of objects of a category $\C(x,y)$, whose
arrows are called {\em deformations} (or 2-cells) and are denoted by ${\alpha:u\Rightarrow v}$ and depicted as $$\xymatrix @C=8pt {x  \ar@/^0.7pc/[rr]^{\textstyle u} \ar@/_0.7pc/[rr]_{\textstyle v} & \Downarrow\!\alpha &y .}$$ Composition in each category $\C(x,y),$ usually
referred to as the vertical composition of deformations,  is denoted by juxtaposition. Moreover, the horizontal
composition is a functor $\C(x,y)\times\C(y,z)\overset{\circ\ }\rightarrow \C(x,z)$ that is associative and has
identities $1_x\in\C(x,x)$.

Several times throughout the paper, categories $\C$ are considered as 2-categories in which all deformations are identities, that is, in which each category $\C(x,y)$ is discrete. For any 2-category \C, $\C^{o}$ is the 2-category with the same objects as \C\ but whose hom-categories are $\C^{o}(x,y)=\C(y,x)$.

The {\em nerve of a $2$-category} $\C$ is the simplicial category \begin{equation}\label{nerve}\ner\C: \Delta^{\!o}\to \cat
\end{equation} whose category of $n$-simplices is
$$
\ner_n\C = \bigsqcup_{(x_0,\ldots,x_n)\in \mbox{\scriptsize Ob}\C^{n+1}}\hspace{-0.3cm}
\C(x_1,x_0)\times\C(x_2,x_1)\times\cdots\times\C(x_n,x_{n-1}),
$$
where a typical arrow $\chi$ is a string of deformations in \C
$$
\chi=\xymatrix @C=6pt{x_0  & {\ \Downarrow\! \alpha_{0,1}} & x_1 \ar@/^1pc/[ll]^{\textstyle x_{0,1}}
\ar@/_1pc/[ll]_-{\textstyle x_{0,1}^\prime} & {\ \Downarrow\! \alpha_{1,2}} & x_2  \ar@/^1pc/[ll]^{\textstyle x_{1,2}}
\ar@/_1pc/[ll]_-{\textstyle x_{1,2}^\prime}&\hspace{-8pt}\cdots&\hspace{-10pt}
 x_{n-1}
  & {\ \Downarrow\!\alpha_{n-1, n}} & x_n
\ar@/^1pc/[ll]^{\textstyle x_{n-1,n}} \ar@/_1pc/[ll]_-{\textstyle x_{n-1,n}^\prime }},
$$
and $\ner_0\C=\text{Ob}\C$, as a discrete category. The face and degeneracy functors are defined in a standard way by
using the horizontal composition in $\C$, that is,  $d_0(\chi)= (\alpha_{1,2},\dots,\alpha_{n-1,n})$, $d_1(\chi)=(\alpha_{0,1}\circ
\alpha_{1,2},\dots,\alpha_{n-1,n})$, and so on.

The \emph{classifying space} \class\C\ of the 2-category \C\ is, by definition, the realization of the simplicial
category $\ner\C$, that is, $$\class \C =|\diag\ner\ner\C|,$$ where $\ner\ner\C:([p],[q])\mapsto \ner_q\ner_p\C$ is the
bisimplicial set \emph{double nerve} of the 2-category.

\begin{example}\label{e1}{\em  A strict monoidal category $(\m,\otimes)$ \cite{maclane}, that is, an internal monoid in $\cat$, can be viewed as a
$2$-category with only one object, say $1$,  the objects $x$ of \m\ as morphisms $x:1\rightarrow 1$ and the morphisms of \m\
as deformations. It is the horizontal composition of morphisms and deformations given by the  tensor
${\otimes:\m\times\m\rightarrow\m}$ and the vertical composition of deformations given by the composition of arrows in
$\m$. The identity at the object is the unit object of the monoidal category.

Then,  $\ner(\m,\otimes)$, the nerve of the monoidal category as in (\ref{nerve}), is exactly the simplicial
category that $(\m,\otimes)$ defines by the  reduced bar construction; that is,
$$\ner (\m,\otimes)=\overline{W}(\m,\otimes):\Delta^{\!o}\to \cat,\hspace{0.5cm}[n]\mapsto \m^n.$$
Therefore, the ordinary classifying space $\class(\m,\otimes)$, of the monoidal category, is just the classifying space of the
one object $2$-category it defines. }\qed
\end{example}

A 2-\emph{functor} $F:\B\rightarrow\C$ between 2-categories is an enriched functor and so it takes objects,
morphisms and deformations in \B\ to objects, morphisms and deformations in $\C$ respectively, in such a way that all
the 2-category structure is preserved. It is clear how any 2-functor between 2-categories $F:\B\to\C$ induces a simplicial functor between the corresponding nerves $\ner F:\ner\B \to \ner\C$, whence a cellular map on classifying spaces
$$\class F:\class \B\to \class \C.$$
Furthermore, as we shall explain below, any {\em normal lax functor},  written $$F:\B\rightsquigarrow  \C,$$ also
induces a map $\class F:\class \B\to \class \C$, well defined up to homotopy equivalence. Let us recall that a normal lax functor between $2$-categories, ${F:\B\rightsquigarrow \C}$, consists of three
maps that assign:
\begin{itemize}
\item to each object $x\in \B$, an object $Fx\in\C$;
\item  to each pair of objects $x,y$ of $\B$, a functor $F:\B(x,y)\to\C(Fx,Fy)$;
\item to each pair of composable arrows
$x \xrightarrow{v}y\xrightarrow{u}z$ in $\B$, a deformation  in $\C$ $${F_{u,v}:Fu\circ Fv\Rightarrow
F(u\circ v)}.$$
\end{itemize}
These data are required to satisfy the normalization condition:
$F{1_x}=1_{Fx}$, $F_{u,1}=1_{Fu}=F_{1,u}$; the naturality  condition: deformations $F_{u,v}$ are natural in $(u,v)\in \B(y,z)\times \B(x,y)$; and the coherence (or cocycle) condition that, for any
triple of composable arrows   $x\xrightarrow{w}y\xrightarrow{v}z\xrightarrow{u}t$ in $\B$, the square of
deformations below commutes.
$$
\xymatrix@C=50pt{Fu\circ Fv\circ Fw  \ar@{=>}[d]_{\textstyle F_{{u,v}}\circ
1} \ar@{=>}[r]^{\textstyle 1\circ F_{{v,w}}} &
  Fu\circ F(v\circ w) \ar@{=>}[d]^{\textstyle F_{{u,v\circ w}}} \\
  F(u\circ v)\circ Fw \ar@{=>}[r]^-{\textstyle F_{{u\circ v,w}} }& F(u\circ v\circ w) . }
$$
A normal lax functor in which all constraints $F_{u,v}$ are identities is precisely a 2-functor.

 The {\em geometric nerve} of a 2-category  $\C$, \cite{duskin,street}, is defined to be the simplicial
set \begin{equation}\label{gner}\Delta\C=\Lfunc(-,\C):\Delta^{\!o}\to \set, \end{equation} with $\Delta_n\C=\Lfunc([n],\C)$
the set of normal lax functors $\x: [n]\rightsquigarrow \C$.

 Thus,
for a 2-category  $\C$, the vertices of its geometric nerve $\gner \C$ are  the objects $x_0$ of \C, the 1-simplices
are the morphisms $\xymatrix{x_0&\ar[l]_(0.45){\textstyle x_{0,1}}x_1}$  and the 2-simplices are triangles
$$
\xymatrix@R=20pt@C=20pt{\ar@{}[drr]|(.6)*+{\Downarrow \!{ x_{_{0,1,2}}}\ }  & x_1 \ar[dl]_{\textstyle x_{0,1}}            \\
 x_0  & &  \ar[ll]^{\textstyle x_{0,2}}    x_2\ar[ul]_{\textstyle x_{1,2}} }
$$
with $x_{0,1,2}:x_{0,1}\circ x_{1,2}\Rightarrow x_{0,2}$ a deformation in \C. For $n\geq 3$, an $n$-simplex of $\Delta
\C$ consists of a family
$$
\x=\{x_i,x_{i,j},x_{i,j,k}\}_{0\leq i\leq j\leq k\leq n}
$$
with $x_i$ objects, $x_{i,j}:x_j\rightarrow x_i$ morphisms and $x_{i,j,k}:x_{i,j}\circ x_{j,k}\Rightarrow x_{i,k}$
deformations in \C, which is geometrically represented by a diagram in \C\ with the shape of the 2-skeleton of an
orientated standard $n$-simplex whose faces are triangles
$$
\xymatrix@R=20pt@C=20pt{\ar@{}[drr]|(.6)*+{\Downarrow\! x_{_{i,j,k}}\ }
                & x_j \ar[dl]_{\textstyle x_{i,j}}             \\
 x_i  & &     x_k\,.   \ar[ul]_{\textstyle x_{j,k}} \ar[ll]^{\textstyle x_{i,k}}     }
$$
These data are required to satisfy the condition that each tetrahedron
$$
\begin{array}{ccc}
\xymatrix {
 & x_l \ar[dl]_{\textstyle x_{i,l}}  \ar[dd]|<<<<<<{\textstyle x_{j,l} }\ar[dr]^{\textstyle x_{k,l}}& \\
x_i   & & x_k \ar[dl]^{\textstyle x_{j,k}} \ar[ll]|<<<<<<{\; \textstyle x_{i,k}\;}\\
 & x_j \ar[ul]^{\textstyle
x_{i,j}}}
 & \hspace{1cm}&
\xymatrix@R=2pt{\\  x_{i,j,k}:x_{i,j}\circ x_{j,k}\Rightarrow x_{i,k}\\
x_{i,j,l}:x_{i,j}\circ x_{j,l}\Rightarrow x_{i,l}\\
x_{i,k,l}:x_{i,k}\circ x_{k,l}\Rightarrow x_{i,l}\\
x_{j,k,l}:x_{j,k}\circ x_{k,l}\Rightarrow x_{j,l}}
\end{array}
$$
for $0\leq i\leq j\leq k\leq l \leq n$ is commutative in the sense that the following square of deformations
$$
\xymatrix@C=50pt{x_{i,j}\circ x_{j,k}\circ x_{k,l}  \ar@{=>}[d]_-{\textstyle x_{{i,j,k}}\circ 1}
\ar@{=>}[r]^(0.55){\textstyle 1\circ x_{{j,k,l}}} &
  x_{i,j}\circ x_{j,l} \ar@{=>}[d]^{\textstyle x_{{i,j,l}}} \\
  x_{i,k}\circ x_{k,l} \ar@{=>}[r]^-{\textstyle x_{{i,k,l}}} & x_{i,l}  }
$$
commutes in the category $\C(x_l,x_i)$, and, moreover, the following normalization equations hold:
$$x_{i,i}=1_{x_i}, \ \ x_{i,j,j}=1_{x_{i,j}}=x_{i,i,j}.$$

 Note that, if $\C$ is  a category, regarded as a 2-category with all deformations identities, then
$\gner\C=\ner\C$. As a main result in \cite{b-c}, the following is proved:

\begin{theorem}\label{teo1} For any $2$-category \C\
there is a natural homotopy equivalence
$$
 \class \C\simeq |\gner\C|.
$$
\end{theorem}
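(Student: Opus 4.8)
The plan is to compare the two simplicial sets whose realisations are at stake: $\gner\C$ on the one hand, and the diagonal $\diag\ner\ner\C$ of the double nerve on the other, since $\class\C=|\diag\ner\ner\C|$ by definition. These two simplicial sets are genuinely different — already in dimension $2$ their simplices organise quite differently and there is no natural bijection between them — so the required equivalence cannot come from an isomorphism but must be produced as a natural zig-zag of simplicial maps together with homotopy-theoretic input. The natural intermediary is the codiagonal (total complex) $\overline{W}(\ner\ner\C)$ of the double nerve: for every bisimplicial set $S$ there is a classical natural homotopy equivalence $|\diag S|\simeq|\overline{W}S|$, valid with no fibrancy hypothesis (see \cite{bousfield-kan}). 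So the real task reduces to constructing a natural weak homotopy equivalence between $\gner\C$ and $\overline{W}(\ner\ner\C)$.

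To build it, I would first write down the comparison explicitly. An $n$-simplex of $\overline{W}(\ner\ner\C)$ is a compatible ``staircase'' $(u_0,\dots,u_n)$ with $u_p\in\ner_{n-p}(\ner_p\C)$ — that is, $u_p$ records $p$ composable $1$-cells of $\C$ and, sitting over them, a length-$(n-p)$ vertical chain of deformations between parallel $p$-tuples of $1$-cells — the successive pieces being glued along face identities. From a normal lax functor $\mathbf{x}=\{x_i,x_{i,j},x_{i,j,k}\}\colon[n]\rightsquigarrow\C$ one produces such a staircase by letting the ``leading'' $1$-cells $x_{0,1},\dots,x_{p-1,p}$ supply the $p$-tuple in $u_p$, and letting the coherence $2$-cells $x_{i,j,k}$ with indices in the ``trailing'' block $\{p,\dots,n\}$, suitably whiskered along the leading $1$-cells, assemble into the chain of deformations in $u_p$; the cocycle and normalisation conditions on $\mathbf{x}$ are precisely what makes the resulting staircase well glued. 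This assignment is simplicial and natural in $\C$. (Should the bookkeeping make a single map awkward, the same data yield a natural zig-zag through one further auxiliary bisimplicial set.)

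It then remains to prove this comparison is a weak equivalence, and here the tool is Fact~\ref{fact}. One filters $\overline{W}(\ner\ner\C)$ and $\gner\C$ along the ``number of composable $1$-cells'' direction — exhibiting each as a total complex of a bisimplicial set for which that direction is explicit — and checks that in each fixed degree of that direction the comparison restricts to a weak equivalence. After unwinding, this reduces to the statement that, over a fixed tuple of composable $1$-cells of $\C$, the simplicial set of ``trailing deformation data'' on the $\overline{W}$-side maps by a weak equivalence to the corresponding simplicial set of coherence data on the $\gner$-side; the fibres of this map are nerves of coslices of the hom-categories $\C(x,y)$, which have initial objects and so are contractible, whence Quillen's Theorem~A \cite{quillen} applies. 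Combining this with $|\diag S|\simeq|\overline{W}S|$ and $\class\C=|\diag\ner\ner\C|$ yields the natural homotopy equivalence $\class\C\simeq|\gner\C|$; naturality is automatic since every step is functorial in $\C$, so that for a $2$-functor $F$ the square relating $\class F$ and $|\gner F|$ homotopy-commutes.

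The main obstacle is exactly this last step. Verifying the fibrewise contractibility, and the underlying combinatorial matching of the cocycle conditions of normal lax functors with the gluing conditions of the total complex, is delicate — it is here that the strictness of $2$-categories, as opposed to general bicategories, is genuinely convenient — and it is the technical heart of \cite[Theorem~1]{b-c}.
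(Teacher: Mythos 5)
The first thing to be aware of is that the paper does not prove Theorem~\ref{teo1} at all: it is quoted from \cite{b-c}. So your argument has to stand on its own, and its central construction fails. Write $S=\ner\ner\C$ with $S_{p,q}=\ner_q\ner_p\C$, as you do. Unwinding the matching condition $d_0^vt_1=d_{2}^ht_2$, a $2$-simplex of $\overline{W}S$ is a composable pair $(u_1\colon x_1\to x_0,\ u_2\colon x_2\to x_1)$ together with a single deformation joining $u_1$ to some $1$-cell $a\colon x_1\to x_0$, and its faces are $d_0=u_2$, $d_2=a$, $d_1=u_1\circ u_2$ (whichever orientation conventions one adopts, the long edge of every $2$-simplex of $\overline{W}(\ner\ner\C)$ is a strict composite of its $d_0$-edge with another $1$-cell, and the $2$-cell datum lies in a hom-category between adjacent vertices, here $\C(x_1,x_0)$). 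A geometric $2$-simplex instead carries an arbitrary edge $x_{0,2}\colon x_2\to x_0$ and a deformation $x_{0,1,2}\colon x_{0,1}\circ x_{1,2}\Rightarrow x_{0,2}$ in $\C(x_2,x_0)$; no whiskering of $x_{0,1,2}$ can produce a deformation in $\C(x_1,x_0)$, and $x_{0,2}$ need not factor through $x_{1,2}$. Since any comparison natural in $\C$ is forced, by naturality with respect to the $2$-functors $[1]\to\C$ classifying $1$-cells, to be the identity on $1$-skeleta, a natural simplicial map $\gner\C\to\overline{W}(\ner\ner\C)$ would have to satisfy $x_{0,2}=u_1\circ x_{1,2}$ for some $u_1$, which is false in general (already for a monoidal category with a morphism $a\otimes b\to c$ where $c$ is not of the form $a'\otimes b$). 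The same obstruction appears in the transposed orientation, so the ``staircase'' map you describe does not exist; your parenthetical escape route (``a natural zig-zag through one further auxiliary bisimplicial set'') is exactly the missing construction, not bookkeeping. Note the contrast with the place where the paper really does use this technique, Theorem~\ref{hct}(i): there the two codiagonals are literally isomorphic, which is what makes that argument work.

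The remaining steps also do not carry the weight you place on them. The statement that $\diag S\simeq\overline{W}S$ without fibrancy hypotheses is not in \cite{bousfield-kan}; it is the Zisman-map result of \cite{cegarra1,cegarra2}, quoted in Section~4 of the paper (a misattribution only, since the fact is true). More seriously, your weak-equivalence argument rests on ``contractible fibres plus Quillen's Theorem~A'': Theorem~A applies to functors, and for a bare simplicial map contractibility of fibres proves nothing unless one has a fibration or a levelwise comparison in the sense of Fact~\ref{fact}; moreover $\gner\C$ is not exhibited as the diagonal or codiagonal of any bisimplicial set graded by ``number of composable $1$-cells'' --- an $n$-simplex contains the $1$-cells $x_{i,j}$ for all $i<j$ simultaneously --- so the filtration you invoke is not available on that side. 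Finally, you concede that the delicate verification ``is the technical heart of \cite[Theorem~1]{b-c}''; but Theorem~\ref{teo1} \emph{is} \cite[Theorem 1]{b-c}, so deferring to it is circular. As it stands, the proposal is an outline whose key comparison map does not exist and whose key homotopical input is assumed rather than proved.
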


\begin{example} \label{e2}{\em Let $(\m,\otimes)$ be a strict monoidal category, regarded as a $2$-category with
 only one object as in Example \ref{e1}.
Then, by Theorem \ref{teo1}, the geometric nerve $\gner(\m,\otimes)$ realizes the classifying space of the monoidal
category, that is, $\class(\m,\otimes)\simeq|\gner(\m,\otimes)|$.
This geometric nerve $\gner(\m,\otimes)$ is a $3$-coskeletal reduced simplicial set whose simplices have the following
simplified interpretation: the $1$-simplices  are the objects of \m, the $2$-simplices are morphisms of $\m$ of the
form
$$
\xymatrix@C=12pt{ x_{0,1}\otimes x_{1,2}\ar[rr]^(0.58){\textstyle x_{0,1,2}} & &  x_{0,2} }
$$
and the $3$-simplices are commutative squares in $\m$ of the form
$$
\xymatrix{x_{0,1}\otimes x_{1,2}\otimes x_{2,3} \ar[rr]^-{\textstyle x_{0,1,2}\otimes 1} \ar[d]_{\textstyle 1\otimes x_{1,2,3}} && x_{0,2}\otimes
x_{2,3}\ar[d]^{\textstyle x_{0,2,3}} \\
x_{0,1}\otimes x_{1,3}\ar[rr]^-{\textstyle x_{0,1,3}} && x_{0,3}.}
$$}
\end{example}

\vspace{0.2cm}
The geometric nerve construction on 2-categories $\C\mapsto\gner\C$ is clearly functorial on normal lax functors
between 2-categories. Therefore, Theorem \ref{teo1} gives the following:

\begin{corollary}
Any normal lax functor between $2$-categories $F:\B\rightsquigarrow\C$ induces a continuous map $\class F:\class\B\to\class
\C$, well defined up to homotopy equivalence.\end{corollary}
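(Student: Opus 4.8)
The plan is to route the construction through the geometric nerve functor $\gner$, which --- in contrast to the double nerve --- is genuinely functorial on the category of $2$-categories and normal lax functors, and then to carry the resulting map of realizations back to classifying spaces by means of the homotopy equivalence of Theorem~\ref{teo1}.

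First I would verify the functoriality of $\gner$ on normal lax functors, which is where the (routine) content lies. Given $F:\B\rightsquigarrow\C$ and a normal lax functor $\x:[n]\rightsquigarrow\B$ --- equivalently, since $[n]$ carries only identity deformations, a family $\{x_i,x_{i,j},x_{i,j,k}\}_{0\le i\le j\le k\le n}$ as in the description of the simplices of $\gner\B$ --- I would define the composite $F\circ\x:[n]\rightsquigarrow\C$ to have objects $Fx_i$, morphisms $Fx_{i,j}$, and structure deformations
$$
(F\circ\x)_{i,j,k}\ :\ Fx_{i,j}\circ Fx_{j,k}\ \overset{\textstyle F_{x_{i,j},x_{j,k}}}{\Longrightarrow}\ F(x_{i,j}\circ x_{j,k})\ \overset{\textstyle F(x_{i,j,k})}{\Longrightarrow}\ Fx_{i,k},
$$
the vertical composite of a constraint of $F$ followed by $F$ applied to a constraint of $\x$. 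The check that $F\circ\x$ is again a normal lax functor is a diagram chase: the cocycle identity on a tetrahedron $(i,j,k,l)$ follows by tiling the relevant square of deformations with the cocycle square of $\x$ (to which the functor $F:\B(x_l,x_i)\to\C(Fx_l,Fx_i)$ is applied), the coherence square of $F$ on the composable triple $x_{i,j},x_{j,k},x_{k,l}$, and naturality squares of the deformations $F_{u,v}$; the normalization equations are immediate from $F1_x=1_{Fx}$, $F_{u,1}=F_{1,u}=\mathrm{id}$ and the normalization of $\x$. Since composition of normal lax functors is associative (the same sort of coherence argument), the assignment $\x\mapsto F\circ\x$ commutes with the simplicial operators $\alpha^*$ (which act by precomposition with the $2$-functors $\alpha:[m]\to[n]$), so it yields a simplicial map $\gner F:\gner\B\to\gner\C$ with $\gner(F'\circ F)=\gner F'\circ\gner F$ and $\gner(1_\C)=1_{\gner\C}$. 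This is exactly the functoriality asserted just above the statement.

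Finally I would apply Segal's realization to obtain a cellular map $|\gner F|:|\gner\B|\to|\gner\C|$, and then invoke Theorem~\ref{teo1}: it provides homotopy equivalences relating $\class\B$ with $|\gner\B|$ and $\class\C$ with $|\gner\C|$, and conjugating $|\gner F|$ by these (choosing a homotopy inverse on one side) produces
$$
\class F\ :\ \class\B\longrightarrow\class\C .
$$
Because homotopy inverses are unique up to homotopy this map is well defined up to homotopy, and functoriality of $\gner$ makes the construction compatible with composition up to homotopy; moreover, when $F$ is an honest $2$-functor, the naturality of the equivalences of Theorem~\ref{teo1} with respect to $2$-functors shows that this $\class F$ is homotopic to the cellular map induced by $\ner F$, so there is no clash of notation. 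I expect the only genuinely delicate point to be precisely this last compatibility: the assignment $\C\mapsto\class\C=|\diag\ner\ner\C|$ is \emph{not} functorial for normal lax functors, so one cannot simply transport the map along a commuting square; the remedy, as above, is to perform the construction on the $\gner$ side --- where the functor exists honestly --- and to absorb the residual ambiguity into the phrase ``well defined up to homotopy''. The pasting verifications in the second step are standard coherence arguments and I would not write them out in full.
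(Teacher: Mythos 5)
Your proposal is correct and follows essentially the same route as the paper: the paper simply observes that the geometric nerve $\gner$ is functorial on normal lax functors and then invokes Theorem~\ref{teo1} to transport $|\gner F|$ to a map $\class\B\to\class\C$ well defined up to homotopy, which is exactly your argument (you merely spell out the coherence checks the paper leaves implicit).
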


The following fact in Lemma \ref{trans} will be needed later. Recall that a {\em lax trasformation}
$\alpha:F\Rightarrow G$, where $F, G:\B\rightsquigarrow\C$ are normal lax functors between  2-categories,
  consists of a family of morphisms $\alpha_x:Fx\to Gx$ in $\C$, one for each object $x$ of $\B$, and deformations
  $\alpha_{u}: \alpha_y\circ Fu \Rightarrow Gu\circ \alpha_x$,
$$
\xymatrix@R=6pt@C=8pt{&Fy\ar[rd]^{\textstyle \alpha_y}&\\ Fx\ar[ru]^{\textstyle Fu}\ar[rd]_{\textstyle \alpha_x}&\hspace{7pt}\Downarrow\!\alpha_u&Gy,
\\&Gx\ar[ru]_{\textstyle Gu}}
$$
which are natural in  $u\in \B(x,y)$, subject to the usual two axioms:  for each object $x $ of $\B$,
$\alpha_{1_x}=1_{\alpha_x}$, and for each pair of composable morphisms $x\overset{v}\to y \overset{u}\to z$ in $\B$,
the diagram below commutes.
$$
\xymatrix@C=-16pt@R=16pt{&&\alpha_z\circ F(u\circ v)\ar@{=>}[rrd]^-{\textstyle \alpha_{u\circ v}}&&\\\alpha_z\circ Fu\circ
Fv\ar@{=>}[rru]^{\textstyle 1\circ F_{u,v}} \ar@{=>}[rd]_(0.4){\textstyle \alpha_u\circ 1}&&&&G(u\circ v)\circ \alpha_x\\&Gu\circ\alpha_y\circ
Fv\ar@{=>}[rr]^(0.46){\textstyle 1\circ \alpha_v}&&Gu\circ G v\circ \alpha_x \ar@{=>}[ru]_(0.6){\textstyle G_{u,v}\circ 1}&}
$$

 Replacing the structure
deformations above by $ \alpha_{u}:Gu\circ \alpha_x\Rightarrow \alpha_y\circ Fu$ we have the notion of {\em  oplax
transformation} $\alpha:F\Rightarrow F'$. If $F,F':\B\to \C$ are $2$-functors, then a lax (or oplax) transformation $\alpha:F\Rightarrow F'$
whose components $\alpha_u$ at the different morphisms $u$ of $\B$ are all identities is called a {\em $2$-natural transformation}. Thus, a $2$-natural transformation
is a \cat-enriched natural transformation, that is, a natural transformation between the underlying ordinary functors that also respects $2$-cells.

\begin{lemma}\label{trans}
If two normal lax functors between $2$-categories, $F,G:\B\rightsquigarrow\C$, are related by a lax or an oplax
transformation, $F\Rightarrow G$, then the induced maps on classifying spaces,
$\class F, \class G:\class\B\to\class\C$, are homotopic.
\end{lemma}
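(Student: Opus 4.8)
The plan is to reduce the statement to a simplicial fact about the geometric nerves, using Theorem \ref{teo1} to replace $\class\B$ and $\class\C$ by the realizations $|\gner\B|$ and $|\gner\C|$ and the maps $\class F$, $\class G$ by the realizations of the induced simplicial maps $\gner F,\gner G\colon\gner\B\to\gner\C$. So it suffices to produce a simplicial homotopy (or at least a chain of simplicial homotopies, or a homotopy after realization) between $\gner F$ and $\gner G$. The standard device for this is to encode the lax transformation $\alpha\colon F\Rightarrow G$ as a single normal lax functor out of a ``cylinder'' $2$-category $\B\times[1]$ — more precisely, the $2$-category $\mathbf{2}\times\B$ whose underlying category is the product of the arrow category $[1]=\{0\to 1\}$ with $\B$, with hom-categories $(\mathbf{2}\times\B)((i,x),(j,y))=\B(x,y)$ when $i\le j$ and empty otherwise, and whose $2$-cells and compositions are inherited from $\B$. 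A lax transformation $F\Rightarrow G$ is exactly the data of a normal lax functor $H\colon\mathbf{2}\times\B\rightsquigarrow\C$ restricting to $F$ on $\{0\}\times\B$ and to $G$ on $\{1\}\times\B$ (the deformations $\alpha_u$ and the two axioms for a lax transformation are precisely the structure constraints and coherence/normalization conditions for $H$); the oplax case is handled by the variant with $i\ge j$, or equivalently by replacing $\B$ by $\B^{o}$.

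Next I would apply the geometric nerve functor, which is functorial on normal lax functors, to get a simplicial map $\gner H\colon\gner(\mathbf{2}\times\B)\to\gner\C$. The inclusions $\{0\},\{1\}\hookrightarrow[1]$ and the two projections give $2$-functors $\B\rightrightarrows\mathbf{2}\times\B\to\B$ and hence simplicial maps $\gner\B\rightrightarrows\gner(\mathbf{2}\times\B)\to\gner\B$. The key point is that $\gner(\mathbf{2}\times\B)$ plays the role of a cylinder on $\gner\B$: I claim there is a natural map $\gner\B\times\Delta[1]\to\gner(\mathbf{2}\times\B)$ (built from the canonical identification $\Delta[1]=\ner[1]=\gner[1]$ together with the monoidal-type map $\gner\B\times\gner[1]\to\gner(\B\times[1])$ assembling a normal lax functor $[n]\rightsquigarrow\B$ and a functor $[n]\to[1]$ into a normal lax functor $[n]\rightsquigarrow\mathbf{2}\times\B$), compatible with the two end-inclusions. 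Composing with $\gner H$ then yields a simplicial map $\gner\B\times\Delta[1]\to\gner\C$ whose two ends are $\gner F$ and $\gner G$; realizing and using $|\Delta[1]|\cong[0,1]$ gives the desired homotopy $\class F\simeq\class G$. (Alternatively, one can avoid constructing the map out of the cylinder explicitly and instead observe directly that, for each $n$, precomposition with the two functors $[n]\to[1]$ that are constant exhibits $\gner(\mathbf{2}\times\B)$ as containing a simplicial homotopy between $\gner F$ and $\gner G$ in the sense of \cite{g-j}, $\S$I.6 — a simplicial homotopy being exactly a map from $\gner\B\times\Delta[1]$ — so no separate verification is needed beyond naturality.)

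The main obstacle I anticipate is the bookkeeping in identifying a lax transformation with a normal lax functor on the cylinder, and in checking that the resulting $H$ is genuinely \emph{normal} and satisfies the cocycle condition: one has to match the lax-transformation axioms (the triangle for $\alpha_{1_x}=1_{\alpha_x}$ and the hexagon relating $\alpha_{u\circ v}$ to $G_{u,v}$, $F_{u,v}$, $\alpha_u$, $\alpha_v$) against the coherence square for $H$ on composable triples of morphisms in $\mathbf{2}\times\B$, of which the only nontrivial new cases are triples that cross the level from $0$ to $1$ exactly once. This is a finite, if somewhat intricate, diagram chase in the hom-categories $\C(Fx,Gy)$; once it is done, the oplax case follows formally by duality ($\gner\C^{o}$ versus $\gner\C$ realize the same space, or one works with the opposite cylinder), and the homotopy statement is immediate from functoriality of $\gner$ and Theorem \ref{teo1}.
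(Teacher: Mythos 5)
Your proposal follows essentially the same route as the paper: encode the (op)lax transformation as a normal lax functor $H$ on the cylinder $2$-category $\B\times[1]$ restricting to $F$ and $G$ on the two ends, observe that $\gner(\B\times[1])\cong\gner\B\times\Delta[1]$ so that $\gner H$ is a simplicial homotopy between $\gner F$ and $\gner G$, and conclude via Theorem \ref{teo1}; the paper just writes out the formulas for $H$ (e.g.\ $H(u,1\to 0)=Gu\circ\alpha_x$ in the lax case, $\alpha_y\circ Fu$ in the oplax case) rather than appealing to duality. The only nitpick is your claim that a lax transformation is \emph{exactly} a normal lax functor on the cylinder with the given restrictions — only the direction you actually use (transformation $\Rightarrow$ cylinder functor) is needed or true in general — but this does not affect the argument.
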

\begin{proof}
Suppose $\alpha:F\Rightarrow G:\B\rightsquigarrow\C$ is a lax transformation. There is a normal lax functor
$H:\B\times [1]\rightsquigarrow \C$ making  the diagram commutative
\begin{equation}\label{homo}
\xymatrix@C=60pt{\B\times [0]\cong \B \ar[d]_{\textstyle 1\times\delta_0}\ar@{~>}[dr]^{\textstyle F}& \\ \B\times[1]\ar@{~>}[r]^{\textstyle H}&\C,\\
\B\times [0]\cong \B\ar@{~>}[ru]_{\textstyle G}\ar[u]^{\textstyle 1\times \delta_1}&}
\end{equation}
that carries a morphism in $\B\times[1]$ of the form $(x\overset{u}\to y,1\to 0):(x,1)\to (y,0)$ to the composite
morphism in $\C$
$$Fx\overset{ \alpha_x}\longrightarrow Gx\overset{  Gu}\longrightarrow  Gy,$$
and a deformation $(\phi,1_{1\to 0}):(x\overset{u}\to y,1\to 0)\Rightarrow (x\overset{v}\to y,1\to 0)$ to
$$ G\phi\circ 1_{\alpha_x}:
 Gu\circ \alpha_x\Rightarrow  Gv\circ\alpha_x.$$

For $x\overset{v}\to y\overset{u}\to z$ two composable morphisms in $\B$,  the structure constraints
$$H(y\overset{u}\to z,1\to 0)\circ H(x\overset{v}\to y,1\to 1)\Rightarrow H(x\overset{u\circ v}\to z,1\to 0)$$
and
$$H(y\overset{u}\to z,0\to 0)\circ H(x\overset{v}\to y,1\to 0)\Rightarrow H(x\overset{u\circ v}\to z,1\to 0)$$
are, respectively, given by the composite deformations
$$
\xymatrix{ Gu\circ\alpha_y\circ Fv\ar@{=>}[r]^(0.47){1\circ
\alpha_v}& Gu\circ  Gv\circ \alpha_x \ar@{=>}[r]^{ G_{u,v}\circ
1}& G(u\circ v)\circ \alpha_x}
$$
and
$$
\xymatrix{ Gu\circ  Gv \circ \alpha_x\ar@{=>}[r]^{ G_{u,v}\circ
1}&  G(u\circ v)\circ \alpha_x.}
$$

Applying geometric nerve construction  to diagram (\ref{homo}), we obtain a diagram of simplicial set maps
$$
\xymatrix@C=60pt{\gner\B\times \gner[0]\cong \gner\B \ar[d]_{\textstyle 1\times \delta_0}\ar[dr]^{\textstyle \gner F}& \\
\gner\B\times\gner[1]\ar[r]^{\textstyle  \gner H}&\gner\C,\\
\gner\B\times \gner[0]\cong \gner\B\ar[ru]_{\textstyle \gner G}\ar[u]^{\textstyle 1\times\delta_1}&}
$$
showing that the simplicial maps $\gner F,\gner G:\gner\B\to \gner\C$ are made homotopic by $\gner H$, whence the lemma follows by
Theorem \ref{teo1}.

The proof is similar for the case in which $\alpha:F\Rightarrow G:\B\rightsquigarrow\C$  is an oplax transformation,
but with a change in the construction of the lax functor $H:\B\times[1]\rightsquigarrow \C$ that makes diagram (\ref{homo}) commutative: now define  $H$ such that $H(x\overset{u}\to y, 1\to 0)=\alpha_y\circ Fu:Fx\to Gy.$ \qed
\end{proof}

\section{Homotopy cartesian squares induced by 2-functors}

Suppose $F:\B\to\C$ any given 2-functor between 2-categories \B\ and \C. For each object $z$ of $\C$, by the
\emph{homotopy fibre} 2-category $$\comma {z} F$$ we mean Gray's lax comma category $\comma
{\ulcorner z\urcorner} F$ where $\ulcorner z \urcorner:\mathbf{1}\rightarrow
\C$ is the ``name of an object" $z$ 2-functor \cite[\S 3.1]{gray}. Its objects are
the pairs $(x,v)$, where $x$ is an object of \B\ and $v:z\to Fx$ is a morphism in \C. A morphism $(u,\beta):(x,v)\to
(x',v')$ consists of a morphism $u\!:\! x\to x'$ in \B\ together with a deformation $\beta\!:\!Fu\circ v\Rightarrow \!v'$ in $\C$
$$
\xymatrix@R=15pt@C=15pt{\ar@{}[drr]|(.55)*+{\textstyle \Downarrow\!\beta}
                & Fx \ar[dl]_{\textstyle Fu}             \\
 Fx'  & &     z \,,  \ar[ul]_{\textstyle v} \ar[ll]^{\textstyle v'}     }
$$
and a deformation
$$\xymatrix@C=10pt{(x,v)  \ar@/^1pc/[rr]^{\textstyle (u,\beta)} \ar@/_1pc/[rr]_{\textstyle (u',\beta')} & {\ \Downarrow \!\alpha} &(x',v') }  $$
is a deformation $\alpha:u\Rightarrow u'$
in \B\ such that the diagram in $\C(z,Fx')$
$$
\xymatrix@C=10pt@R=15pt{Fu\circ v\ar@{=>}[rr]^{\textstyle \beta}\ar@{=>}[rd]_{\textstyle F\!\alpha\circ 1_v}&&v'\\&Fu'\circ v \ar@{=>}[ru]_{\textstyle \beta'}&}
$$
commutes. Compositions in $\comma {z} F$ derive naturally from those in $\B$ and $\C$.

Similarly, one has the homotopy fibre 2-categories $\comma{F}z$, with objects the pairs $(x,Fx\overset{v}\to z)$. The
arrows $(u,\beta):(x,v)\to (x',v')$, are pairs where $u: x\to x'$ is a morphism in $\B$ and $\beta:v'\circ
Fu \Rightarrow v$ is a deformation in $\C$; and a deformation $\alpha:(u,\beta)\Rightarrow (u',\beta'):(x,v)\to (x',v')$ is a
deformation $\alpha:u\Rightarrow u'$ in $\B$ such that $\beta'(1_{v'}\circ F\alpha)=\beta$.

In particular, when $F=1_\C$ is the identity 2-functor on \C\, we have the {\em comma $2$-categories} $\comma{z}\C$, of objects
under an object $z$, and $\comma{\C}z$, of objects over $z$. The following fact is proved in \cite[Proposition 4.1]{b-c}:

\begin{lemma}\label{cont}
For any object $z$ of a $2$-category \C, the classifying spaces of the comma $2$-categories $\class(\comma{z}\C)$ and $\class(\comma{\C}z)$ are contractible.
\end{lemma}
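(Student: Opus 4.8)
The plan is to show that each of the two comma $2$-categories has a distinguished object which is, in an appropriate lax sense, initial (resp.\ terminal), and then to deduce contractibility from Lemma \ref{trans}. I describe the argument for $\comma{z}\C$; the case of $\comma{\C}z$ is formally dual. The distinguished object will be $(z,1_z)$, with $1_z\colon z\to z$ the identity morphism, and the key remark is that for every object $(x,v)$ of $\comma{z}\C$ there is a canonical morphism $(v,1_v)\colon(z,1_z)\to(x,v)$, whose deformation component is the identity $2$-cell $v\circ 1_z\Rightarrow v$.

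First I would take the constant $2$-functor $c\colon\comma{z}\C\to\comma{z}\C$ at $(z,1_z)$; as it factors through the terminal $2$-category, $\class c$ is a constant map. Next I would assemble the morphisms above into an oplax transformation $\eta\colon c\Rightarrow\mathrm{id}$, by putting $\eta_{(x,v)}=(v,1_v)$ and, for a $1$-cell $(u,\beta)\colon(x,v)\to(x',v')$, taking the structure deformation $\eta_{(u,\beta)}\colon\mathrm{id}(u,\beta)\circ\eta_{(x,v)}\Rightarrow\eta_{(x',v')}\circ c(u,\beta)$ to be $\beta$ itself, seen as a $2$-cell of $\C$. Unwinding the composition law of Gray's lax comma $2$-category gives $\mathrm{id}(u,\beta)\circ\eta_{(x,v)}=(u\circ v,\beta)$ and $\eta_{(x',v')}\circ c(u,\beta)=(v',1_{v'})$, and the condition defining a $2$-cell of $\comma{z}\C$ between these two morphisms is met exactly by $\beta$; both the naturality of $\eta_{(u,\beta)}$ in $(u,\beta)$ and the normalization $\eta_{1_{(x,v)}}=1_{\eta_{(x,v)}}$ are then immediate, and the coherence axiom collapses, after expanding the composition of $1$- and $2$-cells, to an identity of the form $\beta'\cdot(1_{u'}\circ\beta)=\beta'\cdot(1_{u'}\circ\beta)$ for a composable pair $(u,\beta),(u',\beta')$. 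Granting this, Lemma \ref{trans} yields that $\class c$ and $\class(\mathrm{id})=\mathrm{id}_{\class(\comma{z}\C)}$ are homotopic, so $\class(\comma{z}\C)$ is contractible. For $\comma{\C}z$ one argues dually: now $(z,1_z)$ receives a canonical morphism $(v,1_v)\colon(x,v)\to(z,1_z)$ from every object, these organise into a lax transformation $\mathrm{id}\Rightarrow c'$ to the constant $2$-functor $c'$ at $(z,1_z)$ (with structure deformations again the relevant $\beta$'s), and Lemma \ref{trans} once more makes $\mathrm{id}_{\class(\comma{\C}z)}$ homotopic to a constant map.

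The one step needing genuine care --- though it is bookkeeping rather than a real difficulty --- is checking that the $2$-cells $\beta$ do fit together into an oplax (resp.\ lax) transformation. This is a direct consequence of the explicit formulas for composing $1$- and $2$-cells in the lax comma $2$-category, together with the fact that a $2$-cell $\alpha\colon(u,\beta)\Rightarrow(u',\beta')$ there is, by definition, precisely a $2$-cell $\alpha\colon u\Rightarrow u'$ of $\B$ satisfying $\beta'\cdot(\alpha\circ 1_v)=\beta$; so no serious obstacle arises and no choices enter the construction.
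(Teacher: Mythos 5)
Your proof is correct and follows essentially the same route as the paper: the constant $2$-functor at $(z,1_z)$, the canonical oplax transformation to the identity with components $(v,1_v)$ at objects and $\beta$ at morphisms $(u,\beta)$, and an appeal to Lemma \ref{trans}, with the dual (lax) transformation handling $\comma{\C}z$. The only difference is that you spell out the verification of the oplax/lax axioms, which the paper leaves implicit.
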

\begin{proof} We shall discuss the case of $\class(\comma{z}\C)$. Let $\mbox{Ct}_z:\comma{z}\C\to \comma{z}\C$ be the constant 2-functor sending every object
of $\comma{z}\C$ to $(z,1_z)$, every morphism to the identity morphism $1_{(z,1_z)}=(1_z,1_{1_z})$ and every
deformation to the identity deformation $1_{1_z}:1_{(z,1_z)}\Rightarrow 1_{(z,1_z)}$. There is a canonical oplax
transformation $\mbox{Ct}_z\Rightarrow 1_{\comma{z}\C}$ whose component at an object $(x,z\overset{v}\to x)$ is the
morphism $(v,1_v):(z,1_z)\to (x,v)$, and whose component at a morphism $(u,\beta):(x,v)\to (x',v')$ is $\beta$. From
Lemma \ref{trans}, it follows that $\class (1_{\comma{z}\C})= 1_{\class (\comma{z}\C)}$ and $\class \mbox{Ct}_z=
\mbox{Ct}_{\class z}$ are homotopic, whence the lemma. \qed\end{proof}

Returning to an arbitrary 2-functor $F:\B\to\C$, observe that, for any object $z$ of $\C$, there is a $2$-functor
$$
\Phi:\comma{z}{F}\to \B,$$ defined by forgetting the second components $$ \xymatrix @C=8pt{(x,v) \ar@/^1pc/[rr]^{\textstyle (u,\beta)} \ar@/_1pc/[rr]_-{\textstyle (u',\beta')}
& {\Downarrow\!\alpha} & (x',v')} \xymatrix{\ar@{|->}[r]^{\textstyle \Phi}&} \xymatrix @C=5pt{x
\ar@/^0.7pc/[rr]^{\textstyle u} \ar@/_0.7pc/[rr]_-{\textstyle u'} & {\Downarrow\!\alpha} &
x'},
$$
and there is a commutative square of $2$-functors
\begin{equation}\label{sq1}
\xymatrix{\comma{z}F\ar[r]^{\textstyle \Phi}\ar[d]_{\textstyle F'}& \B\ar[d]^{\textstyle F}&\\
\comma{z}\C\ar[r]^{\textstyle \Phi}&\C,&}
\end{equation}
where $F':\comma{z}F\to\comma{z}\C$ is given by
$$
 \xymatrix @C=8pt{(x,v) \ar@/^1pc/[rr]^{\textstyle (u,\beta)} \ar@/_1pc/[rr]_-{\textstyle (u',\beta')}
& {\Downarrow\!\alpha} & (x',v')} \xymatrix{\ar@{|->}[r]^{\textstyle F'}&} \xymatrix @C=8pt{(Fx,v)\
\ar@/^1pc/[rr]^{\textstyle (Fu,\beta)} \ar@/_1pc/[rr]_-{\textstyle (Fu',\beta')} & {\Downarrow\!F\alpha} &
(Fx',v')}.
$$

Moreover, for any morphism $w: z_1\to z_0$ in \C\,  the assignment
$$
 \xymatrix @C=8pt{(x,v) \ar@/^1pc/[rr]^{\textstyle (u,\beta)} \ar@/_1pc/[rr]_-{\textstyle (u',\beta')}
& {\Downarrow\! \alpha} & (x',v')} \xymatrix{\ar@{|->}[r]^{\textstyle w^*}&} \xymatrix @C=1pc{(x,v\circ w)\
\ar@/^1pc/[rr]^{\textstyle (u,\beta\circ 1_w)} \ar@/_1pc/[rr]_-{\textstyle (u',\beta'\circ 1_w)} & {\Downarrow\! \alpha} &
(x',v'\circ w)},
$$
defines a 2-functor $w^*: \comma{z_0}F \to \comma{z_1}F$, and the analogous to Quillen's Theorem B for 2-functors is stated as follows:

\begin{theorem}\label{B} Let $F:\B\to\C$ be a $2$-functor such that, for every morphism $z_1\to z_0$ in $\C$, the induced
map $\class(\comma{z_0}F)\to \class(\comma{z_1}F)$ is a homotopy
equivalence. Then, for every object $z$ of $\C$, the induced square
by $(\ref{sq1})$ on classifying spaces
\begin{equation}\label{hc}
\xymatrix{\class(\comma{z}F)\ar[r]\ar[d]&\class \B\ar[d]\\
\class(\comma{z}\C)\ar[r]&\class\C}
\end{equation}
is homotopy cartesian.  Therefore, for each object $x\in F^{-1}z$,
there is a homotopy fibre sequence $$\class (\comma{z}F)\to \class
\B\to\class\C,$$ relative to the base points $z$ of $\class \C$, $x$
of $\class \B$ and $(x,1_z)$ of $\class(\comma{z}F)$, that induces a
long exact sequence on homotopy groups
$$\cdots \to \pi_{n+1}\class\C\to\pi_n\class(\comma{z}F)\to\pi_n\class\B\to\pi_n\class\C\to\cdots.$$
\end{theorem}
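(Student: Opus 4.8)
The plan is to adapt the Goerss--Jardine proof of Quillen's Theorem B \cite[\S IV.5]{g-j} to the geometric nerve model of classifying spaces. First, using Theorem \ref{teo1} I would replace the four classifying spaces occurring in $(\ref{hc})$ by the realizations of the corresponding geometric nerves, so that it suffices to prove that the square of simplicial sets
$$\xymatrix{\gner(\comma{z}F)\ar[r]\ar[d]&\gner\B\ar[d]\\ \gner(\comma{z}\C)\ar[r]&\gner\C}$$
is homotopy cartesian. Since $\gner(\comma{z}\C)$ is contractible by Lemma \ref{cont} and contains the vertex $(z,1_z)$, this amounts to showing that the natural comparison map $\gner(\comma{z}F)\to\mathrm{hofib}_{z}\big(|\gner\B|\to|\gner\C|\big)$ is a weak equivalence; granted this, the asserted homotopy fibre sequence and its long exact homotopy sequence follow in the usual way, based at $z\in\class\C$, at any $x\in F^{-1}z$ in $\class\B$, and at $(x,1_z)\in\class(\comma{z}F)$.

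The technical core is the construction of an auxiliary bisimplicial set $E$, playing the role of the bisimplicial set $\coprod_{z_0\to\cdots\to z_p}\ner_q(\comma{z_p}F)$ used in the classical argument. Here $E_{p,q}$ should consist of a $p$-simplex $\sigma$ of $\gner\C$ together with a $q$-simplex of the geometric nerve of the homotopy fibre $2$-category $\comma{z}F$ attached to the relevant vertex $z$ of $\sigma$; that is, $E$ is a double-nerve type model for the enriched Grothendieck construction on the lax functor $\C^{o}\rightsquigarrow\dcat$, $z\mapsto\comma{z}F$, $w\mapsto w^{*}$. What makes the $2$-categorical situation substantially more delicate than Quillen's is the definition of the simplicial operators of $E$ in the $\gner\C$-direction: in a geometric nerve one is given only coherence deformations $z_{i,j,k}:z_{i,j}\circ z_{j,k}\Rightarrow z_{i,k}$ rather than the equalities $z_{i,k}=z_{i,j}\circ z_{j,k}$ available in an ordinary nerve, so these operators must act on the $\comma{z}F$-coordinate through the $z_{i,j,k}$ (essentially by whiskering the structure deformations $\beta$ of the comma $2$-category), and the verification of the simplicial identities for $E$ then rests on the cocycle condition satisfied by the $z_{i,j,k}$. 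With $E$ in hand I would record: (a) a projection $E\to\bar E$ onto the bisimplicial set with $\bar E_{p,q}=\gner_p\C$ constant in $q$, for which $\diag\bar E=\gner\C$; (b) for each fixed $p$, that $E_{p,*}$ is the disjoint union over the $p$-simplices $\sigma$ of $\gner\C$ of the geometric nerves $\gner(\comma{z}F)$ of the attached fibre $2$-categories, so that each connected component of $E_{p,*}\to\bar E_{p,*}$ is, up to isomorphism, the projection $\gner(\comma{z}F)\to\ast$; and (c) a natural weak equivalence $\diag E\simeq\gner\B$ over $\gner\C$.

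Step (c) is what I expect to be the main obstacle. It is the $2$-categorical analogue of Thomason's identification of a Grothendieck construction with a homotopy colimit, equivalently of the statement that forgetting the $\C$-coordinate from the Grothendieck construction on $z\mapsto\comma{z}F$ induces a homotopy equivalence onto $\class\B$. I would prove it by applying the generalization of Quillen's Theorem A to $2$-functors established in \cite{b-c} to this forgetful $2$-functor: an inspection shows that its homotopy fibre $2$-categories have the homotopy type of comma $2$-categories of $\C$, which are contractible by Lemma \ref{cont}, whence the desired equivalence. The bookkeeping is delicate, since one must transport the deformation data through every simplicial level and match it up with the diagonal of $E$, but Fact \ref{fact} reduces most of the needed comparisons to levelwise weak equivalences.

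It remains to bring in the hypothesis. Since every $w^{*}:\comma{z_0}F\to\comma{z_1}F$ induced by a morphism $z_1\to z_0$ of $\C$ is a homotopy equivalence, property (b) implies that for each $p$ and each vertex $[0]\to[p]$ the induced square
$$\xymatrix{E_{p,*}\ar[r]\ar[d]&E_{0,*}\ar[d]\\ \bar E_{p,*}\ar[r]&\bar E_{0,*}}$$
is homotopy cartesian, because on each connected component the comparison map between fibres is, up to homotopy, a composite of maps of the form $w^{*}$. As $\bar E$ is discrete in the $q$-direction it is trivially $\pi_{*}$-Kan, and $E$ is $\pi_{*}$-Kan for the same reason of uniform fibre type; the Bousfield--Friedlander theorem \cite[\S IV.4]{g-j} therefore applies to $E\to\bar E$ and shows that over the vertex $z$ of $\gner\C$ the homotopy fibre of $\diag E\to\diag\bar E=\gner\C$ is the fibre of $E_{0,*}\to\bar E_{0,*}$ over $z$, which by (b) is $\gner(\comma{z}F)$. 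Feeding this back through the equivalence $\diag E\simeq\gner\B$ of step (c) and through Theorem \ref{teo1} shows that $(\ref{hc})$ is homotopy cartesian; and since $\class(\comma{z}\C)$ is contractible, this homotopy cartesian square collapses to the homotopy fibre sequence $\class(\comma{z}F)\to\class\B\to\class\C$ with the indicated base points, whose associated long exact sequence is the stated exact sequence of homotopy groups.
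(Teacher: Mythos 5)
The central difficulty of the $2$-categorical Theorem B lies exactly where your sketch waves it away: your auxiliary object $E$ is not well defined as a bisimplicial set. If $E_{p,q}$ consists of a geometric $p$-simplex $\z$ of $\C$ together with a $q$-simplex of $\gner(\comma{z_0}F)$ (the fibre attached at the vertex the variance of $w^*$ dictates), and the outer face operators transport the fibre coordinate along the edges $z_{i,j}$, then the simplicial identities already fail on the objects of the fibre: on a $2$-simplex $\z$, the composite $d_0d_0$ sends $(x,\,v)$ to $(x,\,v\circ z_{0,1}\circ z_{1,2})$ while $d_0d_1$ sends it to $(x,\,v\circ z_{0,2})$, and in a geometric nerve $z_{0,2}\neq z_{0,1}\circ z_{1,2}$ in general. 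The coherence cell $z_{0,1,2}$ relates these two objects only by a (non-identity) morphism of $\comma{z_2}F$; whiskering the structure deformations cannot change the underlying $1$-cells, so the cocycle condition gives you at best a pseudo-simplicial object, not a bisimplicial set, and the claim that ``the simplicial identities rest on the cocycle condition'' is false as stated. Rectifying this lax diagram is precisely the nontrivial step, and it is what the paper's construction accomplishes by thickening the fibres: the simplicial $2$-category $\comma{[-]}F$, whose fibre over $\z:[q]\rightsquigarrow\C$ is the $2$-category $\comma{\z}F$ of pairs $(x,\lv)$ with $\lv:[q+1]\rightsquigarrow\C$ extending $\z$, has strictly functorial operators, and Lemma \ref{lemma2} shows that $\comma{\z}F$ deformation-retracts onto $\comma{z_0}F$ (with Lemma \ref{lemma22} handling compatibility of these retractions with the operators, which is where the hypothesis on the maps $w^*$ enters, as in your sketch).

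Two further steps are underpowered even granting a corrected $E$. First, your appeal to the Bousfield--Friedlander theorem requires the $\pi_*$-Kan condition on $E$; vertical discreteness takes care of $\bar E$, but the vertical simplicial sets $E_{p,*}$ are large disjoint unions of nerves of fibre $2$-categories, and ``uniform fibre type'' is not a verification. The paper deliberately avoids this machinery: its Lemma \ref{car} proves the needed homotopy-cartesian square directly, by factoring $z_0:\gner[0]\to\gner\C$ into a trivial cofibration followed by a Kan fibration via the small object argument and checking, for each composite $\Lambda^{k}[n]\hookrightarrow\gner[n]\to\gner\C$, that pulling back $\Psi$ yields a diagonal weak equivalence, using Lemmas \ref{lemma2}, \ref{lemma22} and Fact \ref{fact}; this is the $2$-categorical analogue of the Goerss--Jardine argument and is exactly designed to bypass the $\pi_*$-Kan issue. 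Second, your step (c) deduces $\diag E\simeq\gner\B$ from Theorem A of \cite{b-c} applied to the forgetful $2$-functor out of the Grothendieck construction on $z\mapsto\comma{z}F$; but this presupposes identifying $E$ (or its diagonal) with a nerve of that Grothendieck construction, which is itself a Thomason-type comparison of the kind the paper only establishes later (Theorem \ref{hct}, via the $\overline{W}$ codiagonal). The paper's Lemma \ref{lemphi} instead gets $\class(\comma{[-]}F)\simeq\class\B$ directly: levelwise in the geometric-nerve direction the fibres of the augmentation $\Phi$ over a simplex $\x$ of $\B$ are the nerves $\gner(\comma{\C}{F\x})$, contractible by Lemma \ref{lemma2}(ii), and Fact \ref{fact} concludes. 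So your overall strategy is the right one in outline, but the two pillars it stands on -- a strictly defined $E$ and the fibre-transport-to-homotopy-fibre lemma -- are precisely the points left unproved.
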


The remainder of this section is mainly devoted to the proof of this theorem. To do so, we shall start with the following construction of
2-categories $\comma{[q]}F$, one for each integer $q\geq 0$,  which,
in the case of $q=0$ and up to almost obvious identification,  yields
the coproduct $\bigsqcup_{z}\comma{z}F$, of the different homotopy
fibre 2-categories:

For each $q\geq 0$, let $$\comma{[q]}F$$ be the 2-category whose
objects are pairs $(x,\mathbf{v})$, where $x$ is an object of $\B$
and $\mathbf{v}:[q+\!1]\!\rightsquigarrow \C$ is a normal lax
functor such that $v_0=Fx$. A morphism $(u,\y)$ consists of a morphism $u:x\to x'$ in
$\B$ together with a normal lax functor $\y:[q+\!2]\rightsquigarrow \C$
such that $y_{0,1}=Fu$; the source of $(u,\y)$ is $(x,\y\delta_0)$ and its
target is $(x',\y\delta_1)$. Note that, since the square
$$
\xymatrix@R=16pt@C=16pt{[q]\ar[r]^{\delta_0}\ar[d]_{\delta_0}&[q+\!1]\ar[d]^{\delta_0}\\[q+\!1]\ar[r]^{\delta_1}&[q+\!2]}
$$
is commutative and cartesian in $\mathbf{Cat}$, given two objects
$(x,\mathbf{v})$ and $(x',\mathbf{v'})$ of $\comma{[q]}F$, the
existence of a morphism between them requires that
$\mathbf{v}\delta_0=\mathbf{v'}\delta_0$, and then such a morphism
$(u,\y):(x,\mathbf{v})\to (x',\mathbf{v'})$ is completely specified
by the morphism $u:x\to x'$ and the deformations
$$
\xymatrix{\ar@{}[drr]|(.6)*+{\hspace{-10pt}\Downarrow\!
y_{{0,1,i\text{+}2}}}
                & Fx \ar[dl]_{\textstyle Fu}             \\
 Fx'  & &     v_{i\text{+}1}\!=\!v'_{i\text{+}1}   \ar[ul]_{\textstyle v_{0,i\text{+}1}} \ar[ll]^{\textstyle v'_{0,i\text{+}1}}     }
$$
for $0\leq i\leq q$. A deformation in  $\comma{[q]}F$
$$\xymatrix@C=10pt@R=15pt{(x,\lv)  \ar@/^1pc/[rr]^{\textstyle (u,\y)} \ar@/_1pc/[rr]_{\textstyle (u',\y')} & {\Downarrow \alpha} &(x',\lv') }
  $$
is a deformation $\alpha:u\Rightarrow u'$ in \B\ such that the
triangles
$$
\xymatrix@C=10pt@R=15pt{Fu\circ
v_{0,i\text{+}1}\ar@{=>}[rr]^{\textstyle
y_{{0,1,i\text{+}2}}}\ar@{=>}[rd]\ar@{}@<-3pt>[rd]_(0.4){\textstyle
F\!\alpha\circ 1}&&v'_{0,i\text{+}1}\\&Fu'\circ v_{0,i\text{+}1}
\ar@{=>}[ru]_(0.6){\textstyle y'_{{0,1,i\text{+}2}}}&}
$$
commute, for  $0\leq i\leq q$.

Compositions in $\comma {[q]} F$ come from those in \B\ and \C\ in
the natural way. Thus,  a morphism
$(u,\y):(x,\lv)\to (x',\lv')$ composes horizontally with a morphism $(u',\y'):(x',\lv')\to
(x'',\lv'')$ yielding the morphism $(u'\circ u,\y''):(x,\lv)\to
(x'',\lv'')$, where, for $0\leq i\leq q$, the deformation
$y''_{{0,1,i\text{+}2}}: F(u'\circ u)\circ v_{0,i\text{+}1}
\Rightarrow v''_{0,i\text{+}1}$ is the composition
$$
\xymatrix@C=50pt{Fu'\circ Fu\circ
v_{0,i\text{+}1}\ar@{=>}[r]^-{\textstyle 1\circ
y_{{0,1,i\text{+}2}}}&Fu'\circ
v'_{0,i\text{+}1}\ar@{=>}[r]^-{\textstyle
y'_{{0,1,i\text{+}2}}}&v''_{0,i\text{+}1}.}
$$
The identity morphism for each object in $\comma {[q]} F$ is
provided by the surjection $\sigma_0:[q+1]\to [q]$ which repeats the
$0$ element, by $ 1_{(x,\lv)}=(x,\lv\sigma_0)$. And deformations in
$\comma {[q]} F$ compose, both horizontally and vertically, as in
$\B$.

Similarly, one has the 2-categories
$$
\comma{F}[q],\hspace{0.6cm} q\geq 0,
$$
with objects the pairs  $(x,\mathbf{v})$, where $x$ is an object of $\B$
and $\mathbf{v}:[q+\!1]\!\rightsquigarrow \C$ is a normal lax
functor such that $v_{q+1}=Fx$. The morphisms $(u,\y):(x,\mathbf{v})\to (x',\mathbf{v'})$, are pairs where $u:x\to x'$ is a morphism in $\B$ and $\y:[q+2]\rightsquigarrow \C$ is a normal lax functor such that $y_{q+1,q+2}=Fu$, $\y\delta_{q+1}=\lv$ and $\y\delta_{q+2}=\lv'$; and a deformation $\alpha:(u,\y)\Rightarrow (u',\y'):(x,\lv)\to (x',\lv')$ is a deformation $\alpha:u\Rightarrow u'$ in $\B$ such that $y'_{i,q+1,q+2}(1_{v'_{i\!,\!q+1}}\!\circ F\alpha)=y_{i,q+1,q+2}$, for $0\leq i\leq q$.

\vspace{0.2cm}

Regarding the set $\gner_q\C$, of geometric $q$-simplices of the
2-category $\C$, as a discrete  2-category whose morphisms and
deformations are all identities, the 2-functors
\begin{equation}\label{npsi}
\Psi_{\!q}:\comma{[q]}F\longrightarrow \Delta_q\C, \hspace{0.3cm} \Psi'_{\!q}:\comma{F}[q]\longrightarrow \Delta_q\C
\end{equation}
are respectively defined by
$$(x,\lv)\overset{\textstyle \Psi}\longmapsto \lv\delta_0,\hspace{0.3cm}
(x,\lv)\overset{\textstyle \Psi'}\longmapsto \lv\delta_{q+1}.$$
And these 2-functors give rise to a decomposition of the 2-categories
$\comma{[q]}F$ and $\comma{F}[q]$ as
$$
\comma{[q]}F= \bigsqcup_{\z:[q]\rightsquigarrow\C}\comma{\z}F,\hspace{0.3cm}
\comma{F}[q]= \bigsqcup_{\z:[q]\rightsquigarrow\C}\comma{F}\z,
$$
where, for each geometric q-simplex $\z:[q]\rightsquigarrow\C$, the
{\em homotopy fibre $2$-categories of $F$ at $\z$}
$$\comma{\z}F:=\Psi^{-1}_{\!q}(\z),\hspace{0.3cm}
\comma{F}\z:=\Psi'^{-1}_{\!q}(\z),$$ are respectively  defined to be the full
2-subcategories of $\comma{[q]}F$ and $\comma{F}[q]$ fibre of $\Psi_{\!q}$ and $\Psi'_{\!q}$ at $\z$.

In particular, when $F=1_\C$ is the identity 2-functor on \C\, we have the comma 2-categories $$\comma{\z}\C,\hspace{0.3cm}  \comma{\C}\z,$$ of objects
under and over a normal lax functor $\z:[q]\rightsquigarrow \C$.

\vspace{0.2cm}
The following required lemma does not use any hypothesis on the 2-functor $F$ in Theorem (\ref{B}).

\begin{lemma}\label{lemma2}
Let  $\z\!:\![q]\rightsquigarrow \C$ be any given normal lax functor. Then,
\begin{enumerate}
\item[(i)] There are $2$-functors
\begin{equation}\label{gamz} \Gamma:\comma{z_0}F\to \comma{\z}F,\hspace{0.3cm} \Gamma':\comma{F}z_q\to\comma{F}\z,
\end{equation} both
inducing
homotopy equivalences on classifying spaces $$\class(
\comma{z_0}F)\simeq  \class(\comma{\z}F),\hspace{0.3cm} \class(\comma{F}z_q)\simeq \class(\comma{F}\z).$$
\item[(ii)] The classifying spaces of the comma $2$-categories $\class(\comma{\z}\C)$ and $\class(\comma{\C}\z)$ are contractible.
    \end{enumerate}
\end{lemma}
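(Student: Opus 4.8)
The plan is to deduce (ii) from (i) together with Lemma~\ref{cont}, and to prove (i) by an explicit ``deformation retraction'' of $\comma{\z}F$ onto $\comma{z_0}F$ (and dually of $\comma{F}\z$ onto $\comma{F}z_q$). Write $z_0$ and $z_q$ for the first and last vertices of $\z$ and $z_{i,j}:z_j\to z_i$, $z_{i,j,k}$ for its structure morphisms and deformations. The key observation is that an object $(x,\lv)$ of $\comma{\z}F$ already contains an object of $\comma{z_0}F$, namely $(x,v_{0,1})$ with $v_{0,1}:z_0\to Fx$, and that the remaining data of the normal lax functor $\lv:[q+\!1]\rightsquigarrow\C$ is determined by $\z$ and $v_{0,1}$ up to canonical structure deformations of $\lv$ itself.

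First I would build the $2$-functor $\Gamma:\comma{z_0}F\to\comma{\z}F$ of (\ref{gamz}). Given $(x,v)$ with $v:z_0\to Fx$, let $\lv^v:[q+\!1]\rightsquigarrow\C$ be the normal lax functor that restricts along $\delta_0:[q]\to[q+\!1]$ to $\z$, has $v^v_0=Fx$, new face morphisms $v^v_{0,j+\!1}=v\circ z_{0,j}$ (with $z_{0,0}=1_{z_0}$) and new structure deformations $v^v_{0,j+\!1,k+\!1}=1_v\circ z_{0,j,k}$; that this is a well-defined normal lax functor follows immediately from the normalization and cocycle conditions for $\z$ and the strictness of $\C$. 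Putting $\Gamma(x,v)=(x,\lv^v)$, sending a morphism $(u,\beta):(x,v)\to(x',v')$ to the morphism of $\comma{\z}F$ specified by $u$ and the deformations $\beta\circ 1_{z_{0,i}}$ ($0\le i\le q$), and a deformation $\alpha$ to $\alpha$, gives the desired $2$-functor; the only step requiring care is compatibility with horizontal composition, which unwinds directly from the composition formula for $\comma{[q]}F$ recalled above. In the opposite direction, ``reading off the first face'' gives a $2$-functor $\Lambda:\comma{\z}F\to\comma{z_0}F$, $(x,\lv)\mapsto(x,v_{0,1})$, $(u,\y)\mapsto(u,y_{0,1,2})$, the identity on deformations (this is a $2$-functor straight from the definitions of composition and identities in $\comma{[q]}F$), and since $v^v_{0,1}=v$ one has $\Lambda\Gamma=1_{\comma{z_0}F}$ on the nose.

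Next I would compare $\Gamma\Lambda$ with $1_{\comma{\z}F}$. For an object $(x,\lv)$ one has $\Gamma\Lambda(x,\lv)=(x,\lv^\sharp)$ with $v^\sharp_{0,j+\!1}=v_{0,1}\circ v_{1,j+\!1}$, and $\lv$ supplies the comparison deformations $v_{0,1,j+\!1}:v_{0,1}\circ v_{1,j+\!1}\Rightarrow v_{0,j+\!1}$; these assemble into morphisms $(1_x,\{v_{0,1,i+\!1}\}_{0\le i\le q}):\Gamma\Lambda(x,\lv)\to(x,\lv)$ of $\comma{\z}F$, and I claim these are the components of a ($2$-natural, hence lax) transformation $\Gamma\Lambda\Rightarrow 1_{\comma{\z}F}$: naturality at a morphism $(u,\y)$ is precisely the commutativity of the cocycle square of the lax functor $\y:[q+\!2]\rightsquigarrow\C$ on the tetrahedron with vertices $\{0,1,i+\!2\}$, for each $0\le i\le q$. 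By Lemma~\ref{trans}, $\class(\Gamma\Lambda)$ is homotopic to the identity, and combined with $\class\Lambda\circ\class\Gamma=\mathrm{id}$ this shows $\class\Gamma$ is a homotopy equivalence with inverse $\class\Lambda$. The statement for $\Gamma':\comma{F}z_q\to\comma{F}\z$ is entirely dual: one uses $\delta_{q+\!1},\delta_{q+\!2}$ in place of $\delta_0$, sets $\Gamma'(x,v)=(x,\lv^v)$ with $v^v_{i,q+\!1}=z_{i,q}\circ v$, $\Lambda'(x,\lv)=(x,v_{q,q+\!1})$, and uses the comparison deformations $v_{i,q,q+\!1}$. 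Finally, (ii) follows by applying (i) with $F=1_\C$: it gives $\class(\comma{z_0}\C)\simeq\class(\comma{\z}\C)$ and $\class(\comma{\C}z_q)\simeq\class(\comma{\C}\z)$, and the left-hand spaces are contractible by Lemma~\ref{cont}. I expect the main obstacle to be a purely bookkeeping one — verifying that $\lv^v$ is a genuine normal lax functor, that $\Gamma$ respects horizontal composition, and that $\Gamma\Lambda\Rightarrow 1$ is natural, all of which reduce to instances of coherence conditions already in force — together with keeping the several indexing conventions built into the definition of $\comma{[q]}F$ (source versus target, $\delta_0$ versus $\delta_1$) straight; no input beyond Lemmas~\ref{trans} and~\ref{cont} should be needed.
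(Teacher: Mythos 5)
Your proposal is correct and takes essentially the same route as the paper: the same $2$-functor $\Gamma$, the same ``read off the first face'' retraction (your $\Lambda$ is the paper's $\Theta$), the same $2$-natural transformation $\Gamma\Lambda\Rightarrow 1$ with components built from the structure deformations $v_{0,1,i+1}$ and handled via Lemma~\ref{trans}, the dual construction for $\Gamma'$, and (ii) deduced from (i) together with Lemma~\ref{cont}. Only a trivial indexing slip: the naturality of $\Gamma\Lambda\Rightarrow 1$ at a morphism $(u,\y)$ is the cocycle condition of $\y$ on the tetrahedron with vertices $\{0,1,2,i+2\}$ (four vertices), not $\{0,1,i+2\}$.
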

\proof $(i)$  We shall discuss the case of $\Gamma$, which is defined as follows: It carries an object $(x,v)$ of $\comma{z_0}F$ to the object $(x,v^\z)$ of $\comma{\z}F$, where
 $v^\z:[q+1]
 \rightsquigarrow \C$ is the normal lax functor defined by the equalities
 $$\begin{array}{ll}v^\z\delta_0=\z,\hspace{0.3cm}v^\z_0=Fx,\\[4pt] v^\z_{0,i+1}=v\circ z_{0,i}:z_i\overset{z_{0,i}}\to
 z_0\overset{v}\to Fx,\\[4pt]
v^\z_{0,i+1,j+1}=1_v\circ z_{0,i,j}:v\circ z_{0,i}\circ
z_{i,j}\Rightarrow v\circ z_{0,j}.\end{array}$$  A morphism $(u,y):(x,v)\to (x',v')$ in $\comma{z_0}F$ is
carried by $\Gamma$ to the morphism $(u,y^\z):(x,v^\z)\to
(x',v'^\z)$ of $\comma{\z}F$ specified by the deformations
$$y^\z_{0,1,i+2}=y\circ 1_{z_{0,i}}:Fu\circ v\circ
z_{0,i}\Longrightarrow v'\circ z_{0,i},$$ for $0\leq i\leq q$, and
on deformations, the 2-functor acts by the simple rule
$\Gamma(\alpha)\!=\!\alpha$.

Actually, this 2-functor $\Gamma$ embeds $\comma{z_0}F$ into
$\comma{\z}F$ as a deformation retract,
 with retraction given by the 2-functor
$$\Theta:\comma{\z}F\to \comma{z_0}F,$$
$$
\xymatrix@C=10pt@R=15pt{(x,\lv)  \ar@/^1pc/[rr]^{\textstyle (u,\y)}
\ar@/_1pc/[rr]_{\textstyle (u',\y')} & {\Downarrow \alpha}
&(x',\lv') }\overset{\textstyle \Theta}\longmapsto
\xymatrix@C=10pt@R=15pt{(x,v_{0,1})  \ar@/^1pc/[rr]^{\textstyle
(u,y_{0,1,2})} \ar@/_1pc/[rr]_{\textstyle (u',y'_{0,1,2})} &
{\Downarrow \alpha} &(x',v'_{0,1}). }
$$

One observes that $\Theta \Gamma =1_{\comma{z_0}F}$. Furthermore,
there is a 2-natural transformation $\Gamma\Theta\Rightarrow
1_{\comma{\z}F}$, whose component at an object $(x,\lv)$ of
$\comma{\z}F$ is the morphism $(1_x,\tilde{\lv}):(x,v_{0,1}^\z)\to
(x,\lv)$ with
$$\tilde{\lv}_{0,1,i+2}=v_{0,1,i+1}:v_{0,1}\circ z_{0,i}\Longrightarrow v_{0,i+1},$$
for $0\leq i\leq q$. Then, by Lemma \ref{trans}, it follows that the
induced map $\class\Gamma$ embeds the space $\class (\comma{z_0}F)$
into $\class(\comma{\z}F)$ as a deformation retract, with
$\class\Theta$ a retraction.

The proof for $\Gamma'$ is similar. It carries an object $(x,v)$ of $\comma{F}z_p$ to the object $(x,\z^v)$ of $\comma{F}\z$, where $\z^v_{i,q+1}=z_{i,q}\circ v$, etc.

\vspace{0.2cm}
$(ii)$  After what has already been proven above, the result follows from Lemma \ref{cont}. \qed

\vspace{0.3cm}

Actually, we have a simplicial 2-category
\begin{equation}\label{ns}\comma{[-]}F=\bigsqcup_{q\geq 0}\comma{[q]}F,
\end{equation}
in which the induced 2-functor $\xi^*:\comma{[n]}F\to
\comma{[q]}F$, by a map $\xi:[q]\to [n]$ in the simplicial category,
is given by
$$
\xymatrix@C=10pt@R=15pt{(x,\lv)  \ar@/^1pc/[rr]^{\textstyle (u,\y)}
\ar@/_1pc/[rr]_{\textstyle (u',\y')} & {\Downarrow \alpha}
&(x',\lv') }\overset{\textstyle \xi^*}\longmapsto
\xymatrix@C=10pt@R=15pt{(x,\lv(\xi\text{+}1))
\ar@/^1pc/[rr]^{\textstyle (u,\y(\xi\text{+}2))}
\ar@/_1pc/[rr]_{\textstyle (u',\y'(\xi\text{+}2))} & {\Downarrow
\alpha} &(x',\lv'(\xi\text{+}1)), }
$$
where, for any integer $p\geq 0$, the map $\xi+p:[q+p]\to[n+p]$ is
$$
(\xi+p)(i)=\left\{\begin{array}{lll}i&\text{if}&0\leq i<p,\\[4pt]\xi(i-p)+p&\text{if}&p\leq i\leq q+p.\end{array}\right.
$$

Note that, for any $\xi:[q]\to [n]$, the square below in the simplicial category commutes.
$$
\xymatrix@R=16pt@C=24pt{[q]\ar[r]^{\textstyle \xi}\ar[d]_{\textstyle
\delta_0}&[n]\ar[d]^{\textstyle \delta_0}
\\ [q+\!1]\ar[r]^{\textstyle \xi\!+\!1}&[n+1]}
$$
Hence, for any $\z:[n]\rightsquigarrow \C$, the 2-functor $\xi^*:\comma{[n]}F\to
\comma{[q]}F$ maps the homotopy fibre 2-category $\comma{\z}F$ into $\comma{\z\xi}F$. As a crucial result in our discussion for proving Theorem \ref{B}, we have the following:

\begin{lemma}\label{lemma22} Under the hypothesis in Theorem $\ref{B}$, for any given normal lax functor  ${\z:[n]\rightsquigarrow \C}$ and
map  in the simplicial category $\xi:[q]\to [n]$,  the $2$-functor
$$\xi^*:\comma{\z}F\to \comma{\z\xi}F$$ induces a homotopy
equivalence on classifying spaces
$\class(\comma{\z}F)\simeq \class(\comma{\z\xi}F).$
\end{lemma}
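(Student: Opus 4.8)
The plan is to reduce the $2$-functor $\xi^*$ to a $2$-functor of the form $w^*$, for a suitably chosen $1$-cell $w$ of $\C$, to which the hypothesis of Theorem \ref{B} applies directly, using as a bridge the homotopy equivalences furnished by Lemma \ref{lemma2}(i). This avoids any induction on a factorization of $\xi$ into cofaces and codegeneracies.

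First I would set $w = z_{0,\xi(0)}\colon z_{\xi(0)}\to z_0$, the structure $1$-cell of the lax functor $\z$ running from its vertex $\xi(0)$ to its vertex $0$ (so $w = 1_{z_0}$ when $\xi(0)=0$). By the hypothesis of Theorem \ref{B}, the induced $2$-functor $w^*\colon \comma{z_0}F\to\comma{z_{\xi(0)}}F$ is a homotopy equivalence on classifying spaces. Next, I would recall from Lemma \ref{lemma2}(i) and its proof the $2$-functor $\Gamma\colon\comma{z_0}F\to\comma{\z}F$ together with its retraction $\Theta\colon\comma{\z}F\to\comma{z_0}F$ (for which $\Theta\Gamma=1$ and there is a $2$-natural transformation $\Gamma\Theta\Rightarrow 1$, so $\class\Gamma$ and $\class\Theta$ are mutually inverse homotopy equivalences); and, applying the same lemma to the normal lax functor $\z\xi\colon[q]\rightsquigarrow\C$, whose initial vertex is $(\z\xi)_0 = z_{\xi(0)}$, I would record the retraction $\Theta'\colon\comma{\z\xi}F\to\comma{z_{\xi(0)}}F$, which likewise induces a homotopy equivalence on classifying spaces.

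The key step is then to prove the strict identity of $2$-functors
$$\Theta'\circ\xi^*\circ\Gamma \;=\; w^*\colon\ \comma{z_0}F\longrightarrow\comma{z_{\xi(0)}}F.$$
This is a direct unwinding of the definitions. On an object $(x,v)$, $\Gamma$ produces $(x,v^\z)$, then $\xi^*$ produces $(x,v^\z(\xi+1))$, and $\Theta'$ extracts its $(0,1)$-edge; since $\xi+1$ carries the edge $0\to 1$ of $[q+1]$ onto the edge $0\to\xi(0)+1$ of $[n+1]$ and $v^\z_{0,\xi(0)+1}=v\circ z_{0,\xi(0)}=v\circ w$, this edge equals $v\circ w$, that is, $w^*(x,v)$. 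Running the same bookkeeping through a morphism $(u,\beta)$ of $\comma{z_0}F$ — for which $\Gamma$ installs the $2$-cells $\beta\circ 1_{z_{0,i}}$, $\xi^*$ reindexes them along $\xi+2$, and $\Theta'$ retains the $(0,1,2)$ one — gives $\Theta'\xi^*\Gamma(u,\beta)=(u,\beta\circ 1_w)=w^*(u,\beta)$; and all four $2$-functors act as the identity on deformations.

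Finally, from $\class\Theta'\circ\class\xi^*\circ\class\Gamma=\class w^*$, with $\class\Theta'$, $\class\Gamma$ and $\class w^*$ all homotopy equivalences, the $2$-out-of-$3$ property forces $\class\xi^*$ to be a homotopy equivalence, which is the assertion. The only point requiring care is the index bookkeeping in the key identity: one must keep the conventions for $\xi+1$ and $\xi+2$ and for precomposing the lax functors $v^\z,\,y^\z$ with simplicial operators straight, and check that the degenerate cases — where $\xi$ is not injective, so some vertices $z_{\xi(i)}$ coincide and the relevant $z_{i,j}$, $z_{i,j,k}$ become identities — are absorbed by the normalization equations. I do not anticipate a genuine obstacle beyond this, since the hypothesis of Theorem \ref{B} is invoked precisely where it is indispensable, namely to make $w^*$ a homotopy equivalence, while everything else is supplied by Lemma \ref{lemma2}.
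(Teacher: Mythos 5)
Your proposal is correct, and it is a close variant of the paper's own argument: both reduce $\xi^*$ to the $2$-functor $z_{0,\xi(0)}^*$, to which the hypothesis of Theorem \ref{B} applies, using the $2$-functors of Lemma \ref{lemma2}(i). The difference is in the mechanics of the reduction. The paper forms the square with the two sections $\Gamma_{\z}$ and $\Gamma_{\z\xi}$, which commutes only up to an explicit $2$-natural transformation $\Gamma_{\z\xi}\,z_{0,\xi(0)}^*\Rightarrow \xi^*\,\Gamma_{\z}$ (its components involve the constraint cells $z_{0,\xi(0),\xi(i)}$), and then invokes Lemma \ref{trans} to get a homotopy-commutative square on classifying spaces before applying two-out-of-three. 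You instead compose with the retraction $\Theta'$ of $\Gamma_{\z\xi}$ and obtain the \emph{strict} identity $\Theta'\,\xi^*\,\Gamma_{\z}=z_{0,\xi(0)}^*$; this works precisely because $\Theta'$ forgets everything except the $(0,1)$-edge, where the discrepancy measured by $z_{0,\xi(0),\xi(i)}$ disappears by normalization, and your index bookkeeping is right: $(\xi+1)(0)=0$, $(\xi+1)(1)=\xi(0)+1$ give $(v^{\z}(\xi+1))_{0,1}=v\circ z_{0,\xi(0)}$, and $(\xi+2)(2)=\xi(0)+2$ gives $(y^{\z}(\xi+2))_{0,1,2}=\beta\circ 1_{z_{0,\xi(0)}}$. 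What your route buys is that no comparison $2$-cell and no appeal to Lemma \ref{trans} are needed at this step (they are still used inside Lemma \ref{lemma2} to know that $\class\Gamma$ and $\class\Theta'$ are mutually inverse homotopy equivalences); the conclusion by two-out-of-three is then the same as in the paper.
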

\proof We have the square of 2-functors
$$
\xymatrix{\comma{z_0}F\ar[r]^{\textstyle \Gamma_\z}\ar[d]_{\textstyle z_{0,\xi(0)}^*}&
\comma{\z}F\ar[d]^{\textstyle \xi^*}\\
\comma{z_{\xi(0)}}F\ar[r]^{\textstyle
\Gamma_{\z\xi}}&\comma{\z\xi}F,}
$$
where the $\Gamma's$ are those 2-functors in Lemma \ref{lemma2} (i) corresponding to $\z$ and $\z\xi$ respectively. The two composite 2-functors in the square are related by a 2-natural transformation
$\Gamma_{\z\xi}\,z_{0,\xi(0)}^*\Rightarrow \xi^*\,\Gamma_{\z}$,
whose component at an object $(x,v)$ of $\comma{z_0}F$ is the
morphism $$(1_x,\y):(x,(v\circ z_{0,\xi(0)})^{\z\xi})\to
(x,v^\z(\xi+1))$$ in $\comma{\z\xi}F$ with
$$
y_{0,1,i+2}=1_v\circ z_{0,\xi(0),\xi(i)}:v\circ z_{0,\xi(0)}\circ
z_{\xi(0),\xi(i)}\Longrightarrow v\circ z_{0,\xi(i)},
$$
for $0\leq i\leq q$. Hence, by Lemma \ref{trans}, the induced square
on classifying spaces
$$
\xymatrix{\class(\comma{z_0}F)\ar[r]^{\textstyle \class\Gamma_\z}\ar[d]_{\textstyle \class z_{0,\xi(0)}^*}&
\class(\comma{\z}F)\ar[d]^{\textstyle \class \xi^*}\\
\class(\comma{z_{\xi(0)}}F)\ar[r]^{\textstyle
\class\Gamma_{\z\xi}}&\class(\comma{\z\xi}F)}
$$
is homotopy commutative; that is, there is a homotopy
$\class\Gamma_{\z\xi}\,\class z_{0,\xi(0)}^*\simeq
\class\xi^*\,\class\Gamma_{\z}$. Since both maps $\class\Gamma_\z$
and $\class\Gamma_{\z\xi}$ are homotopy equivalences by Lemma
\ref{lemma2} above, and, by hypothesis,  $\class z_{0,\xi(0)}^*$ is
also a homotopy equivalence, the result follows. \qed

The next lemma plays, in our discussion,  the role of the relevant
Lemma \cite[p. 90]{quillen} in Quillen's proof of his Theorem B
for functors. The simplicial 2-category (\ref{ns}) has a classifying
space $\class(\comma{[-]}F)$; namely, the geometric realization of
the simplicial space $[q]\mapsto \class(\comma{[q]}F)$ obtained by
replacing each 2-category $\comma{[q]}F$ by its classifying space,
or
$$\xymatrix{\class(\comma{[-]}F)=|\diag\gner(\comma{[-]}F)|},$$
where $\gner(\comma{[-]}F):([p],[q])\mapsto \gner_p(\comma{[q]}F)$
is the bisimplicial set obtained by applying the geometric nerve
functor to the simplicial 2-category $\comma{[-]}F$. Furthermore,
regarding the simplicial set $\gner\C$, geometric nerve of the
2-category $\C$, as a  simplicial discrete 2-category whose
morphisms and deformations are all identities, a simplicial
2-functor
\begin{equation}\label{2psi}
\Psi:\comma{[-]}F\longrightarrow \Delta\C, \hspace{0.6cm}(x,\lv)\to
\lv\delta_0,
\end{equation}
is defined by the 2-functors (\ref{npsi}). This 2-functor yields an
induced map on classifying spaces $\class(\comma{[-]}F)\to
\class\C$, and we have the following:

\begin{lemma}\label{car}
For any object $z_0$ of $\C$, the induced square by the inclusion of
$\comma{z_0}F$ into $\comma{[-]}F$ and the simplicial $2$-functor {\em
(\ref{2psi})}
\begin{equation}\label{cars}
\xymatrix{\class(\comma{z_0}F)\ar[r]\ar[d]&\class(\comma{[-]}F)\ar[d]^{\textstyle \class\Psi}\\
pt\ar[r]^{\textstyle z_0}&\class\C,}
\end{equation}
where  $pt$ is the one-point space, is  homotopy cartesian. That is,
$\class(\comma{z_0}F)$ is  homotopy equivalent to the homotopy fibre
of $\class\Psi$, relative to the base point $z_0$ of $\class\C$.
\end{lemma}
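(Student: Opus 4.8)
The plan is to realize the square $(\ref{cars})$, up to the homotopy equivalence $\class\C\simeq|\gner\C|$ of Theorem \ref{teo1}, as the square obtained by applying $|\diag(-)|$ to a square of bisimplicial sets built from the bisimplicial map $\Psi\colon E\to\gner\C$, where $E=\gner(\comma{[-]}F)$ is the bisimplicial set $([p],[q])\mapsto\gner_p(\comma{[q]}F)$, so that $|\diag E|=\class(\comma{[-]}F)$, and $\gner\C$ is regarded as a bisimplicial set constant in the first variable, so that $\diag$ of it is $\gner\C$ itself and $|\diag\gner\C|=|\gner\C|\simeq\class\C$; here $\Psi$ is induced by the simplicial $2$-functor $(\ref{2psi})$ and the chosen base point $z_0$ is a vertex of $\gner_0\C=\mathrm{Ob}\,\C$. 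I would then prove that the homotopy fibre of $|\diag\Psi|$ over $z_0$ is $\class(\comma{z_0}F)$, hit by the canonical map, following the template of \cite[p. 90]{quillen} and \cite[\S IV, 5.2]{g-j}.

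The first step is to analyse $\Psi$ one ``$q$-level'' at a time. Since the geometric nerve takes coproducts of $2$-categories to disjoint unions of simplicial sets and $\comma{[q]}F=\bigsqcup_{\z\colon[q]\rightsquigarrow\C}\comma{\z}F$, the $q$-th simplicial map $E_{\bullet,q}=\bigsqcup_{\z}\gner(\comma{\z}F)\to(\gner\C)_{\bullet,q}$ is the projection onto the set of connected components, its target being the discrete simplicial set $\gner_q\C$, so that its fibre over $\z$ is exactly $\gner(\comma{\z}F)$. Consequently the fibre of $\Psi$ over $z_0$ is the sub-bisimplicial set $\widetilde{E}$ with $\widetilde{E}_{\bullet,q}=\gner(\comma{\z^q}F)$, for $\z^q\colon[q]\rightsquigarrow\C$ the totally degenerate simplex at $z_0$; in particular $\widetilde{E}_{\bullet,0}=\gner(\comma{z_0}F)$, and the map $\class(\comma{z_0}F)\to\class(\comma{[-]}F)$ of $(\ref{cars})$ is, under Theorem \ref{teo1}, the composite $|\widetilde{E}_{\bullet,0}|\to|\diag\widetilde{E}|\to|\diag E|$. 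Next I would identify $|\diag\widetilde{E}|$ with $\class(\comma{z_0}F)$. The unique surjections $[q]\to[0]$ induce $2$-functors $\comma{z_0}F\to\comma{\z^q}F$ of exactly the kind treated in Lemma \ref{lemma22} (take $\xi\colon[q]\to[0]$ and $\z=z_0$, so $\z\xi=\z^q$); hence each map $\gner(\comma{z_0}F)\to\widetilde{E}_{\bullet,q}$ is a weak equivalence by Lemma \ref{lemma22} and Theorem \ref{teo1}, and these are the $q$-components of a bisimplicial map, from the bisimplicial set constant in the second variable at $\gner(\comma{z_0}F)$ into $\widetilde{E}$, whose diagonal $\gner(\comma{z_0}F)\to\diag\widetilde{E}$ is then a weak equivalence by Fact \ref{fact}. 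Combined with $|\gner(\comma{z_0}F)|\simeq\class(\comma{z_0}F)$ from Theorem \ref{teo1}, this gives $|\diag\widetilde{E}|\simeq\class(\comma{z_0}F)$, compatibly with the composite above.

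The only substantial point that remains is that $|\diag\widetilde{E}|\to|\diag E|\to|\gner\C|$, with second map $|\diag\Psi|$, is a homotopy fibre sequence over $z_0$, equivalently that $(\ref{cars})$ is homotopy cartesian. For this I would invoke the bisimplicial fibre lemma underlying Quillen's proof of Theorem B (\cite[p. 90]{quillen}, \cite[\S IV, 5.2]{g-j}): $\Psi$ is, in each $q$-level, the component projection onto the discrete simplicial set $\gner_q\C$, hence a fibration up to weak equivalence whose homotopy fibre over $\z$ is $\gner(\comma{\z}F)$; and for every simplicial operator $\xi$ in the $q$-direction the induced comparison of fibres $\gner(\comma{\z}F)\to\gner(\comma{\z\xi}F)$ is a weak equivalence; granted these, taking $|\diag(-)|$ turns the levelwise fibre sequences into a homotopy fibre sequence over any vertex. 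The first condition is built into the construction; the second is precisely Lemma \ref{lemma22} together with Theorem \ref{teo1}, and it is the only place where the hypothesis of Theorem \ref{B} enters. So the genuinely hard work is already done in Lemma \ref{lemma22}, and the present lemma is the bookkeeping that upgrades the levelwise picture to a statement about diagonals; the only technical wrinkle I anticipate is that $\Psi$ is not a strict Kan fibration levelwise (the fibres $\gner(\comma{\z}F)$ need not be Kan complexes), so to apply the cited lemma literally one first replaces $E\to\gner\C$ by a degreewise Kan fibration functorially in $q$, which does not change the homotopy fibres.
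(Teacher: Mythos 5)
Your first two steps are sound and in fact coincide with the easier half of the paper's own argument: the levelwise decomposition $\gner_p(\comma{[q]}F)=\bigsqcup_{\z:[q]\rightsquigarrow\C}\gner_p(\comma{\z}F)$, the identification of the strict fibre $\widetilde{E}$ of $\Delta\Psi$ over $z_0$ as the bisimplicial set with columns $\gner(\comma{\z^q}F)$, and the comparison of $\widetilde{E}$ with the constant bisimplicial set at $\gner(\comma{z_0}F)$ via the maps $\xi^*$ for $\xi:[q]\to[0]$, which are weak equivalences by Lemma \ref{lemma22} and hence give a weak equivalence on diagonals by Fact \ref{fact}. This is exactly the paper's point (ii) (the $n=0$ instance of its comparison map $\gamma$).

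The gap is in your third step, which is where all the substance of the lemma lies. The ``bisimplicial fibre lemma'' you invoke is stated in the sources you cite only for diagrams of simplicial sets indexed by a small category, with base the nerve of that category (Quillen's Lemma, p.~90 of \cite{quillen}; \cite[IV, Lemma 5.7]{g-j}); indeed the paper itself remarks that this form suffices only when $\C$ is a category. Here the base is $\gner\C$, the geometric nerve of a genuine $2$-category, which is not the nerve of a category, and the ``fibres'' $\gner(\comma{\z}F)$ are indexed by its simplices; no off-the-shelf statement in those references covers this situation, and passing from ``levelwise fibre data with weak-equivalence transition maps'' to ``homotopy cartesian after $\diag$'' is false for general bisimplicial maps (this is the point of the $\pi_*$-Kan condition of Bousfield--Friedlander \cite{bousfield-kan,g-j}). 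So either you must verify such extra hypotheses, or you must prove the general-base version directly --- and the latter is precisely what the paper does: it factors $z_0:\Delta[0]\to\gner\C$ as a trivial cofibration followed by a Kan fibration via the small object argument, reduces to showing that pulling $\gner(\comma{[-]}F)$ back along each $\Lambda^k[n]\hookrightarrow\Delta[n]\overset{\z}\to\gner\C$ induces a weak equivalence on diagonals, and proves this by comparing the pullbacks with the products $\Lambda^k[n]\times\gner(\comma{z_0}F)\to\Delta[n]\times\gner(\comma{z_0}F)$ through the simplicial $2$-functor built from $\Gamma$ and $\xi^*$, using Lemmas \ref{lemma2}, \ref{lemma22} and Fact \ref{fact}. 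Your anticipated ``technical wrinkle'' is also aimed at the wrong place: levelwise, over a discrete base, the strict fibres already compute the homotopy fibres (no fibrant replacement is needed there); the genuine obstruction is the diagonalization step, and neither a verification of the $\pi_*$-Kan condition for $\gner(\comma{[-]}F)$ nor a direct horn-by-horn argument appears in your proposal. As written, the heart of Lemma \ref{car} is therefore asserted rather than proved.
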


\proof  By Theorem \ref{teo1}, it suffices to prove that the diagram
of bisimplicial sets
$$
\xymatrix{\Delta(\comma{z_0}F)\ar[r]\ar[d]&\gner(\comma{[-]}F)\ar[d]^{\textstyle \Delta\Psi}\\
\Delta[0]\ar[r]^{\textstyle z_0}&\Delta\C}$$ induces on
diagonals a homotopy cartesian square of simplicial maps. And, to do
that, we are going to prove the following two facts:
\begin{enumerate}
\item[(i)] The pullback square of bisimplicial sets
$$\xymatrix{\Delta[0]\times_{\Delta\C}\gner(\comma{[-]}F)\ar[r]
\ar[d]&\gner(\comma{[-]}F)
\frac{}{}\ar[d]^{\textstyle \Delta\Psi}\\
\Delta[0]\ar[r]^{\textstyle z_0}&\Delta\C,}
$$
induces on diagonals a homotopy cartesian square of simplicial maps.

\item[(ii)] The induced map on diagonals by the bisimplicial map
\begin{equation}\label{corner}\xymatrix{\Delta(\comma{z_0}F)\to \Delta[0]\times_{\Delta\C}
\gner(\comma{[-]}F)}\end{equation} is a weak homotopy equivalence.
\end{enumerate}

Recall that the homotopy fibre of $\diag\Delta\Psi$ at $z_0$ is the
pullback
$$\xymatrix{Y\times_{\gner \C}\diag \gner(\comma{[-]}F),}$$ where
$\gner[0]\to Y\to\gner\C$ is a (any) factorization of
$\gner[0]\overset{z_0}\to \gner\C$ into a trivial cofibration (=
injective weak equivalence) followed by a Kan fibration. By using
the ``small object argument" \cite{quillen67,g-j} to find  such a
factorization, the trivial cofibration $\gner[0]\to Y$ is a colimit
of a sequence of pushouts of coproducts of simplicial inclusions
$\Lambda^{\!k}[n]\hookrightarrow \gner[n]$, of $k^{\text{th}}$-horn
subcomplexes of standard $n$-simplex simplicial complexes. Since
pullbacks along $\diag \Delta\Psi$ commute with colimits in the
comma category of simplicial sets over $\gner \C$, to prove (i) it
is sufficient to show that for each composite
$$
\Lambda^{\!k}[n]\hookrightarrow \gner[n]\overset{\z}\to \gner\C,
$$
with $z_0$ the given, the bisimplicial map obtained by pullingback
along $\Delta\Psi$
\begin{equation}\label{pub}
\xymatrix{\Lambda^{\!k}[n]\times_{\gner \C}\gner(\comma{[-]}F)
\longrightarrow \gner[n]\times_{\gner \C}\gner(\comma{[-]}F)}
\end{equation}
induces a weak homotopy equivalence on diagonals.

Fix a simplicial map $\z:\gner[n]\to  \gner\C$, that is, a normal
lax functor $\z:[n]\rightsquigarrow \C$.

The pullback square of bisimplicial sets
$$\xymatrix{\Delta[n]\times_{\Delta\C}\gner(\comma{[-]}F)\ar[r]
\ar[d]&\gner(\comma{[-]}F)
\frac{}{}\ar[d]^{\textstyle \Delta\Psi}\\
\Delta[n]\ar[r]^{\textstyle \z}&\Delta\C,}
$$
is that induced on geometric nerves by the pullback of simplicial
2-categories
$$\xymatrix{\Delta[n]\times_{\Delta\C}\comma{[-]}F\ar[r]
\ar[d]&\comma{[-]}F
\frac{}{}\ar[d]^{\textstyle \Psi}\\
\Delta[n]\ar[r]^{\textstyle \z}&\Delta\C,}
$$
where
$$\Delta[n]\times_{\Delta\C}\comma{[-]}F=\bigsqcup_{q}\bigsqcup_{[q]\overset{\xi}\to
[n]}\comma{\z\xi}F,
$$
is the simplicial 2-subcategory of $\comma{[-]}F$ generated by the
2-subcategory $\comma{\z}F$ of $\comma{[n]}F$.

On considering the product simplicial 2-category
$$
\Delta[n]\times
\comma{z_0}F=\bigsqcup_{q}\bigsqcup_{[q]\overset{\xi}\to
[n]}\comma{z_0}F,
$$
we have a simplicial 2-functor
\begin{equation}\label{gamma}
\gamma:\Delta[n]\times \comma{z_0}F\longrightarrow
\Delta[n]\times_{\Delta\C}\comma{[-]}F,
\end{equation}
$$\gamma=\bigsqcup_q\bigsqcup_{[q]\overset{\xi}\to [n]}\Big(\xymatrix@C=14pt{\comma{z_0}F
\ar[r]^{\Gamma}&\comma{\z}F\ar[r]^{\xi^*}&\comma{\z\xi}F}\Big),$$
where $\Gamma$ is the 2-functor (\ref{gamz}). Since, for any $q\geq
0$, the map induced by $\gamma_q$ on geometric nerves
$$
\bigsqcup_{[q]\overset{\xi}\to
[n]}\Big(\xymatrix@C=14pt{\gner(\comma{z_0}F)
\ar[r]^{\Delta\Gamma}&\gner(\comma{\z}F)\ar[r]^{\Delta\xi^*}&\gner(\comma{\z\xi}F)}\Big)
$$
is a coproduct of weak homotopy equivalences, by Lemmas \ref{lemma2}
and \ref{lemma22}, it follows that  the  simplicial map induced on
diagonals
\begin{equation}\label{dgamma}
\Delta\gamma:\gner[n]\times \gner(\comma{z_0}F)\longrightarrow
\diag\left(\gner[n]\times_{\gner \C}\Delta(\comma{[-]}F)\right)
\end{equation}
is a weak homotopy equivalence as well, by Fact \ref{fact}.

Similarly, the bisimplicial set $\Lambda^{\!k}[n]\times_{\gner
\C}\gner(\comma{[-]}F)$ is the geometric nerve of the simplicial
2-category
$$\Lambda^{\!k}[n]\times_{\Delta\C}\comma{[-]}F=\bigsqcup_{q\geq
0}\hspace{-0.3cm}\bigsqcup_{\scriptsize
\begin{array}{c}[q]\overset{\xi}\to [n]\\ \exists j\neq k~|~ \beta \text{ misses }
j\end{array}}\hspace{-0.6cm}\comma{\z\xi}F
$$
and, by taking into account the product simplicial 2-category
$$
\Lambda^{\!k}[n]\times\comma{z_0}F=\bigsqcup_{q\geq
0}\hspace{-0.3cm}\bigsqcup_{\scriptsize
\begin{array}{c}[q]\overset{\xi}\to [n]\\ \exists j\neq k~|~ \beta \text{ misses }
j\end{array}}\hspace{-0.6cm}\comma{z_0}F,
$$
the simplicial 2-functor $\gamma$ above (\ref{gamma}) restricts to
a simplicial 2-functor
$$\gamma':\Lambda^{\!k}[n]\times \comma{z_0}F\longrightarrow
\Lambda^{\!k}[n]\times_{\Delta\C}\comma{[-]}F,
$$
such that the  map induced on diagonals  $$
\Delta\gamma':\Lambda^k[n]\times \gner(\comma{z_0}F)\longrightarrow
\diag\left(\Lambda^k[n]\times_{\gner \C}\Delta(\comma{[-]}F)\right)
$$
is a also a weak homotopy equivalence of simplicial sets.

Since the square of simplicial maps
$$
\xymatrix{\Lambda^k[n]\times \gner(\comma{z_0}F)\ar[d]_{\textstyle
\Delta\gamma'}\ar[r]& \gner[n]\times
\gner(\comma{z_0}F) \ar[d]^{\textstyle \Delta\gamma}\\
\diag(\Lambda^k[n]\times_{\gner
\C}\Delta(\comma{[-]}F))\ar[r]&\diag(\gner[n]\times_{\gner
\C}\Delta(\comma{[-]}F))}
$$
is commutative, the  vertical maps are weak homotopy
equivalences, and the  top map is the trivial cofibration   product of the inclusion $\Lambda^{\!k}[n]\to \Delta[n]$ with the identity map on $\Delta(\comma{z_0}F)$, it follows that the bottom map, that is, that induced on diagonals by (\ref{pub}),
 is a weak homotopy equivalence of simplicial sets, as claimed. This proves (i), but actually we have also shown (ii) since the induced on diagonals by the map (\ref{corner}) is precisely the weak equivalence (\ref{dgamma}) for the case $n=0$.\qed

With the following lemma we will be ready to complete the proof of Theorem \ref{B}. For any $q\geq 0$, forgetting in the second component gives a 2-functor
$$
\Phi:\comma{[q]}{F}\longrightarrow \B,
$$
 $$ \xymatrix @C=7pt@R=10pt{(x,\lv) \ar@/^1pc/[rr]^{\textstyle (u,\y)} \ar@/_1pc/[rr]_-{\textstyle (u',\y')}
& {\Downarrow \alpha} & (x',\lv')}
\xymatrix{\ar@{|->}[r]^{\textstyle \Phi}&} \xymatrix @C=5pt{x
\ar@/^0.7pc/[rr]^{\textstyle u} \ar@/_0.7pc/[rr]_-{\textstyle u'} &
{\Downarrow \alpha} & x'}
$$
and we have an augmented simplicial 2-category
\begin{equation}\label{nphi}\comma{[-]}F \overset{\textstyle \Phi}\longrightarrow \B.
\end{equation}
\begin{lemma}\label{lemphi}
The simplicial $2$-functor $(\ref{nphi})$ induces a homotopy equivalence on classifying spaces,  $\class (\comma{[-]}F)\simeq \class\B$.
\end{lemma}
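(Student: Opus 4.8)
The plan is to run a bisimplicial argument, based on Fact~\ref{fact}, which exhibits the simplicial $2$-category $\comma{[-]}F$ as, in the $q$-direction, a ``d\'ecalage'' of the geometric nerve $\gner\C$ relative to $\B$. Applying the geometric nerve construction to the augmented simplicial $2$-functor $(\ref{nphi})$ gives a bisimplicial map $\gner\Phi\colon\gner(\comma{[-]}F)\to\gner\B$, where $\gner\B$ is regarded as a bisimplicial set constant in the second variable, so that $\diag\gner\B=\gner\B$. Since $\class(\comma{[-]}F)=|\diag\gner(\comma{[-]}F)|$ and $|\gner\B|\simeq\class\B$ by Theorem~\ref{teo1}, it suffices, by Fact~\ref{fact}, to prove that for every fixed $p\geq 0$ the map of simplicial sets
$$(\gner\Phi)_{p,*}\colon\ \big(\,[q]\mapsto\gner_p(\comma{[q]}F)\,\big)\longrightarrow\gner_p\B$$
is a weak homotopy equivalence; for then $\diag\gner\Phi$ is a weak equivalence and the claim follows upon realizing, the resulting equivalence $\class(\comma{[-]}F)\simeq\class\B$ being the one induced by $\Phi$.

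Fix $p\geq 0$. As $\gner_p\B$ is discrete in the $q$-variable, $(\gner\Phi)_{p,*}$ is the coproduct, indexed by the normal lax functors $\mathbf{a}\colon[p]\rightsquigarrow\B$, of the maps from the fibres $\Phi^{-1}(\mathbf{a})$ to the one-point simplicial set, so it is enough to see that each simplicial set $\Phi^{-1}(\mathbf{a})$ is weakly contractible. Here lies the combinatorial core of the argument: unwinding the definitions of $\comma{[q]}F$ and of its geometric nerve, a $q$-simplex of $\Phi^{-1}(\mathbf{a})$ --- that is, a normal lax functor $\mathbf{w}=\{w_i,w_{i,j},w_{i,j,k}\}\colon[p]\rightsquigarrow\comma{[q]}F$ with $\Phi\mathbf{w}=\mathbf{a}$ --- is precisely the datum of a normal lax functor
$$\mathbf{Y}\colon\ [p]\star[q]=[p+q+1]\ \rightsquigarrow\ \C$$
whose restriction along the front inclusion $[p]\hookrightarrow[p]\star[q]$ is $F\mathbf{a}$; under this correspondence the second component of $w_i$ is the restriction of $\mathbf{Y}$ to the subset $\{i\}\cup[q]\cong[q+1]$, the second component of $w_{i,j}$ is the restriction of $\mathbf{Y}$ to $\{i,j\}\cup[q]\cong[q+2]$, and the deformation $w_{i,j,k}$ contributes $Fa_{i,j,k}$, while the coherence and deformation conditions built into the morphisms and deformations of $\comma{[q]}F$ amount exactly to the cocycle condition for $\mathbf{Y}$; conversely every such $\mathbf{Y}$ arises this way. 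Moreover the simplicial operators in $q$ act on $\mathbf{Y}$ by precomposition with the maps $[p]\star\xi\colon[p]\star[q']\to[p]\star[q]$ induced by $\xi\colon[q']\to[q]$ in $\Delta$, so that inside $\gner\C$ the $q$-direction faces and degeneracies of $\Phi^{-1}(\mathbf{a})$ are $\delta_{p+1+i}$ and $\sigma_{p+1+i}$. This identification, elementary but notationally heavy, is the main obstacle of the proof; everything else is standard.

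Granting it, $\Phi^{-1}(\mathbf{a})$ is the ``$F\mathbf{a}$-based d\'ecalage'' of $\gner\C$, i.e.\ the simplicial set $[q]\mapsto\{\mathbf{Y}\in\gner_{p+q+1}\C:\ \mathbf{Y}|_{[p]}=F\mathbf{a}\}$ with the operators just described, and such a simplicial set is contractible because it carries an extra degeneracy $s_{-1}$, sending $\mathbf{Y}\in\gner_{p+q+1}\C$ to $\mathbf{Y}\sigma_p\in\gner_{p+q+2}\C$, where $\sigma_p\colon[p+q+2]\to[p+q+1]$ is the degeneracy doubling the junction vertex $p$. Indeed $\sigma_p$ restricts to the identity on $\{0,\dots,p\}$, whence $(\mathbf{Y}\sigma_p)|_{[p]}=F\mathbf{a}$; and $\sigma_p\delta_{p+1}=\mathrm{id}$ gives $d_0s_{-1}=\mathrm{id}$, the remaining extra-degeneracy identities following at once from the simplicial identities, so that the augmentation of $\Phi^{-1}(\mathbf{a})$ to a point is a simplicial homotopy equivalence. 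Therefore each $\Phi^{-1}(\mathbf{a})$ is weakly contractible, $(\gner\Phi)_{p,*}$ is a coproduct of weak homotopy equivalences and hence itself a weak homotopy equivalence for every $p$, and by Fact~\ref{fact} $\diag\gner\Phi$ is a weak homotopy equivalence; realizing yields $\class(\comma{[-]}F)\simeq\class\B$, induced by $\Phi$, as claimed.
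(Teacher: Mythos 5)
Your proof is correct, and its overall skeleton coincides with the paper's: apply the geometric nerve, use Theorem \ref{teo1} and Fact \ref{fact} to reduce to showing that, for each fixed $p$, the augmentation $[q]\mapsto \gner_p(\comma{[q]}F)\to \gner_p\B$ is a weak equivalence, and then split this map over the discrete simplicial set $\gner_p\B$ into its fibres. Where you genuinely diverge is in how each fibre is contracted. The paper identifies the fibre over $\x:[p]\rightsquigarrow\B$ with the geometric nerve of the comma $2$-category $\comma{\C}{F\x}$ and then simply quotes Lemma \ref{lemma2}(ii), whose proof rests on the oplax-transformation contraction of Lemma \ref{cont} together with Lemma \ref{trans}; you instead identify the fibre with the set of normal lax functors $[p]\star[q]=[p+q+1]\rightsquigarrow\C$ extending $F\x$ along the front inclusion and contract it by the explicit extra degeneracy $\mathbf{Y}\mapsto\mathbf{Y}\sigma_p$. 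The two identifications are really the same unwinding (the paper hides it in the word ``precisely''), and your extra degeneracy is the simplicial shadow of the transformation $\mathrm{Ct}\Rightarrow 1$ used in Lemma \ref{cont}; what your route buys is self-containedness at this step --- no appeal to Lemmas \ref{cont}, \ref{trans} or \ref{lemma2}(ii) --- at the price of carrying the join combinatorics explicitly, including the verification (which you rightly flag as the heavy part) that the compatibility conditions built into morphisms and deformations of $\comma{[q]}F$ match the cocycle conditions of $\mathbf{Y}$; your check of the identities $\sigma_p\delta_{p+1}=\mathrm{id}$, $\sigma_p\delta_{p+2+i}=\delta_{p+1+i}\sigma_p$, etc., is correct. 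One small point worth making explicit: the extra-degeneracy argument also needs the augmentation-level datum, i.e.\ the $0$-simplex $F\x\,\sigma_p$ of the fibre (in particular the fibres are nonempty); this is exactly your formula applied at level $-1$, so nothing is actually missing, but it deserves a sentence.
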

\proof By Theorem \ref{teo1}, it suffices to prove that the bisimplicial map $$\Delta\Phi:\Delta(\comma{[-]}F)\to\Delta\B$$ induces a weak equivalence $\diag\Delta\Phi:\diag\Delta(\comma{[-]}F)\to\Delta\B$. Now, for every $p\geq 0$, the augmentation
$$
\Delta_p\Phi:\Delta_p(\comma{[-]}F)\!=\!\bigsqcup_{q\geq 0}\!\Delta_p(\comma{[q]}F)\longrightarrow \Delta_p\B
$$
is actually a weak equivalence since, for any geometric simplex $\x:[p]\rightsquigarrow\B$ of $\B$, the fibre $(\Delta_p\Phi)^{-1}(\x)$ is precisely $\Delta(\comma{\C}F\x)$, the geometric nerve of the comma category $\comma{\C}F\x$, which, by Lemma \ref{lemma2} (ii), has a contractible classifying space; that is,
$$|\Delta_p(\comma{[-]}F)|=\bigsqcup_{\x:[p]\rightsquigarrow \B}\class(\comma{\C}F\x)\simeq \bigsqcup_{\x:[p]\rightsquigarrow \B}\hspace{-4pt}pt\simeq \Delta_p\B.$$

Therefore, by Fact \ref{fact}, the map induced on diagonals
$\diag\Delta\Phi$ is also a weak homotopy equivalence, as claimed.
\qed

\vspace{0.3cm}
We can now complete the proof of Theorem \ref{B}:

Consider the diagram of spaces
$$
\xymatrix@R=35pt{\class(\comma{z_0}F)\ar[r]\ar[d]_{\textstyle \class
F'}^{\textstyle \hspace{0.7cm}(a)}&\class (\comma{[-]}F)
\ar[d]_{\textstyle \class F'}^{\textstyle \hspace{0.7cm}(c)}\ar[r]^-{\textstyle \class \Phi}&\class\B
\ar[d]^{\textstyle \class F}\\
\class(\comma{z_0}\C)\ar[r]\ar[d]^{\textstyle
\hspace{0.7cm}(b)}&\class (\comma{[-]}1_\C) \ar[r]_-{\textstyle
\class\Phi}\ar[d]^{\textstyle \class \Psi}&\class\C
\\
pt\ar[r]^{z_0}&\class\C&}
$$
in which the simplicial 2-functor $F':\comma{[-]}F\to \comma{[-]}1_\C$ is  given by
$$
 \xymatrix @C=14pt{(x,\lv) \ar@/^1pc/[rr]^{\textstyle (u,\y)} \ar@/_1pc/[rr]_-{\textstyle (u',\y')}
& {\Downarrow\! \alpha} & (x',\lv')}
\xymatrix{\ar@{|->}[r]^{\textstyle F'}&} \xymatrix @C=1pc{(Fx,\lv)\
\ar@/^1pc/[rr]^{\textstyle (Fu,\y)} \ar@/_1pc/[rr]_-{\textstyle
(Fu',\y')} & {\Downarrow\! F\alpha} & (Fx',\lv')}.
$$

  By Lemma \ref{car}, both squares $(a)+(b)$ and $(b)$ are
homotopy cartesian. It follows that $(a)$ is homotopy cartesian.
Hence, $(a)+(c)=(\ref{hc})$ is also since  both maps $\class\Phi$
are homotopy equivalences by Lemma \ref{lemphi}, whence the theorem.
\qed

\vspace{0.3cm} The following corollary is a direct generalization of
\cite[Theorem A]{quillen}, and it was proved in \cite{b-c}:

\begin{corollary}
Let $F:\B\to \C$ be a $2$-functor between $2$-categories. If the classifying space $\class(\comma{z}F)$ is contractible
for every object $z$ of $\C$, then the induced map $\class F:\class \B\to \class\C$ is a homotopy
equivalence.\end{corollary}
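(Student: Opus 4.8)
The plan is to derive this as a straightforward consequence of Theorem \ref{B}. The first step is to observe that the hypothesis of Theorem \ref{B} holds automatically here: if $\class(\comma{z}F)$ is contractible for every object $z$ of $\C$, then for any morphism $z_1\to z_0$ in $\C$ the induced map $\class(\comma{z_0}F)\to \class(\comma{z_1}F)$ is a map between contractible spaces, and is therefore a homotopy equivalence. Hence Theorem \ref{B} applies and, for every object $z$ of $\C$, the square $(\ref{hc})$ is homotopy cartesian.

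Next I would exploit contractibility of the bottom-left corner of that square. By Lemma \ref{cont}, $\class(\comma{z}\C)$ is contractible, and the $2$-functor $\comma{z}\C\to\C$ sends the object $(z,1_z)$ to $z$; therefore the homotopy pullback of the cospan $\class\B\xrightarrow{\class F}\class\C\leftarrow\class(\comma{z}\C)$ is, up to homotopy equivalence, the homotopy fibre of $\class F$ over the point $z$. Since $(\ref{hc})$ is homotopy cartesian, this homotopy fibre is homotopy equivalent to $\class(\comma{z}F)$, which is contractible by hypothesis. Thus $\class F$ has a contractible homotopy fibre over every object $z$ of $\C$.

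Finally I would pass from objects to arbitrary points. By Theorem \ref{teo1} we may take $\class\C=|\gner\C|$, and every path component of the realization of a simplicial set contains a $0$-simplex, i.e.\ an object of $\C$; since homotopy fibres of a map over points lying in the same path component of the target are homotopy equivalent, it follows that $\class F$ has a contractible homotopy fibre over \emph{every} point of $\class\C$. A map whose homotopy fibre over every point of the target is contractible is a weak homotopy equivalence --- it is surjective on $\pi_0$ because its homotopy fibres are nonempty, injective on $\pi_0$ by lifting a path in the target to a path in a connected homotopy fibre, and an isomorphism on all higher homotopy groups by the long exact sequence of a fibration --- and, since classifying spaces of $2$-categories have the homotopy type of CW-complexes, $\class F$ is therefore a genuine homotopy equivalence. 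The only mildly delicate point is precisely this last bookkeeping: one cannot directly invoke the long exact sequence of Theorem \ref{B} at a base point $x\in F^{-1}z$, since $F^{-1}z$ may be empty even when $\comma{z}F$ is not, so it is essential to argue with the homotopy fibre over the \emph{point} $z$ rather than with a point of $\B$ lying over $z$.
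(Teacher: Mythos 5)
Your argument is correct, and it is precisely the route the paper intends: the paper states this corollary without proof (attributing it to the earlier paper \cite{b-c} and, in the introduction, calling it an ``immediate corollary'' of Theorem \ref{B}), namely that contractibility of all $\class(\comma{z}F)$ makes the hypothesis of Theorem \ref{B} automatic, and then the homotopy cartesian square (\ref{hc}) together with Lemma \ref{cont} identifies the (contractible) homotopy fibres of $\class F$. Your extra care in arguing with the homotopy fibre over the point $z$ rather than with the long exact sequence based at some $x\in F^{-1}z$ (which may not exist), and in passing from objects to arbitrary points before invoking Whitehead, fills in exactly the bookkeeping the paper leaves implicit.
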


\begin{example}{\em Recall that the classifying space of a strict monoidal category is the classifying space of the one-object 2-category it defines (see examples \ref{e1} and \ref{e2}).  Then, Theorem \ref{B} is applicable to strict monoidal functors (= 2-functors) between strict monoidal categories.

However, we should stress that the homotopy fibre 2-category of a
strict monoidal functor $F:(\m,\otimes)\to(\m',\otimes)$, at the
unique object of the 2-category defined by $(\m',\otimes)$, is not a
monoidal category but a genuine 2-category: Its objects are the
objects $x'\in\m'$, its morphisms are pairs $(x,u'):x'\to y'$ with
$x$ an object in $\m$ and $u':F(x)\otimes x'\to y'$ a morphism in
$\m'$, and its deformations $$ \xymatrix @C=8pt{x'
\ar@/^0.7pc/[rr]^{\textstyle (x,u')} \ar@/_0.7pc/[rr]_-{\textstyle
(y,v')} & {\Downarrow\! u } & y'}
$$
are those morphisms $u:x\to y$ in $\m$ such that the following triangle commutes
$$
\xymatrix@R=4pt{Fx\otimes x'\ar[dd]_{\textstyle Fu\otimes
1_{x'}}\ar[dr]^{\textstyle u'}&\\ &y'.\\Fy\otimes
x'\ar[ru]_{\textstyle v'}&}
$$

In \cite{b-c}, this 2-category was called the {\em homotopy fibre $2$-category} of the monoidal functor $F:(\m,\otimes)\to(\m',\otimes)$, and it was denoted by $\mathcal{K}_F$. Every object $z'$ of $\m'$ determines a 2-endofunctor $-\otimes z':\mathcal{K}_F\to \mathcal{K}_F$, defined by
$$
 \xymatrix @C=8pt{x' \ar@/^1pc/[rr]^{\textstyle (x,u')} \ar@/_1pc/[rr]_-{\textstyle (y,v')}
& {\Downarrow\! u} & y'} \xymatrix{\ar@{|->}[r]^{\textstyle -\otimes
z'}&} \xymatrix @C=8pt{x'\otimes z' \ar@/^1pc/[rr]^{\textstyle
(x,u'\otimes 1_{z'})} \ar@/_1pc/[rr]_-{\textstyle (y,v'\otimes
1_{z'})} & {\Downarrow\! u} & y'\otimes z'},
$$
and, by Theorem \ref{B}, whenever the induced maps $\class(-\otimes z'):\class\mathcal{K}_F\to\class\mathcal{K}_F$, for all $z'\in \mbox{Ob}\m'$, are
homotopy auto-equivalences,  there is a homotopy fibre sequence
$$\class\mathcal{K}_F\to\class(\m,\otimes)\to \class(\m',\otimes).$$

The reader interested in the study of classifying spaces of monoidal categories can find in the above fact a good reason to also be interested in the study of classifying spaces of 2-categories.}
\end{example}

\section{Homotopy fibre sequences induced by lax 2-diagrams}
Theorem \ref{B} can be applied to homotopy theory of lax functors ${\f\!:\!\C^{o}\!\rightsquigarrow \dcat}$, where $\C$ is any 2-category,
hence to  acting monoidal categories, through an enriched {\em
Grothendieck construction}
$$
\xymatrix{\int_\C \!\f,}
$$
that we explain in Definition \ref{gro} below.

Hereafter, $\dcat$ means the 2-category whose objects are 2-categories
$\A$, morphisms the 2-functors $F:\A\to \B$, and deformations $\xymatrix @C=0pc {\A  \ar@/^0.5pc/[rr]^{F} \ar@/_0.5pc/[rr]_{F'} & {}_{\Downarrow\alpha} &\B }$ the 2-natural transformations $\alpha:F\Rightarrow F'$,
as we recalled in the preliminary Section \ref{SP}.
To set some additional notations, let us briefly say that, in the 2-categorical structure of $\dcat$, a $2$-natural transformation $\alpha:F\Rightarrow F'$ composes vertically  with a $2$-natural transformation $\alpha':F'\Rightarrow F''$, that is,
in the category $\dcat(\A,\B)$,  yielding the 2-natural transformation $\alpha\circ\alpha':F \Rightarrow
F''$, whose component at an object $x$ of $\A$ is $(\alpha'\circ\alpha)_x= \alpha'_x\circ\alpha_x$, the horizontal
composition in $\B$ of the morphisms
$$Fx\overset{\alpha_x}\longrightarrow F'x\overset{\alpha'_x}\longrightarrow F''x.$$
Composition of 2-functors $\A\overset{F}\to\B\overset{G}\to\C$, which we are denoting by juxtaposition, that is,
$\A\overset{GF}\to\C$, is the function on objects of the horizontal composition functors $\dcat(\B,\C)\times
\dcat(\A,\B)\to\dcat(\A,\C)$,  which on morphisms of the hom-categories works as follows: for $2$-natural transformations
$\xymatrix @C=0pc {\A  \ar@/^0.5pc/[rr]^{F} \ar@/_0.5pc/[rr]_{F'} & {}_{\Downarrow\alpha} &\B }$ and $\xymatrix @C=0pc {\B  \ar@/^0.5pc/[rr]^{G} \ar@/_0.5pc/[rr]_{G'} & {}_{\Downarrow\beta} &\C }$,  then $\beta\,\alpha:G\,F\Rightarrow G'\,F'$ is the $2$-natural transformation with $(\beta\alpha)_x=\beta_{F'x}\circ G\alpha_x = G'\alpha_x\circ \beta_{Fx}$, the horizontal composite in $\B$ of
the morphisms
$$GFx\overset{G\alpha_x}\longrightarrow GF'x\overset{\beta_{F'x}}\longrightarrow G'F'x.$$

A {\em lax $2$-diagram of $2$-categories with the shape of a
$2$-category $\C$}, or normal lax 2-functor
$$\!\f:\C^{o}\rightsquigarrow \dcat ,$$ provides us with the
following data:
\begin{itemize}
\item a 2-category $\!\f_x$, for each object $x$ of $\C$,
\item a 2-functor $u^*:\!\f_y\to \!\f_x$, for each morphism $u:x\to y$ in $\C$,
\item a 2-natural transformation  $\xymatrix @C=0pc {\f_y  \ar@/^0.7pc/[rr]^{\textstyle u^*}
\ar@/_0.7pc/[rr]_{\textstyle v^*} & {\Downarrow\!\alpha^*} &\f_x}$,
for each deformation $\xymatrix @C=0pc {x
\ar@/^0.5pc/[rr]^{\textstyle u} \ar@/_0.5pc/[rr]_{\textstyle v} &
{\Downarrow\!\alpha} &y }$ of $\C$,
\item a 2-natural transformation  $\xymatrix @C=0.1pc {\f_z  \ar@/^0.7pc/[rr]^{\textstyle v^*u^*}
\ar@/_0.7pc/[rr]_{\textstyle (u\circ v)^*} &
{\Downarrow\!\zeta_{u,v}} &\f_x}$, for each pair of composable
morphisms $x\overset{\textstyle v}\to y\overset{\textstyle u}\to z$
in $\C$,
\end{itemize}
and these data must satisfy the following conditions:
\begin{itemize}
\item for any object $x$ and any morphism $u:x\to y$ in $\C$,
$$1^*_x=1_{\!\f_x} ,\   1_u^*=1_{u^*} \text{ and } \zeta_{u,1_{\!x}}=1_{u^*}=\zeta_{1_{\!y},u},$$
\item for any two vertically composable deformations in $\C$,
$\xymatrix @C=0.7pc {x \ar[rr] \ar@/^1.2pc/[rr]_-{\textstyle
\Downarrow\!\alpha} \ar@/_1.2pc/[rr]^-{\textstyle
\Downarrow\!\alpha'} & &y} $,
$$(\alpha'\alpha)^*=\alpha'^*\circ \alpha^*,$$
\item  for any two horizontally composable deformations in $\C$, $\xymatrix @C=0pc {x  \ar@/^0.7pc/[rr]^{\textstyle
u} \ar@/_0.7pc/[rr]_{\textstyle u'} & {\Downarrow\!\alpha} &y
\ar@/^0.7pc/[rr]^{\textstyle v} \ar@/_0.7pc/[rr]_{\textstyle v'} &
{\Downarrow\!\beta}& z}$, the following diagram of $2$-natural
transformations commutes:
$$
\xymatrix{u^*v^*\ar@{=>}[r]^{\textstyle \zeta_{v,u}}
\ar@{=>}[d]_{\textstyle \alpha^*\beta^*}&(v\circ u)^*
\ar@{=>}[d]^{\textstyle (\beta\circ\alpha)^*}\\
u'^*v'^*\ar@{=>}[r]^{\textstyle \zeta_{v',u'}}&(v'\circ u')^*,}
$$
\item for any three composable morphisms in $\C$,  $x\overset{\textstyle u}\to y\overset{\textstyle v}\to
z\overset{\textstyle w}\to t$, the following diagram of $2$-natural
transformations commutes:
$$
\xymatrix{u^*v^*w^*\ar@{=>}[r]^{\textstyle \zeta_{v,u} w^*}
\ar@{=>}[d]_{\textstyle u^*\zeta_{w,v}}&(v\circ u)^*w^*
\ar@{=>}[d]^{\textstyle \zeta_{w,v\circ u}}\\
u^*(w\circ v)^*\ar@{=>}[r]^{\textstyle \zeta_{w\circ v,u}}&(w\circ
v\circ u)^*.}
$$
\end{itemize}

The so-called Grothendieck construction on a lax functor
${\!\f:\C^{o}\rightsquigarrow \cat}$, for $\C$ a category,
underlies  the following 2-categorical construction, which is
actually a special case of the one considered by Bakovi\'{c} in
\cite[\S 4]{bak} and, for the case where $\C$ is a category, it is
an special case of the ones given by Tamaki in \cite[\S 3]{tama} and
by Carrasco, Cegarra and Garz\'on in \cite[\S 3]{c-c-g}:

\begin{definition} \label{gro}Let $\!\f:\C^{o}\rightsquigarrow \dcat$ be a normal lax functor, where   $\C$ is
a $2$-category.  The Grothendieck construction on $\!\f$ is  the $2$-category, denoted by
$$\xymatrix{\int_\C\!\f,}$$ whose objects are pairs $(a,x)$ where $x$ is an object of $\C$ and $a$
an object of the $2$-category $\f_x$. A morphism $(f,u):(b,y)\to
(a,x)$, is a pair of morphisms where $u:y\to x$ is in $\C$ and
$f:b\to u^*a$ in $\f_y$; and a deformation
$$\xymatrix@C=2pt{(b,y)  \ar@/^1pc/[rr]^{\textstyle (f,u)} \ar@/_1pc/[rr]_{(\textstyle f',u')} &
{\Downarrow\!(\phi,\alpha)} &(a,x) }  $$ consists of a deformation
$\xymatrix @C=3pt{y \ar@/^0.7pc/[rr]^{\textstyle u}
\ar@/_0.7pc/[rr]_{\textstyle u'}
 & {\Downarrow\!\alpha} &x }$ in $\C$,  together with a deformation
$$
\xymatrix@R=15pt@C=15pt{\ar@{}[drr]|(.55)*++{\textstyle \Downarrow\!
\phi}
                & u^*a \ar[dl]_{\textstyle \alpha^*_a}             \\
 u'^*a  & &     b \,,  \ar[ul]_{\textstyle f} \ar[ll]^{\textstyle f'}    }
$$
that is,  $\phi:\alpha^*_a\circ f\Rightarrow f'$, in $\!\f_y$.

The vertical composition in $\int_\C\!\f$ of deformations
$$\xymatrix@C=16pt{(b,y) \ar[rr]_{\textstyle
\Downarrow\!\!(\phi',\alpha')}  \ar@/_1.7pc/[rr]_{\textstyle
(f'',u'')} \ar@/^1.7pc/[rr]^{\textstyle (f,u)}\ar@{}[rr]^{\textstyle
\Downarrow\!(\phi,\alpha)} & &(a,x) }$$ is given by the formula
$(\phi',\alpha')(\phi,\alpha)=(\phi' (1\circ \phi),\alpha'\alpha)$,
where $\alpha'\alpha$ is the vertical composition in $\C$, and $\phi'
(1\circ \phi):(\alpha'\alpha)^*_a\circ f\Rightarrow f''$ is the
deformation in $\!\f_y$ obtained by pasting the diagram
$$
\xymatrix{u''^*a&u'^*a\ar[l]_{\textstyle
\alpha'^*_a}&u^*a\ar[l]_{\textstyle \alpha^*_a}&b.\ar[l]_{\textstyle
f}\ar@/^1.5pc/[ll]_(0.4){\textstyle
\Downarrow\!\phi}|(0.6){\textstyle f'}
\ar@/^2.3pc/[lll]_(0.7){\textstyle
\Downarrow\!\phi'}|(0.6){\textstyle f''}}
$$

The horizontal composition in $\int_\C\!\f$ of two arrows
$$\xymatrix{(c,z)\ar[r]^{\textstyle (g,v)}&
(b,y)\ar[r]^-{\textstyle (f,u)}&(a,x)}$$ is given by $$(f,u)\circ
(g,v) = (\zeta_a\circ v^*\!f\circ g,u\circ v):(c,z)\to (a,x),$$
where $u\circ v$ is the horizontal composition of $u$ and $v$ in
$\C$ and $\zeta_a\circ v^*\!f\circ g$ is the horizontal composite
$$\xymatrix{c\ar[r]^{\textstyle g}&v^*b\ar[r]^{\textstyle v^*\!f}&v^*u^*a\ar[r]^-{\textstyle \zeta_a}&(u\circ v)^*a} $$
in the $2$-category $\!\f_z$; and the horizontal composition of two
deformations
$$\xymatrix@C=3pt{(c,z)  \ar@/^1pc/[rr]^{\textstyle (g,v)} \ar@/_1pc/[rr]_{\textstyle (g',v')} &
{\Downarrow\!(\psi,\beta)} & (b,y)  \ar@/^1pc/[rr]^{\textstyle
(f,u)} \ar@/_1pc/[rr]_{\textstyle (f',u')} &
{\Downarrow\!(\phi,\alpha)} &(a,x) }  $$ is given by the formula
$$
(\phi,\alpha)\circ (\psi,\beta) =\left((1\circ \psi)(1\circ
v^*\!\phi\circ 1), \alpha\circ \beta\right),
$$
where $\alpha\circ \beta$ is the horizontal composition in $\C$ and $(1\circ \psi)(1\circ v^*\phi\circ 1)$ is the deformation in $\!\f_z$ obtained by pasting the diagram
$$
\xymatrix{(u\circ v)^*a \ar[dd]_-{\textstyle
(\alpha\circ\beta)^*_a}\ar@{}[rdd]|{\textstyle =}&
v^*\!u^*\!a\ar@{}[rdd]_(0.5){\textstyle =}\ar[dd]|(0.4){\textstyle
(\beta^*\!\alpha^*\!)_a}\ar[l]_{\textstyle \zeta}
\ar[rd]|{\textstyle v^*\!\alpha^*_a} & &v^*b\ar[ll]_{\textstyle
v^*\!f}\ar[dd]|{\textstyle \beta^*_b}
\ar[ld]|{\textstyle v^*f'}\\
&&v^*u'^*a\ar@{}[rd]|<<<<<{\textstyle =}\ar[ld]|{\textstyle
\beta_{u'^*\!a}}& &c\ar[lu]_{\textstyle g}\ar[ld]^{\textstyle
g'}\ar@{}[lllu]^<<<<<<<{\textstyle \Downarrow\!\psi}
\ar@{}[lllu]|>>>>>>>>>>>>>>>{\textstyle {\Downarrow\! v^*\!\phi}}
\\
(u'\circ v')^*a&v'^*u'^*a\ar[l]^{\textstyle
\zeta}&&v'^*b.\ar[ll]^{\textstyle v'^*\!f'} }
$$
\end{definition}

\begin{remark} {\em We should note that, with the necessary natural changes, the above Grothendieck construction makes sense on lax morphisms of tricategories  $\f:\C^{o}\rightsquigarrow {\mathbf{2\text{-}Cat}}$ with all of its coherence 3-cells invertible, from any $2$-category $\C$ (regarded as a strict tricategory in which the 3-cells are all identities) to the larger tricategory of small 2-categories ${\mathbf{2\text{-}Cat}}$, that is, the full subtricategory given by the 2-categories of the tricategory {\bf Bicat} \cite[\S 5]{g-p-s} of bicategories, pseudo-functors, pseudo-natural transformations, and modifications. However,  the resulting $\int_\C\!\f$ is not a 2-category, but rather a bicategory (see \cite[\S 3]{bak} or \cite[Definition 3.1]{c-c-g} for details.)}\qed
\end{remark}

For any given normal lax functor $\!\f:\C^{o}\rightsquigarrow \dcat
$, the 2-category $\int_\C\!\f$, whose construction is natural both
in $\C$ and $\!\f$, assembles all 2-categories $\!\f_x$ in the
following precise sense: There is a projection 2-functor
$$
\xymatrix{\pi :\int_\C\!\f \to \C,}
$$
given by
$$
\xymatrix@C=0pt{(b,y)  \ar@/^1pc/[rr]^{\textstyle (f,u)}
\ar@/_1pc/[rr]_{\textstyle (f',u')} & {\Downarrow\!(\phi,\alpha)}
&(a,x)}\ \overset{\textstyle \pi}\mapsto \ \xymatrix@C=0pt{y
\ar@/^0.6pc/[rr]^{\textstyle u} \ar@/_0.6pc/[rr]_{\textstyle u'} &
{\Downarrow\!\alpha} &x}
$$
and, for each object $z$ of $\C$, there is a pullback square of 2-categories
\begin{equation}\label{square}
\xymatrix{\f_z\ar[r]^{\textstyle j}\ar[d]&\int_\C\!\f\ar[d]^{\textstyle \pi}\\
[0] \ar[r]^{\textstyle z}&\C}
\end{equation}
where  $j:\!\f_z\to \int_\C\!\f$ is the embedding 2-functor defined by
$$\xymatrix@C=1pt{a \ar@/^0.7pc/[rr]^{\textstyle f} \ar@/_0.7pc/[rr]_{\textstyle g} &
{\Downarrow\!\!\phi} &b}\ \overset{\textstyle j}\mapsto  \
\xymatrix@C=1pt{(a,z)  \ar@/^1pc/[rr]^{\textstyle (f,1_z)}
\ar@/_1pc/[rr]_{\textstyle (g,1_z)} & {\Downarrow\!(\phi,1_{1_z})}
&(b,z) } .$$ Thus, $\!\f_z\cong \pi^{-1}(z)$, which is the fibre
2-category of $\pi$ at $z$.

The following main result in this section is consequence of Theorem \ref{B}:

\begin{theorem}\label{hc2} Suppose that $\f:\C^o\rightsquigarrow \dcat$ is a normal lax functor, where $\C$ is a $2$-category, such that the induced map
$\class w^*:\class \!\f_{z_0}\to \class \!\f_{z_1}$, for each
morphism $w:z_1\to z_0$ in  $\C$,  is a homotopy  equivalence. Then,
for every object $z$ of $\C$, the square induced by $(\ref{square})$
on classifying spaces
\begin{equation}\label{square2}
\xymatrix{\class\!\f_z\ar[r]\ar[d]&\class \!\int_\C\!\f\ar[d]\\ pt
\ar[r]^{\textstyle z}&\class \C}
\end{equation}
is homotopy cartesian.
\end{theorem}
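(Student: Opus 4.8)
The plan is to deduce the statement from Theorem~\ref{B} applied to the projection $2$-functor $\pi:\int_\C\!\f\to\C$, whose strict fibre over an object $z$ of $\C$ is $\f_z$ by the pullback square $(\ref{square})$. The essential point is to identify the homotopy fibre $2$-category $\comma{z}\pi$, up to homotopy equivalence of classifying spaces, with $\f_z$, compatibly with the base-change $2$-functors $w^*$ for the morphisms $w$ of $\C$. An object of $\comma{z}\pi$ is a triple $((a,x),v)$ with $x$ an object of $\C$, $a$ an object of $\f_x$ and $v:z\to x$ a morphism of $\C$; a morphism $((f,u),\beta):((b,y),v)\to((a,x),v')$ consists of a morphism $(f,u):(b,y)\to(a,x)$ of $\int_\C\!\f$ together with a deformation $\beta:u\circ v\Rightarrow v'$ in $\C$, and deformations are defined analogously. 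I would introduce the $2$-functor $\Gamma_z:\f_z\to\comma{z}\pi$, $a\mapsto((a,z),1_z)$, acting in the obvious way on the higher cells (it is strictly functorial since $\zeta_{1_z,1_z}=1$), together with the assignment $\Theta_z:\comma{z}\pi\to\f_z$ sending $((a,x),v)$ to $v^*a$, sending a morphism $((f,u),\beta):((b,y),v)\to((a,x),v')$ to the composite
$$\xymatrix@C=34pt{v^*b\ar[r]^-{\textstyle v^*\!f}&v^*u^*a\ar[r]^-{\textstyle (\zeta_{u,v})_a}&(u\circ v)^*a\ar[r]^-{\textstyle \beta^*_a}&v'^*a}$$
in $\f_z$, and sending a deformation to the evident whiskering of its $\f$-component. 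A routine, if somewhat lengthy, check --- using the naturality of the constraints $\zeta_{u,v}$ and the coherence axioms of the lax $2$-diagram $\f$ --- shows that $\Theta_z$ is a normal lax functor (in fact a $2$-functor), that $\Theta_z\Gamma_z=1_{\f_z}$, and that $\Gamma_z$ embeds $\f_z$ as a deformation retract of $\comma{z}\pi$: there is an oplax transformation $\Gamma_z\Theta_z\Rightarrow1_{\comma{z}\pi}$ whose component at $((a,x),v)$ is the morphism $((1_{v^*a},v),1_v):((v^*a,z),1_z)\to((a,x),v)$. By Lemma~\ref{trans}, $\class\Gamma_z:\class\f_z\to\class(\comma{z}\pi)$ is therefore a homotopy equivalence, with homotopy inverse $\class\Theta_z$.

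Next I would check the compatibility with base change and thereby verify the hypothesis of Theorem~\ref{B} for $\pi$. For a morphism $w:z_1\to z_0$ of $\C$, applying $w^*:\comma{z_0}\pi\to\comma{z_1}\pi$ after $\Gamma_{z_0}$ sends an object $a$ to $((a,z_0),w)$, whereas applying $\Gamma_{z_1}$ after $w^*:\f_{z_0}\to\f_{z_1}$ sends it to $((w^*a,z_1),1_{z_1})$; these are joined by the morphism $((1_{w^*a},w),1_w):((w^*a,z_1),1_{z_1})\to((a,z_0),w)$ of $\comma{z_1}\pi$, which is oplax natural in $a$. Hence, by Lemma~\ref{trans}, the square
$$\xymatrix@C=38pt{\class\f_{z_0}\ar[r]^-{\textstyle \class\Gamma_{z_0}}\ar[d]_{\textstyle \class w^*}&\class(\comma{z_0}\pi)\ar[d]^{\textstyle \class w^*}\\ \class\f_{z_1}\ar[r]^-{\textstyle \class\Gamma_{z_1}}&\class(\comma{z_1}\pi)}$$
commutes up to homotopy. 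Its horizontal arrows are homotopy equivalences by the previous paragraph and, by hypothesis, its left vertical arrow is a homotopy equivalence, so $\class w^*:\class(\comma{z_0}\pi)\to\class(\comma{z_1}\pi)$ is a homotopy equivalence for every morphism $w$ of $\C$; that is, $\pi$ satisfies the hypothesis of Theorem~\ref{B}.

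Applying Theorem~\ref{B} to $\pi$ then yields, for each object $z$ of $\C$, a homotopy cartesian square
$$\xymatrix@C=38pt{\class(\comma{z}\pi)\ar[r]^-{\textstyle \class\Phi}\ar[d]&\class\!\int_\C\!\f\ar[d]^{\textstyle \class\pi}\\ \class(\comma{z}\C)\ar[r]&\class\C,}$$
where $\Phi:\comma{z}\pi\to\int_\C\!\f$ is the forgetful $2$-functor. To finish, note that $\class(\comma{z}\C)$ is contractible by Lemma~\ref{cont}, and that the composite of its bottom map with the essentially unique homotopy equivalence $pt\simeq\class(\comma{z}\C)$ singled out by the object $(z,1_z)$ is the point $z:pt\to\class\C$; moreover $\Phi\,\Gamma_z=j$ on the nose, so under the homotopy equivalence $\class\Gamma_z$ the top map $\class\Phi$ becomes $\class j$. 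Replacing the top-left and bottom-left corners of the square above by $\class\f_z$ and $pt$ accordingly, we obtain exactly the square $(\ref{square2})$, which is therefore homotopy cartesian, as required.

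The step I expect to be the main obstacle is the computational one concealed in the first paragraph: proving that $\Theta_z$ is a well-defined $2$-functor (or at least a normal lax functor) and constructing the oplax deformation retraction $\Gamma_z\Theta_z\Rightarrow1_{\comma{z}\pi}$. This is precisely where the whole coherence apparatus of the lax $2$-diagram $\f$ --- the structure $2$-natural transformations $\zeta_{u,v}$, their naturality, and the associativity and interchange axioms --- must be brought to bear; everything else is formal once Theorem~\ref{B} and Lemmas~\ref{trans} and~\ref{cont} are in hand.
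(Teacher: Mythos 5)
Your proposal is correct and follows essentially the same route as the paper: embed $\f_z$ into $\comma{z}\pi$ as a deformation retract (your $\Gamma_z,\Theta_z$ are the paper's $\mathbf{i}_z,\mathbf{p}_z$, with the same oplax retraction and the same appeal to Lemma \ref{trans}), verify the hypothesis of Theorem \ref{B} for the projection $\pi$, and conclude using Lemma \ref{cont} and the factorization of the square $(\ref{square})$ through $\comma{z}\pi$ and $\comma{z}\C$. The only (harmless) difference is that you check base-change compatibility by comparing $w^*$ with the embeddings $\Gamma_{z}$, whereas the paper compares $w^*$ with the retractions $\mathbf{p}_z$ via the $2$-natural transformation with components $\zeta_a:w^*v^*a\to(v\circ w)^*a$.
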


\begin{proof}  Observe that, for each object $z\in \C$, the homotopy fibre 2-category $\comma{z}\pi$ has objects the
triples $(a,x,v)$ with $v:z\to x$ a morphism in $\C$ and $a$ an
object of $\!\f_x$. A morphism $(f,u,\beta):(a,x,v)\to (a',x',v')$
consists of a morphism $u:x\to x'$ together with a deformation
$\beta:u\circ v\Rightarrow v'$ in $\C$ and a morphism $f:a\to u^*a'$
in $\!\f_x$; and a deformation
\begin{equation}\label{defor}
\xymatrix@C=1pt{(a,x,v)  \ar@/^1pc/[rr]^{\textstyle (f,u,\beta)}
\ar@/_1pc/[rr]_{\textstyle (f',u',\beta')} &
{\Downarrow\!(\phi,\alpha)} &(a',x',v') }
\end{equation}
 is a pair with $\alpha:u\Rightarrow u'$ a deformation in $\C$ such that $\beta'
(\alpha\circ 1_v)=\beta$ and
$$
\xymatrix@R=15pt@C=15pt{\ar@{}[drr]|(.55)*+{\textstyle \Downarrow\!
\phi}
                & u^*a' \ar[dl]_{\textstyle \alpha^*_{a'}}             \\
 u'^*a'  & &     a \,,  \ar[ul]_{\textstyle f} \ar[ll]^{\textstyle f'}     }
$$
a deformation in $\f_x$.

We have an embedding 2-functor $\mathbf{i}=\mathbf{i}_z: \!\f_z\to \comma{z}\pi$  given by
$$
\xymatrix@C=1pt{a \ar@/^0.7pc/[rr]^{\textstyle f}
\ar@/_0.7pc/[rr]_{\textstyle f'} & {\Downarrow\!\phi} &a'}\
\overset{\textstyle \mathbf{i}}\mapsto \xymatrix@C=1pt{(a,z,1_z)
\ar@/^1pc/[rr]^{\textstyle (f,1_z,1_{1_z})}
\ar@/_1pc/[rr]_{\textstyle (f',1_z,1_{1_z})} &
{\Downarrow\!(\phi,1_{1_z})} &(a',z,1_z) } ,
$$
and there is also a $2$-functor  $\mathbf{p}=\mathbf{p}_z: \comma{z}\pi\to \!\f_z$ that is defined as follows:
it carries an object $(a,x,v)$ of $\comma{z}\pi$ to the object $v^*a$ of $\!\f_z$,
a morphism $(f,u,\beta):(a,x,v)\to(a',x',v')$ is mapped by $\mathbf{p}$ to the composite morphism
$$v^*a\overset{\textstyle v^*\!f}\longrightarrow v^*u^*a'\overset{\textstyle \zeta_{a'}}\longrightarrow
(u\circ v)^*a' \overset{\textstyle \beta^*_{a'}}\longrightarrow
v'^*a',$$ and a deformation $(\phi,\alpha):(f,u,\beta)\Rightarrow
(f',u',\beta')$, as in (\ref{defor}), to the deformation $1\circ
v^*\phi:\mathbf{p}(f,u,\beta)\Rightarrow \mathbf{p}(f',u',\beta')$
obtained by pasting the diagram
$$\xymatrix@R=17pt@C=25pt{
&v^*u^*a'\ar[r]^{\textstyle \zeta_{a'}}\ar[dd]|(0.3){\textstyle
v^*\alpha^*_{a'}}&
(u\circ v)^*a'\ar[dd]|{\textstyle (\alpha\!\circ \!1_v)^*_{a'}}\ar[rd]^{\textstyle \beta^*_{a'}}&\\
v^*a\ar[rd]_{\textstyle v^*\!f'} \ar[ru]^{\textstyle v^*\!f}
\ar@{}[rrr]|<<<<<<<<<{\textstyle \Downarrow\!
v^*\phi}&\ar@{}[r]|(0.4){\textstyle =}&
\ar@{}[r]|(0.51){\textstyle =}&v'^*a'.\\
&v^*u'^*a'\ar[r]^{\textstyle \zeta_{a'}}&(u'\circ
v)^*a'\ar[ru]_{\textstyle \beta'^*_{a'}}&}
$$

By Lemma \ref{trans}, both 2-functors $\mathbf{i}$ and $\mathbf{p}$ induce homotopy equivalences
on classifying spaces
$$
\class \mathbf{i}:\class\f_z\simeq \class(\comma{z}\pi),
\hspace{0.4cm} \class\mathbf{
p}:\class(\comma{z}\pi)\simeq\class\f_z,$$ since $\mathbf{
p}\,\mathbf{ i} = 1$ and there is a oplax  transformation
$\theta:\mathbf{ i}\,\mathbf{ p}\Rightarrow 1$, whose component at
an object $(a,x,v)$ of $\comma{z}\pi$ is the morphism
$$
\theta_{(a,x,v)}=(1_{v^*a},v,1_v):(v^*a,z,1_z)\to (a,x,v),
$$
and whose component at a morphism $(f,u,\beta):(a,x,v)\to
(a',x',v')$, $$\theta_{(f,u,\beta)}:(f,u,\beta)\circ
\theta_{(a,x,v)}\Rightarrow \theta_{(a',x',v')}\circ
\mathbf{i\,p}(f,u,\beta),$$ is the deformation $(1,\beta)$,
$$
\xymatrix@C=10pt{(v^*a,z,1_z)  \ar@/^1pc/[rr]^{\textstyle
(\zeta_{a'}\circ v^*\!f,u\circ v,\beta)} \ar@/_1pc/[rr]_{\textstyle
(\beta^*_{a'}\circ \zeta_{a'}\circ v^*\!f,v',1_{v'})} &
{\Downarrow\!(1,\beta)} &(a',x',v'). }
$$

The square $(\ref{square})$ is the composite of the squares
$$
\xymatrix{\f_z\ar[r]^{\textstyle
\mathbf{i}_z}\ar[d]^(0.4){\textstyle
\hspace{0.5cm}(a)}&\comma{z}\pi\ar[r]^{\textstyle
\Phi}\ar[d]^(0.4){\textstyle \pi'\hspace{0.3cm}(b)}
&\int_\C\!\f\ar[d]^{\textstyle \pi}\\
[0]\ar[r]^{\textstyle (z,1_z)}&\comma{z}\C\ar[r]^{\textstyle
\Phi}&\C\,,}
$$
where, in $(a)$ both horizontal 2-functors induce homotopy equivalences on classifying spaces (recall Lemma
\ref{cont}). Therefore,  the induced square (\ref{square2}) is homotopy cartesian if and only if the one induced by $(b)$
is as well. But Theorem \ref{B} actually implies that the square induced by $(b)$
$$
\xymatrix{\class(\comma{z}\pi)\ar[r]\ar[d]&\class\! \int_\C\!\f\ar[d]\\ \class(\comma{z}\C)\ar[r]&\class\C}
$$
is homotopy cartesian: to verify the hypothesis of Theorem \ref{B}, let $w:z_1\to z_0$ any given morphism  in
$\C$. We have the square of 2-functors
$$
\xymatrix{\comma{z_0}\pi\ar[r]^{\textstyle \mathbf{p}}\ar[d]_{\textstyle w^*}&\f_{z_0}\ar[d]^{\textstyle w^*}\\ \comma{z_1}\pi\ar[r]^{\textstyle \mathbf{p}}&\f_{z_1}.}
$$
The two composite 2-functors in the square are related by a 2-natural transformation
$w^*\,\mathbf{p}\Rightarrow \mathbf{p}\,w^*$,
whose component at an object $(a,x,v)$ of $\comma{z_0}\pi$ is the
morphism $\zeta_a:w^*v^*a\to(v\circ w)^*a$ in $\comma{\z\xi}F$.
Hence, by Lemma \ref{trans}, the induced square
on classifying spaces
$$
\xymatrix{\class(\comma{z_0}\pi)\ar[r]^{\textstyle \class \mathbf{p}}\ar[d]_{\textstyle \class w^*}&\f_{z_0}\ar[d]^{\textstyle \class w^*}\\ \class(\comma{z_1}\pi)\ar[r]^{\textstyle \class\mathbf{p}}&\f_{z_1}.}
$$
is homotopy commutative; that is, there is a homotopy
$\class w^*\,\class \mathbf{p}\simeq
\class\mathbf{p}\,\class w^*$.  Since the induced maps $\class\mathbf { p}:\class\comma{z_i}\pi\to \class\!\f_{z_i}$, $i=0,1$, and
 $\class w^*:\class\!\f_{z_0}\to \class \!\f_{z_1}$ are all homotopy equivalences, the map $\class
w^*:\class(\comma{z_0}\pi)\to \class(\comma{z_1}\pi)$ is also. \qed \end{proof}

\begin{example}{\em  The $2$-category $\cat$, of small categories, functors and natural transformations, is a full
$2$-subcategory of $\dcat$, regarding  any category as a $2$-category whose deformations are all identities. Hence, the
Grothendieck construction in Definition \ref{gro} works on normal lax functors $\C^{o}\rightsquigarrow\cat$, with $\C$ any $2$-category.
 For any object  $x$ in a $2$-category $\C$, we have the $2$-functor ${\C(-,x):\C^{o}\to \cat}$ on which the
 Grothendieck construction  gives $$\xymatrix{\int_\C\C(-,x)=\comma{\C}x,}$$
 the comma $2$-category of objects over $x$, which, by Lemma \ref{cont}, has a contractible classifying space. Therefore,
  if the object $x$ is such that the induced maps
 $\class u^*: \class\C(z,x)\to\class\C(y,x)$ are homotopy equivalences for the different morphisms $u:y\to z$ in $\C$,
 then Theorem \ref{hc2} implies the existence of a homotopy equivalence $$\Omega(\class\C,x)\simeq \class(\C(x,x)),$$
between the loop space of the classifying space of the $2$-category $\C$ with base point $x$ and the classifying space of the category of
endomorphisms of $x$ in $\C$. }\qed
\end{example}

The well-known Homotopy Colimit Theorem by Thomason \cite{thomason} establishes that the Grothendieck construction
on a diagram of categories is actually a categorical model for the homotopy type of the homotopy colimit of the diagram
of categories. The notion of homotopy colimit has been well generalized in the literature to 2-functors $\!\f:\C^{o}\to
\cat$ (see \cite[2.2]{hinich}, for example), where $\C$ is any 2-category and $\cat\subseteq \dcat$, the 2-category of
small categories, functors and natural transformations. Next, Theorem \ref{hct} generalizes Thomason's theorem both to
2-diagrams of categories $\!\f:\C^{o}\to \cat$, with $\C$ a 2-category,  and to diagrams of 2-categories $\!\f:\C^{o}\to
\dcat$, with $\C$ a category.

Recall that the {\em homotopy colimit} of a 2-functor $\!\f:\C^{o}\to \cat$, where $\C$ is a 2-category,  is defined
\cite[Definition
(2.2.2)]{hinich} to be the simplicial category
\begin{equation}\label{hcc} \mbox{hocolim}_\C\f:\gner^{\!o}\to \cat,
\end{equation}
whose category of $n$-simplices is
$$
 \bigsqcup_{(x_0,\ldots,x_n)\in \mbox{\scriptsize Ob}\C^{n+1}}\hspace{-0.4cm} \!\f_{x_0}\times
\C(x_1,x_0)\times\C(x_2,x_1)\times\cdots\times\C(x_n,x_{n-1});
$$
faces and degeneracies are defined as follows: the face functor $d_0$ maps the component category $\!\f_{x_0}\times \C(x_1,x_0)\times\C(x_2,x_1)\times\cdots\times\C(x_n,x_{n-1})$ into
$\!\f_{x_1}\times \C(x_2,x_1)\times\cdots\times\C(x_n,x_{n-1})$, and it is induced by
$$\xymatrix{
\!\f_{x_0}\times \C(x_1,x_o)\ar[r]^-{\textstyle d_0} &\!\f_{x_1},}
$$
$$
\big(a\overset{\textstyle f}\to b,\xymatrix@C=0pt{x_1 \ar@/^0.7pc/[rr]^{\textstyle u} \ar@/_0.7pc/[rr]_{\textstyle v} & {\Downarrow\!\alpha} &x_0}\big)\ \mapsto
\ \xymatrix@C=35pt{\textstyle u^*a\ar[r]^{\textstyle \alpha^*_b\circ u^*\! f}& v^*b.}
$$
The other face and degeneracy functors  are induced by
the operators $d_i$ and $s_i$ in $\ner\C$ as $1_{\!\f_{x_0}}\times d_i$ and $1_{\!\f_{x_0}}\times s_i$,
respectively. Note that by composing $\mbox{hocolim}_\C\!\f$ with the nerve of categories functor one obtains the {\em bisimplicial Borel construction} $E_\C\f$ in the sense of Tillmann \cite{til}.

\begin{theorem}\label{hct} (i) For any $2$-functor
${\f:\C^{o}\to \cat}$, where $\C$ is a $2$-category, there exists a natural homotopy equivalence $$\xymatrix{\class \,\mbox{\em hocolim}_\C\f\simeq
\class\!\int_\C\!\f,}$$ where $\mbox{\em hocolim}_\C\f$ is given by $(\ref{hcc})$.

(ii) For any  functor $\!\f:\C^{o}\to\dcat$, where $\C$ is a category, there exists a natural homotopy equivalence
$$\xymatrix{\class \,\mbox{\em hocolim}_\C\f\simeq
\class\!\int_\C\!\f,}$$ where $\mbox{\em hocolim}_\C\f\!:=\!\mbox{\em hocolim}_\C\gner\!\f:\Delta^{\!o}\to\set$ is the homotopy
colimit, {\em\cite[Ch. XII]{bousfield-kan}}, of the $\C^{o}$-diagram of simplicial sets obtained by composing $\f$ with the
geometric nerve functor $\gner:\dcat\to \sset$ given by $(\ref{gner})$.

\end{theorem}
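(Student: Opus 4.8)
The plan is to construct a natural comparison map over the nerve of the indexing $2$-category and to prove it a weak equivalence by descending to the case where the index is a single simplex $[n]$, where everything is pinned down by an extreme (initial/terminal) object; the homotopy-theoretic content will be exactly the ``contract onto a vertex'' argument already used for Lemmas \ref{cont} and \ref{lemma2}.

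I would treat part (ii) first, since there $\mathrm{hocolim}_\C\!\f=\mathrm{hocolim}_\C\gner\f$ is already a simplicial set, namely the diagonal of $([p],[q])\mapsto\bigsqcup_{x_p\to\cdots\to x_0}(\gner\f_{x_0})_q$. By Theorem \ref{teo1} it then suffices to compare it with $\gner(\int_\C\!\f)$. The key is an explicit natural simplicial map $\Theta\colon\mathrm{hocolim}_\C\gner\f\to\gner(\int_\C\!\f)$: it sends a pair consisting of a chain $x_n\to\cdots\to x_0$ in $\C$ and a normal lax functor $\sigma\colon[n]\rightsquigarrow\f_{x_0}$ to the normal lax functor $[n]\rightsquigarrow\int_\C\!\f$ whose $i$-th vertex is $(u^{*}\sigma_i,x_i)$ for $u\colon x_i\to x_0$ the composite along the chain, its $1$- and $2$-cell data being obtained from those of $\sigma$ by applying the $2$-functors $u^{*}$ (functoriality of $\f$ and the fact that each $u^{*}$ preserves all compositions make this well defined and simplicial). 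This $\Theta$ is a map over $\ner\C$, the two projections being the canonical one $\mathrm{hocolim}_\C\gner\f\to\ner\C$ and $\gner(\pi)$ for the projection $2$-functor $\pi\colon\int_\C\!\f\to\C$.

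Since $\ner\C$ is the colimit of its simplices and base change along a map of simplicial sets preserves colimits, a standard skeletal induction (in the spirit of Fact \ref{fact}) reduces the assertion that $\Theta$ is a weak equivalence to the assertion that it becomes one after pullback along every simplex $\z\colon\gner[n]\to\ner\C$; one checks that such a pullback is $\Theta$ for the $[n]^{o}$-diagram $\z^{*}\!\f$ obtained from $\f$ by restriction, so we may assume $\C=[n]$. Now $[n]$ has initial object $n$ and terminal object $0$; running the ``transport everything to the vertex $n$'' construction exactly as in the proof of Lemma \ref{cont} (the retraction being given by a transformation assembled from the canonical morphisms $n\to x$ of $[n]$) shows that the inclusion of the fibre $2$-category $\f_n\hookrightarrow\int_{[n]}\!\f$ is a deformation retract, hence, by Lemma \ref{trans}, induces a homotopy equivalence on classifying spaces; dually, $[n]^{o}$ has a terminal object $n$, so the homotopy colimit $\mathrm{hocolim}_{[n]}\gner\f$ is computed by the value $\gner\f_n$ there and the inclusion $\gner\f_n\hookrightarrow\mathrm{hocolim}_{[n]}\gner\f$ is a homotopy equivalence. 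These two equivalences are compatible with $\Theta$, which is therefore a weak equivalence when $\C=[n]$; re-assembling over the simplices of $\ner\C$ yields $\class(\int_\C\!\f)\simeq|\gner(\int_\C\!\f)|\simeq\mathrm{hocolim}_\C\gner\f=\class\,\mathrm{hocolim}_\C\!\f$, naturally in $\f$. (When $\C$ is already a category and $\f$ takes values in $\cat$, this $\C=[n]$ step is the linearly ordered case of Thomason's Homotopy Colimit Theorem \cite{thomason}.)

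Part (i) follows the same pattern with one extra simplicial direction: $\mathrm{hocolim}_\C\!\f$ is now a simplicial category and $\C$ a genuine $2$-category, so one models $\class\C$ by $|\gner\C|$ (Theorem \ref{teo1}), stratifies $\gner(\int_\C\!\f)$ and a bisimplicial model of $\class\,\mathrm{hocolim}_\C\!\f$ over the simplices of $\gner\C$, reduces by Fact \ref{fact} to $\C=[n]$ (a discrete $2$-category), and applies the identical initial/terminal-object retraction, now with $\f_n$ an ordinary category. The main obstacle, as always with Thomason-type statements, is not the homotopy theory — the only input is the contractibility forced by a (co)final object, already isolated in Lemmas \ref{cont} and \ref{lemma2} — but the bookkeeping: one must verify that pulling $\pi$ (and its homotopy-colimit counterpart) back along a simplex of $\gner\C$ returns the corresponding constructions for the restricted diagram, and must arrange the resulting squares of (bi)simplicial sets so that Fact \ref{fact} applies; this is precisely the kind of delicate multisimplicial argument that already made the proof of Theorem \ref{B} non-trivial.
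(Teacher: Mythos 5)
Your proposal takes a genuinely different route from the paper's. The paper's proof is almost entirely combinatorial: it applies the Artin--Mazur codiagonal $\overline{W}$ to the relevant bisimplicial sets (in part (i), to $\ner\,\mathrm{hocolim}_\C\f$ and to the double nerve of $\int_\C\!\f$; in part (ii), to $S=\bigsqcup_{\x}\gner\f_{x_0}$) and exhibits an explicit simplicial \emph{isomorphism} $\overline{W}S_1\cong\overline{W}S_2$, resp. $\overline{W}S\cong\gner\!\int_\C\!\f$; the only homotopy-theoretic input is the Cegarra--Remedios theorem that the Zisman map $\eta:\diag S\to\overline{W}S$ is a weak equivalence. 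Your comparison map $\Theta$ is in fact exactly the composite of $\eta$ with that isomorphism (its $i$-th vertex $(x_{0,i}^*y_i,x_i)$ is the paper's $y'_i$), but you propose to prove it is a weak equivalence by a local-to-global argument over $\ner\C$, reducing to $\C=[n]$ where both sides contract onto the fibre $\f_n$. What the paper's route buys is that no fibrewise descent is needed at all; what yours would buy is independence from the $\diag\simeq\overline{W}$ comparison, at the price of redoing the hardest technical step of the paper.

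There are two genuine gaps. First, the reduction ``it suffices to check $\Theta$ after pullback along every simplex of $\ner\C$'' is not a formal consequence of Fact \ref{fact}: it is the statement that for any $X\to B$ the canonical map $\mathrm{hocolim}_{\Delta[n]\to B}\,X\times_B\Delta[n]\to X$ is a weak equivalence, or equivalently the horn-filling/small-object argument of Lemma \ref{car}, which itself needs the transition maps between the pulled-back pieces over the various simplices to be weak equivalences (here supplied by your contractions onto $\f_n$). This has to be proved, and it is exactly as delicate as Lemma \ref{car}. Granting it, your part (ii) is sound: the pullbacks of both sides over $\z:\Delta[n]\to\ner\C$ are indeed the corresponding constructions for $\z^*\f$, and the terminal-object contractions onto $\gner\f_n$ are compatible with $\Theta$ by inspection.

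Second, part (i) does not reduce to ``$\C=[n]$ a discrete $2$-category'' as stated. The bisimplicial set $\ner\,\mathrm{hocolim}_\C\f$ lives naturally over the double nerve $\ner\ner\C$, whereas $\gner\!\int_\C\!\f$ lives over the geometric nerve $\gner\C$; these are weakly equivalent but not isomorphic, so there is no common base over which to stratify both sides and compare fibres. Moreover, restricting the $2$-functor $\f:\C^{o}\to\cat$ along a geometric simplex $\z:[n]\rightsquigarrow\C$ yields only a normal lax diagram on the discrete category $[n]$, for which the homotopy colimit formula $(\ref{hcc})$ is not defined, and the $2$-cells of $\C$ --- which genuinely enter $(\ref{hcc})$ through the face map $d_0(f,\alpha)=\alpha^*_b\circ u^*\!f$ --- are invisible after such a reduction. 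This is precisely where the paper's codiagonal argument earns its keep: $\overline{W}$ absorbs the extra simplicial direction carrying the $2$-cells and reduces part (i) to the same explicit bijection as part (ii).
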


\begin{proof} We shall use the bar construction on a bisimplicial set $\overline{W}S$, also called its ``codiagonal" or
``total complex"  \cite{artin-mazur,cordier}. Let us recall that the functor
$$
\overline{W}: \bsset \to \sset
$$
is  the right Kan extension along the ordinal sum functor
$\gner\times \gner \to\gner$, $([p],[q])\mapsto [p+1+q]$. For an
explicit description of the $\overline{W}$ construction, it is often
convenient to view a bisimplicial set $S\!:\!\Delta^{\!o}\!\times\!
\Delta^{\!o}\to \set$ as a (horizontal) simplicial object in the
category of (vertical) simplicial sets. For this case, we write ${d_i^{ h
}\!=\!S(\delta_i,1)\!:\!S_{p,q}\to S_{p-1,q}}$ and $s_i^{ h
}\!=\!S(\sigma_i,1):S_{p,q}\to S_{p+1,q}$ for the horizontal face
and degeneracy maps and, similarly, $d_j^{ v }=S(1,\delta_j)$ and
$s_j^{ v }=S(1,\sigma_j)$ for the vertical ones. Then, for any given
bisimplicial set $S$, $\overline{W}S$ can be described as follows
(cf. \cite[\S III]{artin-mazur}): the set of $p$-simplices of
$\overline{W}S$ is
$$
\Big\{(t_{0}, \dots,t_{p})\in \prod_{m=0}^{p}S_{m,p-m}~|~d^v_0t_{m}=d^h_{m+1}t_{m+1},\, 0\leq m< p\Big\}$$
and, for $0\leq i\leq p$, the faces and degeneracies of a $p$-simplex are given by
$$
\begin{array}{lll}d_i(t_{0},
\dots,t_{p})&=&(d_i^vt_{0},\dots,d_i^vt_{i-1},d_i^ht_{i+1},\dots,d_i^ht_{p}),
\\[-4pt]~\\
s_i(t_{0}, \dots,t_{p})&=&(s_i^vt_{0},\dots,s_0^vt_{i},s_i^ht_{i},\dots,s_i^ht_{p}).
\end{array}
$$

For any bisimplicial set $S$, there is a natural Alexander-Whitney type diagonal approximation, the so-called Zisman comparison map (see \cite{cordier})
\begin{equation}\label{eta} \eta:\diag\,S\to \overline{W}S,
\end{equation}
which carries a $p$-simplex $t\in S_{p,p}$
to
$$
\eta\, t=\big((d_1^h)^pt,(d_2^h)^{p-1}d_0^vt,\dots, (d_{m+1}^h)^{p-m}(d_0^v)^mt,\dots,
(d_0^v)^pt\big).
$$
And the following is a useful result (see  \cite[Theorem 1.1, Theorem 9]{cegarra1,cegarra2}):
\begin{fact} For any bisimplicial set $S$, the simplicial map $\eta:\diag\,S\to \overline{W}S$ is a weak homotopy equivalence.
\end{fact}

\vspace{0.2cm}\noindent\underline{\em Proof of (i)}. The strategy of the proof is to apply the weak homotopy
equivalences (\ref{eta}) above on the following two bisimplicial sets $S_1$ and $S_2$. We let
$$ S_1=\ner\, \text{hocolim}_\C\f:\gner^{\!o}\times\gner^{\!o}\to \set,$$ the bisimplicial set obtained by composing the
simplicial category $\text{hocolim}_\C\f$ (\ref{hcc}) with the nerve functor of categories, and
$$\xymatrix{
S_2=\ner\ner \int_\C\!\f:\gner^{\!o}\times\gner^{\!o}\to \set,}
$$
the bisimplicial set double nerve of the 2-category $\int_\C\!\f$. Since $$\class\,\text{hocolim}_\C\f=|\diag
S_1|,\hspace{0.3cm} \class\! \xymatrix{\int_\C\!\f} =|\diag S_2|,$$  we have homotopy equivalences induced by the maps
(\ref{eta})
$$\class\,\text{hocolim}_\C\!\f\simeq|\overline{W}
S_1|,\hspace{0.3cm} \class\! \xymatrix{\int_\C\!\f} \simeq|\overline{W} S_2|.$$

We shall now observe the nature of the simplicial sets $\overline{W} S_1$ and $\overline{W} S_2$. A
$p$-simplex of $\overline{W} S_1$, say $\chi$, is a list of data
\begin{equation}\label{eqxi}
\chi=\Big(\xymatrix@C=18pt{u_{m}^{m}{}^*a_{m-1}&a_{m}\ar[l]_-{\textstyle f_{m}}},\ \xymatrix@C=2pt{x_{m-1}&  {\Downarrow\!\alpha_{m}^{j}} &x_{m} \ar@/^1pc/[ll]^{\textstyle u_{m}^{j-1}} \ar@/_1pc/[ll]_{\textstyle u_{m}^{j}}}\Big)_{\hspace{-5pt}\scriptsize
\begin{array}{l}0< m\leq p\\[0pt]1< j\leq m,\end{array}}
\end{equation}
in which $x_0,\dots,x_p$ are objects, the $u_{m}^{j}$ morphisms and the $\alpha_{m}^{j}$ are
deformations in $\C$, each $a_m$ is an object in the category $\!\f_{x_m}$ and each $f_{m}$ is a morphism of the
category $\f_{x_{m}}$; while a $p$-simplex $\chi'\in \overline{W} S_2$ is a list of objects, morphisms and
deformations in $\int_\C\!\f$ of the form
\begin{equation}\label{eqxi'}
\chi'=\Big(\xymatrix@C=6pt{(a_{m-1},u_{m-1})  & {\Downarrow\!\alpha_{m}^{j}} &\ar@/^1pc/[ll]^{\textstyle (f_{m}^{j-1}, u_{m}^{j-1})} \ar@/_1pc/[ll]_{\textstyle (f_{m}^{j},u_{m}^{j})} (a_{m},x_{m}) }\Big)_{\hspace{-5pt}\scriptsize
\begin{array}{l}0< m\leq p\\[0pt]1<j\leq m.\end{array}}
\end{equation}

Since in $\chi'$ all triangles
$$
\xymatrix@R=7pt{&u_{m}^{j}{\!}^*a_{m-1}\ar[dd]^{\textstyle (\alpha_{m}^{j})^*_{a_{m-1}}}\\
a_{m}\ar[ru]^(0.4){\textstyle f_{m}^{j}}
\ar[rd]_(0.4){\textstyle f_{m}^{j-1}}&\\&u_{m}^{j-1}{\!}^*a_{m-1}}
$$
must be commutative, we have equalities
$$
f_{m}^{j-1}=(\alpha_{m}^{j})^*_{a_{m-1}}\circ f_{m}^{j}= (\alpha_{m}^{j})^*_{a_{m-1}}\circ
(\alpha_{m}^{j+1})^*_{a_{m-1}}\circ f_{m}^{j+1}=\cdots
$$
$$
\cdots =(\alpha_{m}^{m}\cdots \alpha_{m}^{j})^*_{a_{m-1}}\circ f_{m}^{m}.
$$
Hence, it is straightforward to verify that there is a simplicial bijection $$\overline{W} S_1\cong \overline{W} S_2,$$
that carries a $p$-simplex $\chi$ of $\overline{W} S_1$, as in (\ref{eqxi}), to the $p$-simplex $\chi'$ of $\overline{W} S_2$, as in (\ref{eqxi'}), in which all
data $x's$, $u's$ $\alpha's$ and $a's$ are the same as those in $\chi$, and the morphisms $f_{m}^{j}:a_{m}\to
u_{m}^{j}{\hspace{-1pt}}^*a_{m-1}$ are respectively given by:
$$
\left\{\begin{array}{l}f_{m}^{m}=f_{m},\\[3pt]f_{m}^{j-1}=(\alpha_{m}^{m}\cdots \alpha_{m}^{j})^*_{a_{m-1}}\circ
f_{m} \ \text{ for } 1< j< m.\end{array}\right.
$$
This makes  the proof for $(i)$ complete.

\vspace{0.2cm}\noindent\underline{\em Proof of (ii)}. In this case, we show a weak homotopy equivalence of
simplicial sets $$\text{hocolim}_\C\gner \!\f\to \xymatrix{\gner\int_\C\!\f},$$ and, to do so, we first give a
description of both simplicial sets.

On the one hand, $\text{hocolim}_\C\gner \!\f$ is the simplicial set diagonal of the bisimplicial set
\begin{equation}\label{s}
S=\bigsqcup_{\x\in\ner\C}\gner\!\f_{x_0}=\bigsqcup_{\x:[q]\to \C}\Lfunc([p],\!\f_{x_0}),
\end{equation}
whose $(p,q)$-simplices are pairs
$$
(\y,\x)
$$
consisting of a functor $\x:[q]\to\C$ and a normal lax functor
$\y:[p]\rightsquigarrow \!\f_{x_0}$.  The horizontal
face  and degeneracy maps are given by $$d_i^h(\y,\x)=(\y\delta_i, \x), \hspace{0.2cm}
s_i^h(\y,\x)=(\y\sigma_i, \x),$$ for $0\leq i\leq p$, and the vertical
ones by
$$d_j^v(\y,\x)=(\y, \x\delta_j), \hspace{0.2cm} s_j^v(\y,\x)=(\y, \x\sigma_j),$$
for $0\leq j\leq q$,
except the vertical $0^{\text{th}}$ face which is defined by
$$d_0^v(\y,\x)=(x_{0,1}^*\y, \x\delta_0),$$
where $x_{0,1}^*\y:[p]\rightsquigarrow \!\f_{x_1}$ is the
lax functor obtained by the composition of $\y$ with the 2-functor $x_{0,1}^*:\!\f_{x_0}\to \!\f_{x_{1}}$ attached
in diagram $\f:\C\to \dcat$ at the morphism $x_{0,1}:x_{1}\to x_0$ of $\C$.

On the other hand, a $p$-simplex of $\gner\!\int_\C\!\f$ is a normal lax functor $[p]\rightsquigarrow
\int_\C\!\f$, which can be described as a pair
$$(\yp,\x),$$ where $\x:[p]\to \C$ is a functor,
that is, a $p$-simplex of $\ner\C$, and $\yp:[p]\rightsquigarrow \!\f$ is a normal {\em $\x$-crossed lax functor}
\cite[\S 4.1]{cegarra3}, that is, a family
\begin{equation}\label{xlf}
\yp=\{y'_i,y'_{i,j},y'_{i,j,k}\}_{0\leq i\leq j\leq k\leq p}
\end{equation}
in which each $y'_i$ is an object of the 2-category $\f_{x_i}$, each $y'_{i,j}:y'_j\rightarrow x_{i,j}^*y'_i$ is a
morphism in $\f_{x_j}$, and the $y_{i,j,k}':x_{j,k}^*y'_{i,j}\circ y'_{j,k}\Rightarrow y'_{i,k}$ are deformations in
$\f_{x_{\!k}}$
$$
\xymatrix{\ar@{}[drr]|(.6)*+{\Downarrow y'_{_{i,j,k}}}
                & x_{j,k}^*y'_j \ar[dl]_{\textstyle x_{j,k}^*y'_{i,j}}             \\
 x_{j,k}^*x_{i,j}^*y'_i=x_{i,k}^*y'_i  & &     y'_k\,,   \ar[ul]_{\textstyle y'_{j,k}} \ar[ll]^{\textstyle y'_{i,k}}}
$$
satisfying the condition that, for $0\leq i\leq j\leq k\leq l \leq p$, the following diagram of deformations
\begin{equation}\label{ud}
\xymatrix@C=0pt@R=30pt{x_{k,l}^*(x_{j,k}^*y'_{i,j}\circ y'_{j,k})\circ y'_{k,l}\ar@<-2pt>@{=>}[d]_(0.5){\textstyle
x_{k,l}^*y'_{i,j,k}\circ 1}  &=&
x_{j,l}^*y'_{i,j}\circ x_{k,l}^*y'_{j,k}\circ y'_{k,l}\ar@<2pt>@{=>}[d]^(0.5){\textstyle 1\circ y'_{j,k,l}}&\\  x_{k,l}^* y'_{i,k}\circ y'_{k,l} \ar@{=>}[r]^(0.6){\textstyle y'_{i,k,l}}&
 y'_{i,l}&  x_{j,l}^*y'_{i,j}\circ y'_{j,l} \ar@{=>}[l]_(0.6){\textstyle y'_{i,j,l}}}
\end{equation}
commutes in the category $\!\f_{x_{\!l}}$; and, moreover, the following normalization equations hold: $$y'_{i,i}=1_{y'_i},
\ \ y'_{i,j,j}=1_{y'_{i,j}}=y'_{i,i,j}.$$
The face and degeneracy maps are given by $$d_i(\yp,\x)=(\yp\delta_i, \x\delta_i), \hspace{0.2cm}
s_i(\yp,\x)=(\y\sigma_i, \x\sigma_i),\hspace{0.3cm} \text{for }\ 0\leq i\leq p.$$

  0ur strategy now is to apply the weak homotopy equivalences (\ref{eta}) on the bisimplicial set  $S$,
defined in $(\ref{s})$. Since $\diag S= \text{hocolim}_\C\gner \f$, we have a weak homotopy equivalence $$ \eta:
\text{hocolim}_\C\gner \f\to \overline{W}S$$ and the proof will be complete once we show a simplicial isomorphism
$$\xymatrix{\overline{W}S\cong \gner\!\int_\C\!\f.}$$

For, note that a $p$-simplex of $\overline{W} S$, say $\chi$, can be described as a list of pairs
$$
\chi=\big((\y^{0},\x^{0}), \dots, (\y^{m},\x^{m}),\dots,(\y^{p},\x^{p})\big),
$$
in which each $\x^{m}:[p-m]\to \C$ is a functor and each
$\y^{m}:[m]\rightsquigarrow\!\f_{x^{m}_0}$ is a normal lax functor,
such that $\x^{m}\delta_0=\x^{m+1}$ and $\y^{m+1}\delta_{m+1}={x_{0,1}^{m\,*}}\y^{m}$,  for all $0\leq m<p$.
Denoting $\x^{0}:[p]\to \C$ simply by $\x:[p]\to \C$, an iterated use of the above equalities proves
that
$$
\x^{m}=\x(\delta_0)^m:[p-m]\overset{(\delta_0)^m}\to [p]\overset{\x}\to \C,
$$
for $0\leq m\leq p$, and
$$
\y^{m}\delta_{m}\cdots\delta_{k+1}=x_{k,m}^{\ *}\y^{k}:[k]\rightsquigarrow \!\f_{x_{m}},$$ for $0\leq k< m\leq p$.
These latter equations mean that  $$ \left\{\begin{array}{ll}y^{j}_i=x_{i,j}^*\,y^{i}_i& \text{for } \ i\leq j,\\[4pt]
y^{k}_{i,j}=x_{j,k}^*\,y^{j}_{i,j}& \text{for } \ i\leq j\leq k,\\[4pt]
y^{l}_{i,j,k}=x_{k,l}^*\ y^{k}_{i,j,k}& \text{for }\  i\leq j\leq k\leq l,
\end{array}\right.
$$ whence we see how the $p$-simplex $\chi$ of $\overline{W} S$ is uniquely determined by $\x:[p]\to\C$, the objects
$y^{i}_i$ of $\f_{x_i}$,  the morphisms $y^{j}_{i,j}:y^{j}_j\to y^{j}_i=x_{i,j}^*y^{i}_i$ of $\f_{x_j}$ and
the deformations $y^{k}_{i,j,k}:y^{k}_{i,j}\circ y^{k}_{j,k}=x_{j,k}^*y^{j}_{i,j}\circ y^{k}_{j,k}\Rightarrow
y^{k}_{i,k}$ in $\f_{x_k}$, all for $0\leq i\leq j\leq k\leq p$. At this point, we observe that there is a normal
$\x$-crossed lax functor
$\yp=\{y'_i,y'_{i,j},y'_{i,j,k}\}:[p]\rightsquigarrow \!\f$, as in $(\ref{xlf})$, defined just
by stating  $y'_i=y^{i}_i$, $y'_{i,j}=y^{j}_{i,j}$ and $y'_{i,j,k}=y^{k}_{i,j,k}$ (the commutativity of diagrams
$(\ref{ud})$ follows from $\y^{l}$ being a lax functor). Thus, the $p$-simplex $\chi\in \overline{W}S$ defines the
$p$-simplex  $(\yp,\x)$ of $\gner\!\int_\C\!\f$, which itself uniquely determines  $\chi$.
In this way, we obtain an injective simplicial map $$\xymatrix{j:\overline{W}S\to  \gner\!\int_\C\!\f}$$
$$\big((\y^{0},\x^{0}), \dots, (\y^{p},\x^{p})\big)\overset{\textstyle j}\mapsto (\yp,\x)=
(\{y^{i}_i,y^{j}_{i,j},y^{k}_{i,j,k}\}, \x^{0}),$$ which is also surjective, that is, actually an isomorphism,
as we can see by retracing our steps:
 To any pair $(\yp,\x)$ describing a $p$-simplex of $\gner\int_\C\!\f$, that is, with $\x=\{x_i,x_{i,j}\}:[p]\to \C$ a functor and $\yp=\{y'_i,y'_{i,j},y'_{i,j,k}\}
 :[p]\rightsquigarrow\!\f$ a normal $\x$-crossed lax
 functor, we associate the $p$-simplex
 $\chi=\big((\y^{m},\x^{m})\big)$ of $\overline{W}S$, where, for each $0\leq m\leq $,
 $\x^{m}:[p-m]\to \C$ is the
 composite $[p-m]\overset{(\delta_0)^{m}}\to [p] \overset{\x}\to\C$,
and the normal lax functor $\y^{m}:[m]\rightsquigarrow \!\f_{\!x^{m}_0}=\!\f_{x_m}$ is defined by the objects
$y^{m}_i=x_{i,m}^*y'_i$, the morphisms $y^{m}_{i,j}=x_{j,m}^*y'_{i,j}:y^{m}_{j}\to y^{m}_{i}$ and the
deformations $y^{m}_{i,j,k}=x_{k,m}^*y'_{i,j,k}:y^{m}_{i,j}\circ y^{m}_{j,k}\Rightarrow y^{m}_{i,k}$. Since one
easily checks that $j(\chi)=(\yp,\x)$, the proof is complete. \qed
\end{proof}

\begin{remark} {\em If $\f:\C^{o}\to \cat$ is any  $2$-functor from a $2$-category $\C$ such that any morphism  $z_1\to z_0$ in $\C$ induces a homotopy equivalence
$\class \f_{z_0}\to \class \f_{z_1}$,  then it is a consequence of Theorems \ref{hc2} and \ref{hct} that, for each object $z$ of $\C$,  there is a homotopy cartesian induced square
$$
\xymatrix{\class\f_z\ar[r]\ar[d]&\class \mbox{hocolim}_\C\!\f\ar[d]\\ pt \ar[r]^{\textstyle z}&\class \C.}
$$
This is a fact that, alternatively, can be obtained from a general result on simplicial categories acting on simplicial sets by Moerdijk \cite[Theorem 2.1]{moer}. The analogous result for a functor $\!\f:\C^{o}\to \dcat$, where $\C$ is a category, can also be obtained directly from Quillen's Lemma \cite[p. 90]{quillen}, \cite[\S IV, Lemma 5.7]{g-j}.}\qed
\end{remark}

\begin{example}{\em
Let $(\m,\otimes)$ be a strict monoidal category. If we regard the monoidal category
 as a $2$-category with only one object, say $1$, then we can identify a
$2$-functor $${\mathcal{N}:(\m,\otimes)^{o}\to \cat}$$ with a category $\mathcal{N}$ ($=\mathcal{N}1$, the one associated to
the unique object of the $2$-category) endowed with an associative and unitary right action of $\m$  by a functor
$\otimes :\mathcal{N}\times \m\to \mathcal{N}$; namely, that given by $$(a\overset{\textstyle f}\to b)\otimes (u\overset{\textstyle \alpha}\to v)=
(u^*a\overset{\textstyle \alpha^*_b\circ u^*\!f}\longrightarrow v^*b).$$

Since there is an identification of simplicial categories
$$\mbox{hocolim}_{(\m,\otimes)}\mathcal{N}= E_{(\m,\otimes)}\mathcal{N},$$ where
$E_{(\m,\otimes)}\mathcal{N}:\gner^{\!o}\to\cat$, $[n]\mapsto \mathcal{N}\times \m^n$, is the simplicial category obtained by the so-called Borel construction (or bar
construction) for the action, it follows from Theorem \ref{hct} that $\int_{(\m,\otimes)}\mathcal{N}$ is a $2$-category modelling the homotopy type of
the Borel simplicial category $ E_{(\m,\otimes)}\mathcal{N}$, that is, there is a homotopy equivalence
$$\xymatrix{\class \int_{(\m,\otimes)}\mathcal{N}\simeq \class E_{(\m,\otimes)}\mathcal{N}.}$$

This $2$-category $\int_{(\m,\otimes)}\mathcal{N}$ has the following easy description: its objects are the same as
$\mathcal{N}$. A morphism $(f,u):a\to b$ in $\int_{(\m,\otimes)}\mathcal{N}$ is a pair $(f,u)$ with $u$ an object of $\m$
and $f:a\to b\otimes u$ a morphism in $\mathcal{N}$, and a deformation $\xymatrix @C=6pt {a  \ar@/^0.7pc/[rr]^{\textstyle (f,u)} \ar@/_0.7pc/[rr]_{\textstyle (g,v)} & {\Downarrow\!\alpha} &b }$ is a
morphism $\alpha:u\to v$ in $\m$ such that the following triangle commutes
$$
\xymatrix@C=12pt@R=16pt{&a\ar[rd]^{\textstyle g}\ar[ld]_{\textstyle f}&\\b\otimes u\ar[rr]^{\textstyle 1\otimes \alpha}&&b\otimes v.}
$$
The compositions in $\int_{(\m,\otimes)}\mathcal{N}$ are given in an obvious manner.

Many of the homotopy-theoretical properties of the classifying space of a monoidal category, $\class(\m,\otimes)$, can now be
more easily reviewed by using Grothendieck $2$-categories $\int_{(\m,\otimes)}\mathcal{N}$ instead of the Borel
simplicial categories $E_{(\m,\otimes)}\mathcal{N}$.

Thus, one sees, for example, that if the action  is
such that multiplication by each object $u$ of $\m$, $a\mapsto a\otimes u$, induces a homotopy equivalence $\class
\mathcal{N}\overset{\sim}\to\class \mathcal{N}$, then, by Theorem \ref{hc2}, $\class \mathcal{N}$ is homotopy
equivalent to the homotopy fibre of the map ${\class \int_{(\m,\otimes)}\mathcal{N}\to \class (\m,\otimes)}$ (cf.
\cite[Proposition 3.5]{jardine}); that is, one has a homotopy fibre sequence
$$\xymatrix{\class \mathcal{N}\to \class \!\int_{(\m,\otimes)}\mathcal{N}\to \class (\m,\otimes).}$$

In particular, the right action of $(\m,\otimes)$ on the underlying category $\m$ leads to the
$2$-category $\int_{(\m,\otimes)}\mathcal{M}= \comma{1}(\m^{\!o},\otimes)$, the comma $2$-category whose classifying space
is contractible by Lemma \ref{cont} (cf. \cite[Proposition 3.8]{jardine}). Then, it follows the well-known fact that there is
a homotopy equivalence $$\class \m\simeq \Omega\class(\m,\otimes),$$ between the classifying space of the underlying
category and the loop space of the monoidal category, whenever multiplication for each object $u\in \m$, $v\mapsto v\otimes u$, induces a
homotopy autoequivalence on $\class\m $.
}\end{example}

\begin{example} {\em  Let us recall that the category of simplices  of a simplicial set $S:\Delta^{\!o}\to \set$, $\int_\Delta {\hspace{-3pt}}S$, has as objects pairs
$(x,m)$ where $m\geq 0$ and $x$ is a $m$-simplex of $S$; and arrow $\xi\!:(x,m)\rightarrow (y,n)$ is an arrow
$\xi\!:[m]\to [n]$ in $\Delta$ with the property $x=\xi^*y$. It is a well-known result, due to Illusie \cite[Theorem 3.3]{illusie}, that there exists a homotopy equivalence $|S|\simeq
\class\!\int_\Delta {\hspace{-2pt}}S$ between the geometric realization of $S$ and the classifying space of $\int_\Delta{\hspace{-3pt}}S$. This result is,
in fact, a very particular case of the homotopy colimit theorem of Thomason \cite{thomason}): If $\C:\Delta^{\!o}\to \cat$ is any simplicial category, then there is a homotopy equivalence $\class \C\simeq \class\!\int_\Delta\!\C$, where $\int_\Delta\!\C$ is the category Grothendieck construction on $\C$.

Now, from Theorem \ref{hct} (ii), the homotopy type of any given simplicial $2$-category $\C:\Delta^{\!o}\to \dcat$, $[n]\mapsto \C_n$, is the same as the homotopy type of the $2$-category $\int_\Delta\!\C$, that is, $\class \C\simeq \class\!\int_\Delta\!\C$. To describe this $2$-category, note that its set of objects  is
$$ \xymatrix{\text{Ob}\!\int_\Delta\!\C}=\bigsqcup_{n\geq 0}\text{Ob}\C_n,$$ and its hom-categories are $$\xymatrix{\int_\Delta\!\C}\!\big((x,m),(y,n)\big)=\bigsqcup_{[m]\overset{\xi}\to[n]}
\C_m(x,\xi^*y),$$
where the disjoint union is taken over all maps $[m]\to [n]$ in $\Delta$.

For instance, if $\C$ is any $3$-category, that is, a $\dcat$-enriched category, then $\ner \C$ is a simplicial $2$-category whose classifying space is the classifying space of the $3$-category. Therefore, we have homotopy equivalences $$\xymatrix{\class\C\simeq\class \int_\Delta\!\ner\C\simeq \class \int_\Delta\!\ner(\int_\Delta\!\ner\C)}.$$}\qed
\end{example}

\section*{Acknowledgement}
\begin{enumerate}
\item The author would like to express his gratitude to the referee, whose comments greatly improved this paper.
\item Partially supported by DGI of Spain and FEDER (Project: MTM2007-65431); Consejer\'{i}a de Innovacion
de J. de Andaluc\'{i}a (P06-FQM-1889); MEC de Espa\~{n}a, `Ingenio Mathematica(i-Math)' No. CSD2006-00032
(consolider-Ingenio 2010).
\end{enumerate}


\begin{thebibliography}{99}

\bibitem{artin-mazur} M. Artin, B. Mazur,   On the Van Kampen Theorem, Topology 5, 179-189 (1966).

\bibitem{baez} {J.C. Baez, A.D. Lauda,} Higher-Dimensional Algebra V:
2-Groups, Theory Appl. Categ. 12, 423-491 (2004).


\bibitem{bak} I. Bakovi\'{c}, Grothendieck construction for bicategories, preprint available at http://www.irb.hr/korisnici/ibakovic/sgc.pdf


\bibitem{borceux} F. Borceux, Handbook of Categorical Algebra 1. Basic category theory,  Encyclopedia of Mathematics and its Applications 50, Cambridge University Press, Cambridge (1994).

\bibitem{bousfield-kan} A. K. Bousfield, D. M. Kan,  Homotopy limits, completions and localizations, Lecture Notes in Math. 304, Springer-Verlag, Berlin-New York (1972).

\bibitem{b-s} {R. Brown, C.B. Spencer,} G-groupoids, crossed modules and the fundamental groupoid of a topological group, Proc. Kon. Nederl. Acad. Wetensch. 79, 296-302 (1976).

\bibitem{b-c} M. Bullejos, A. M. Cegarra,  On the geometry of 2-categories and their classifying spaces, $K$-Theory 29, no. 3, 211-229 (2003).


\bibitem{cordier} J.-M. Cordier, Sur les limites homotopiques de diagrammes homotopiquement coh\'{e}rents,
Compositio Math.  62, 367-388 (1987).


\bibitem{duskin} J. W. Duskin, Simplicial matrices and the nerves of weak $n$-categories. I. Nerves of bicategories. CT2000 Conference (Como). Theory Appl. Categ. 9, 198-308 (2001/02).

\bibitem{c-c-g} P. Carrasco, A. M. Cegarra, A. R. Garz\'on, Classifying spaces for braided monoidal categories and lax diagrams of bicategories, to appear in Adv. Math.,  available at http://arxiv.org/abs/0907.0930v1.

\bibitem{c-g-o} A. M. Cegarra, J. M.  Garc\'{i}a-Calcines, J. M.; Ortega, On graded categorical groups and equivariant group extensions. Canad. J. Math. 54, 970-997 (2002).

\bibitem{cegarra1} A. M. Cegarra, J. Remedios, The relationship between the diagonal and the bar constructions on a bisimplicial set. Topology Appl. 153, no. 1, 21-51 (2005).


\bibitem{cegarra2} A. M. Cegarra,  J. Remedios, The behaviour of the $\overline W$-construction on the homotopy theory of bisimplicial sets. Manuscripta Math. 124, no. 4, 427-457 (2007).

\bibitem{cegarra3} A. M. Cegarra, E. Khadmaladze, Homotopy classification of graded Picard categories. Adv. Math. 213, no. 2, 644-686 (2007).

\bibitem{fiedorowicz}  Z. Fiedorowicz,  Classifying spaces of topological monoids and categories,  Amer. J. Math. (2) 106,  301-350 (1984).

\bibitem{g-j} P. G. Goerss, J. F. Jardine, Simplicial homotopy theory. Progress in Mathematics 174, Birkh\"{a}user Verlag, Basel (1999).

\bibitem{g-p-s} R. Gordon, A. J.  Power, R. Street,  Coherence for tricategories, Mem. Amer. Math. Soc. 117 (558) (1995).

\bibitem{gray} J. W. Gray, Closed categories, lax limits and homotopy limits, J. Pure Appl. Algebra  19, 127-158 (1980).

\bibitem{groth} A. Grothendieck, Techniques de construction et th\'{e}or\`{e}mes d'existence en g\'{e}eom\'{e}trie alg\'{e}rique. III. Pr\'{e}schemas quotients, S\'{e}minaire Bourbaki, 13e ann\'{e}e, 1960/61, no 212, F\'{e}vrier 1961.

\bibitem{grothendieck} A. Grothendieck, Cat\'{e}gories fibr\'{e}es et d\'{e}scente, SGA I,
expos\'{e} VI, Lecture Notes in Math. 224, 145-194,  Springer, Berlin (1971).

\bibitem{hinich} V. A. Hinich, V. V. Schechtman: Geometry of a category of complexes and algebraic
K-theory, Duke Math. J. 52, 339-430 (1985).

\bibitem{illusie} L. Illusie: Complexe cotangent et d\'{e}formations II,  Lecture Notes in Math.  283, Springer-Verlag (1972).

\bibitem{jardine} J. F. Jardine,  Supercoherence,   J. Pure Appl. Algebra 75, 103-194 (1991).


\bibitem{quillen67} D. Quillen, Homotopical Algebra, Lecture Notes in Math. 43, Springer-Verlag (1967).

\bibitem{quillen} D. Quillen, Higher algebraic K-theory:I, in {\em Algebraic K-theory, I: Higher K-theories}, Lecture
Notes in Math. 341, 85-147, Springer-Verlag  (1973).

\bibitem{macd} D. McDuff, On the classifying spaces of discrete monoids, Topology  18, 313-320 (1979).


\bibitem{maclane} S. Mac Lane,  Categories for the working mathematician, GTM {\bf 5} 2nd Edition,
Springer (1998).

\bibitem{mac} S. Mac Lane and J.H.C. Whitehead,  On the 3-type of a
complex,  Proc. Nat. Acac. Sci. USA 30, 41-48 (1956).


\bibitem{may67} J. P. May, {\em Simplicial objects in algebraic topology},  viii+161 pp., Reprint of the 1967 original. Chicago Lectures in Mathematics. University of Chicago Press, Chicago, IL (1992).

\bibitem{milnor} J. Milnor, The geometric realization of a semi-simplicial complex,
Ann. of Math. (2) 65 (1957),  357-362.

\bibitem{moer} I. Moerdijk, Bisimplicial sets and the group-completion theorem. In {\em  Algebraic
K-theory: connections with geometry and topology (Lake Louise,
AB, 1987)}, 225-240, Kluwer Acad. Publ., Dordrecht (1989).

\bibitem{moerdijk-svensson} I. Moerdijk, J. A. Svensson, Algebraic classification of equivariant
homotopy 2-types, I, J. Pure Appl. Algebra 89, 187-216 (1993).

\bibitem{segal68} G. B. Segal, Classifying spaces and spectral sequences, Publ. Math. Inst. des
Hautes Etudes Scient. (Paris) 34, 105-112 (1968).

\bibitem{street} R. Street, The algebra of oriented simplices,  J. Pure Appl. Algebra   49, no. 3, 283-335 (1987).

\bibitem{street2} R. Street, Categorical structures. Handbook of algebra, Vol. 1, 529-577, North-Holland, Amsterdam (1996).

\bibitem{tama} D. Tamaki, The Grothendieck Construction and Gradings for
Enriched Categories, preprint available at http://arxiv.org/abs/0907.0061v1.

\bibitem{thomason} R. W. Thomason, Homotopy colimits in the category of small categories,
Math. Proc. Cambridge Philos. Soc. 85, no. 1, 91-109 (1979).


\bibitem{til}  U. Tillmann,  On the homotopy of the stable mapping class group. Invent.
Math.  130, no. 2, 257-275 (1997).

\bibitem{til2} U.  Tillmann, Discrete models for the category of Riemann surfaces. Math. Proc. Cambridge Philos. Soc. 121, no. 1, 39-49 (1997).

\bibitem{whi} {J.H.C.  Whitehead,} Combinatorial homotopy II, Bull. A.M.S. 55,  496-543 (1949).


\bibitem{tonks} K. Worytkiewicz, K. Hess, P. E.  Parent, A. Tonks, A model structure \`{a} la Thomason on 2-Cat, Journal of Pure and Applied Algebra 208, no. 1, 205-236 (2007).

\end{thebibliography}
\end{document}